\documentclass{article}

\usepackage[dvipdfmx]{graphicx}
\usepackage[usenames]{color}
\usepackage{bm}
\usepackage{amsmath,amssymb,amsthm}
\usepackage{tabularx}
\usepackage{lscape}
\usepackage{url}
\usepackage[normalem]{ulem}
\usepackage{microtype}
\usepackage{subfigure}
\usepackage{booktabs} % for professional tables
\usepackage{enumerate}
\usepackage{a4wide}
\usepackage[bottom=30truemm]{geometry}
\usepackage{algorithm,algorithmic}
\usepackage[numbers]{natbib}
\usepackage{mathtools}

\mathtoolsset{showonlyrefs,showmanualtags}
\usepackage{amsfonts}

\newcommand{\argmin}{\mathop{\rm argmin}\limits}

\newtheorem{theorem}{Theorem}
\newtheorem{corollary}{Corollary}
\newtheorem{lemma}{Lemma}
\newtheorem{definition}{Definition}
\newtheorem{remark}{Remark}
\newtheorem{example}{Example}
\newtheorem{proposition}{Proposition}

\title{Pursuit of the Cluster Structure of Network Lasso:\\
Recovery Condition and Non-convex Extension}
\author{Shotaro Yagishita\footnote{Department of Industrial and Systems Engineering, Chuo University, Japan ({a15.fjng@g.chuo-u.ac.jp}, {jgoto@indsys.chuo-u.ac.jp}).}
\and Jun-ya Gotoh\footnotemark[2]}

\begin{document}
\maketitle
\begin{abstract}
Network Lasso (NL for short) is a methodology for estimating models by simultaneously clustering data samples and fitting the models to the samples. 
It often succeeds in forming clusters thanks to the geometry of the $\ell_1$-regularizer employed therein, but there might be limitations because of the convexity of the regularizer. This paper focuses on the cluster structure that NL yields and reinforces it by developing a non-convex extension, which we call Network Trimmed Lasso (NTL for short). Specifically, we first study a sufficient condition that guarantees the recovery of the latent cluster structure of NL on the basis of the result of Sun et al.~(2021) for Convex Clustering, which is a special case of NL for clustering. Second, we extend NL to NTL to incorporate a cardinality (or, $\ell_0$-)constraint and rewrite the constrained optimization problem defined with the $\ell_0$ norm, a discontinuous function, into an equivalent unconstrained continuous optimization problem. We develop ADMM algorithms to solve NTL and provide its convergence results. Numerical illustrations demonstrate that the non-convex extension provides a more clear-cut cluster structure when NL fails to form clusters without incorporating prior knowledge of the associated parameters.
\end{abstract}

\section{Introduction}
In data analysis, fundamental methodologies such as regression and clustering can be enhanced by coupling with side information about the underlying structure of the data set. For example, consider a situation where a batch of data samples consists of outcomes from multiple sources and the analyst knows their (possibly, partial) relationship. In such situations, we can estimate multiple models to fit to the dataset while detecting clusters of the samples. To accomplish such tasks, \citet{hallac2015network} recently have proposed \emph{Network Lasso} (NL for short). 

Let $a_i\in\mathbb{R}^p$ and $b_i\in\mathbb{R}$ denote $p$ inputs and a real-valued output, respectively, of the $i$-th sample, $i\in[n]:=\{1,...,n\}$, and assume that the analyst knows that some of them are similar. If such similarity for $i,j\in[n]$ is given by non-negative weights, $\tilde{w}_{\{i,j\}}$, its NL version of the ordinary least squares regression is cast as the following convex optimization:
\begin{align}
\underset{x_1,...,x_n\in\mathbb{R}^p}{\text{minimize}}&\quad
\frac{1}{2}\sum_{i\in[n]}(b_i-a_i^\top x_i)^2+\gamma\sum_{i,j\in[n]:i<j}\tilde{w}_{\{i,j\}}\|x_i-x_j\|_2,
\label{eq:nw_regress}
\end{align}
where $\|z\|_2:=\sqrt{\sum_{j=1}^{p}z_j^2}$ denotes the $\ell_2$ norm of a vector $z\in\mathbb{R}^p$, and $\gamma>0$ is a parameter to be tuned so as to strike a balance between the first and second terms of \eqref{eq:nw_regress}. 
Intuitively, reducing the first term prompts a model ($b=a^\top x_i$) to fit each sample $(a_i,b_i)$, while reducing the second term expedites the mergers of similar samples since the weight $\tilde{w}_{\{i,j\}}$ being large means that the samples $i$ and $j$ are similar to each other.
Especially, the second term is the sum of $\ell_2$ norms and, for large $\gamma$, an optimal solution $(x^\star_1,...,x^\star_n)$ is expected to satisfy $\|x^\star_i-x^\star_j\|_2=0$ for many pairs $\{i,j\}\in\mathcal{E}$. 
This contraction property is parallel to the group Lasso \citep{yuan2006model}.

In general, let us introduce a weighted undirected graph $\mathcal{G}=(\mathcal{V},\mathcal{E},W)$, where the vertex set $\mathcal{V}=[n]$ denotes the index set of samples, the edge set $\mathcal{E}\subset\{\{i,j\}: i,j\in[n];\,i\neq j\}$  indicates the pairwise adjacency or similarity, and $W=(w_{\{i,j\}})_{\{i,j\}\in\mathcal{E}}\in\mathbb{R}^{\left|\mathcal{E}\right|}_{\geq0}$ denotes non-negative weights on all the edges to represent the pairwise similarity. (The more $w_{\{i,j\}}$, the closer the vertices $i$ and $j$.) 
 Let $f_{i}:\mathbb{R}^{p}\rightarrow\mathbb{R}$ be a loss function for sample $i\in[n]$. 
NL \citep{hallac2015network} is then formulated as the following optimization problem:
\begin{align}
\underset{x_1,\ldots,x_n\in\mathbb{R}^{p}}{\text{minimize}}
 &\quad \sum_{i\in\mathcal{V}}f_i(x_i)+\gamma\sum_{\{i,j\}\in\mathcal{E}}w_{\{i,j\}}\|x_i-x_j\|_{2}.
\label{nlasso}
\end{align}
Obviously, \eqref{eq:nw_regress} is an example of NL \eqref{nlasso}, where the sum of squared residuals, $f_i(x_i)=\frac{1}{2}(b_i-a_i^\top x_i)^2$, $i\in[n]$, are employed as the loss functions. 
NL includes other methods as special cases.
If only the input vectors $a_i\in\mathbb{R}^p, i\in[n]$, are given and we employ
\begin{equation}
f_i(x_i)=\frac{1}{2}\|x_i-a_i\|_2^2,
\label{convexclustering}
\end{equation}
and set $\mathcal{E}=\{\{i,j\}: i,j\in[n];\,i\neq j\}$ and $w_{\{i,j\}}=1$, NL \eqref{nlasso} is reduced to \emph{Convex Clustering} (\citet{pelckmans2005convex}). \citet{lindsten2011clustering} and \citet{hocking2011clusterpath} consider extensions where $w_{\{i,j\}}$ are not necessarily equal to $1$. 
With an optimal solution $(x_1^*,...,x_n^*)$, nodes $i$ and $j$ are assigned to the same cluster if and only if $x_i^*=x_j^*$. We call $x_i^*$ the {\it centroid} of node $i$. Namely, samples that share the same centroid form a cluster. 
For sufficiently large $\gamma$, a clustering result of the data set $a_1,...,a_n$ is obtained thanks to the exact contraction property of the second term of \eqref{nlasso}. 
Recent works have reported that NL numerically performs well in various tasks when prior information about the similarity between nodes (i.e., $W$) are given appropriately (e.g., \citet{hallac2015network,jung2018when,hocking2011clusterpath,chi2015splitting,sun2021convex}).

In this paper, we further investigate and extend NL with a focus on its clustering property.
First, we study whether NL can recover true latent clusters.
For Convex Clustering, \citet{zhu2014convex,panahi2017clustering} and \citet{sun2021convex} provide sufficient conditions for recovering the set of the latent clusters.
For NL (not limited to Convex Clustering), although \citet{jung2018when} and \citet{jung2019localized} analyze the gap between the optimal solution of NL and true parameter values. However, their conditions do not guarantee the recovery of the true clusters. 
In contrast, we provide sufficient conditions to recover the latent clusters for NL.
The easiness of the recovery will be given by ranges of the parameter $\gamma$, for which NL \eqref{nlasso} recovers the (unseen) true clusters if they exist. 

Besides, it is almost certain that there is a gap between Convex Clustering and the usual $k$-means approach (or its non-convex optimization version). 
The second part of this paper is devoted to building a bridge between the two realms: convex vs. non-convex. 
We should emphasize here that the performance of \eqref{nlasso} highly depends on how the analyst puts the prior information of the weights $W$.
The left panel of Figure \ref{visual1} shows the regularization paths, i.e., the loci of the centroids obtained by Convex Clustering without prior information (i.e., $w_{(i,j)}=1$ for all $(i,j)\in\mathcal{E}$). 
While there are two latent clusters (red and blue), all the centroids shrink to the middle point in an equal manner and we cannot obtain the two clusters even with a big $\gamma$. 
For a practical use of Convex Clustering, it is often suggested to set the weights $w_{\{i,j\}}$, as $w_{\{i,j\}}=\exp(-\alpha\|a_i-a_j\|_2^2)$, where $\alpha>0$ is a parameter. 
The right panel of Figure \ref{visual1} demonstrates that, with this technique, Convex Clustering resulted in a clear-cut clustering. 
We should note that how to provide such prior information depends on tasks and there are no general tips for NL \eqref{nlasso} (e.g., for regression). 

To overcome such difficulty, we consider introducing a cardinality constraint in place of the group $l_2$-penalty in NL \eqref{nlasso}.
We show that the cardinality-constrained problem can be equivalently rewritten by a non-convex but continuous unconstrained optimization problem, which we call \emph{Network Trimmed Lasso} (NTL for short). 
This reformulation is parallel to that for the \emph{Trimmed Lasso}, which is studied by, for example,  \citet{gotoh2018dc,bertsimas2017trimmed}, and \citet{amir2020trimmed}. 
We also propose algorithms based on Alternating Direction Method of Multipliers (ADMM) to solve NTL. 
For a non-convex subproblem in the proposed algorithms, a closed-form solution is derived.
Also, we show the convergence of Proximal ADMM, which is an extension of ADMM, to a locally optimal solution of NTL, a non-convex optimization problem.
The advantages of NTL over ordinary NL are demonstrated through numerical experiments.

 \begin{figure}[H]
  \begin{tabular}{cc}
   \begin{minipage}[t]{0.45\hsize}
    \centering
    \includegraphics[width=1.0\columnwidth]{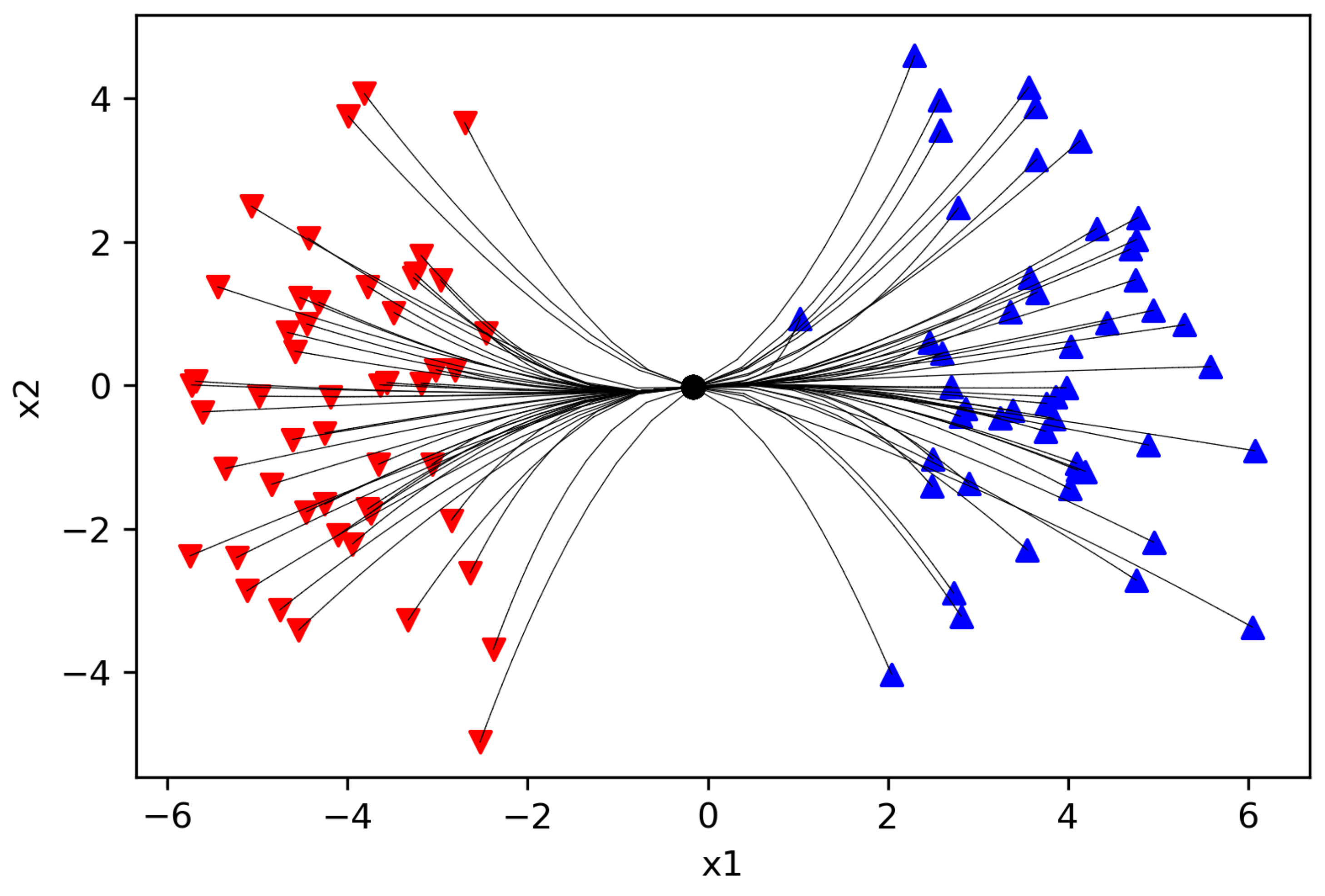}
   \end{minipage} &
   \begin{minipage}[t]{0.45\hsize}
    \centering
    \includegraphics[width=1.0\columnwidth]{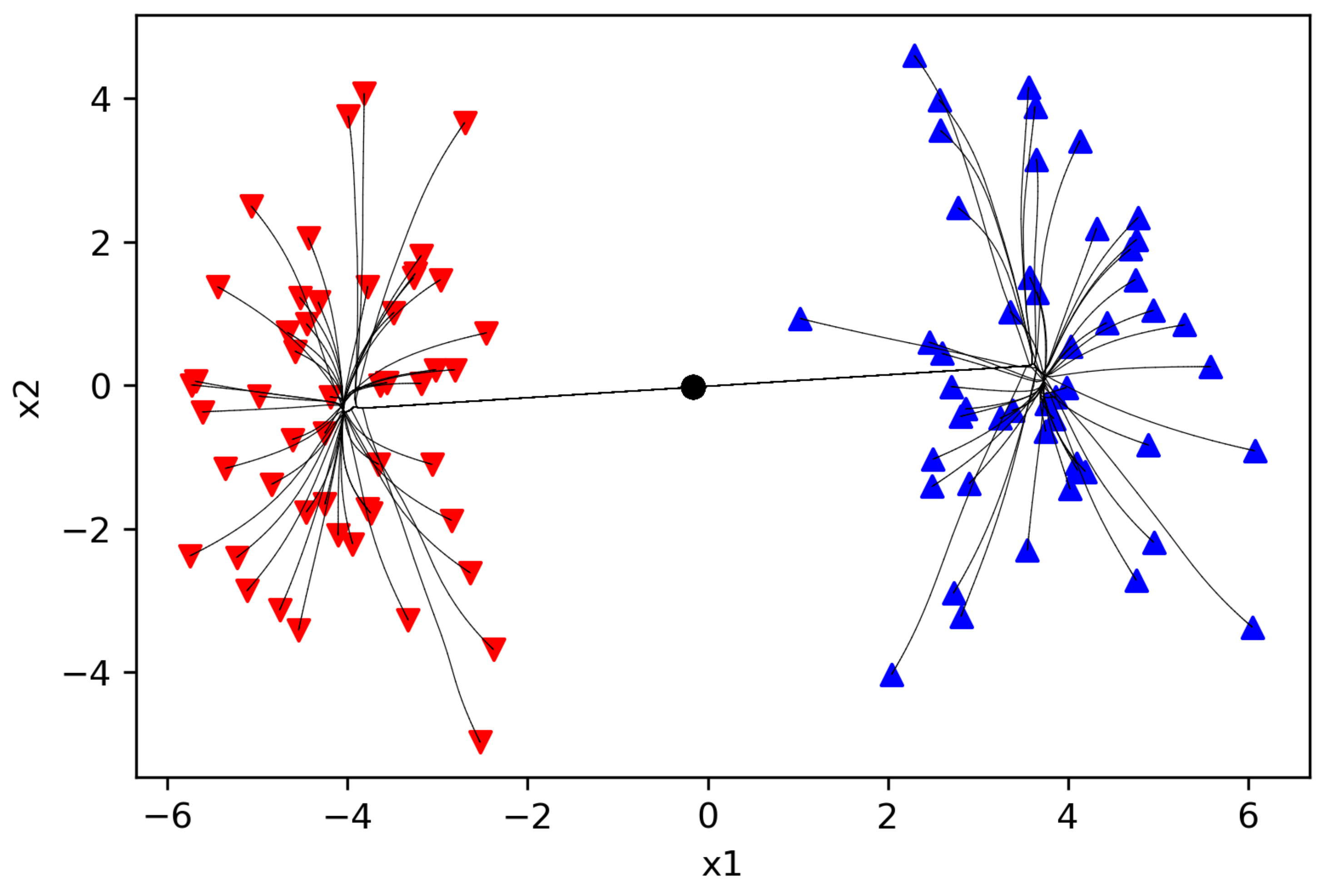}
   \end{minipage}\\
(a) Without prior information & (b) With prior information
  \end{tabular}
  \caption{Failed cluster path of centroids via Convex Clustering without prior information (left panel (a)) and successful cluster path with prior information, where $w_{\{i,j\}}=\mathrm{e}^{-0.1{\|a_i-a_j\|}_2^2}$ (right panel (b)).}
  \label{visual1}
\begin{flushleft}\scriptsize
As the regularization parameter $\gamma$ grows, all the centroids are reduced to the mean of $n$ points, i.e., $\frac{1}{n}\sum_{i\in\mathcal{V}}a_i$, which is located near the origin in each picture. The left-hand side panel shows the result of Convex Clustering with $w_{\{i,j\}}=1$, failing to form clusters even for large $\gamma$'s. On the other hand, the right-hand side panel shows the case where the distance of points is used succeeded in providing a clear-cut cluster structure even with small $\gamma$'s.
\end{flushleft}
 \end{figure}

Contributions of the paper are summarized as follows:
\begin{itemize}
\item We give a theoretical guarantee that the clustering based on NL works well when prior information is properly provided. Specifically, sufficient conditions with which NL can recover the latent cluster structure are given. 
\item We propose a framework of NTL to enhance the cluster structure of NL.
More specifically, we show the equivalence between the cardinality-constrained extension of NL and NTL. 
\item We present ADMM-based algorithms to solve a problem that has a general Trimmed Lasso penalty, and show that under mild conditions it converges to a local optimum. 
\end{itemize}

The rest of this paper is organized as follows.
The next section is devoted to show sufficient conditions that NL recovers a latent cluster structure. 
Section \ref{sec:NTL} introduces a cardinality-constrained version of NL to get a more distinguishing cluster structure than NL does. The equivalence between the cardinality-constrained problem and NTL is shown. 
In Section \ref{sec:algorithm}, we develop algorithms for NTL and provide its convergence results.
Section \ref{sec:numerical} reports numerical examples to demonstrate the effectiveness of NTL.
Finally, Section \ref{sec:conclusion} concludes the paper.

\paragraph{Notation and preliminaries}
A continuous differentiable function $f:\mathbb{R}^{p}\to\mathbb{R}$ is said to be \emph{$L$-smooth} if there exists $L>0$ such that
\begin{align}
\|\nabla f(x)-\nabla f(y)\|_2\leq L\|x-y\|_2
\end{align}
for any $x, y\in\mathbb{R}$.
If $f$ is $L$-smooth, then the inequality
\begin{align}
\label{inv_descent_lemma}
f(y)\leq f(x)+\nabla f(x)^\top (y-x)+\frac{L}{2}{\|x-y\|}_2^2
\end{align}
holds for all $x, y\in\mathbb{R}^{p}$, which implies $\frac{L}{2}\|\cdot\|_2^2-f$ is convex (see, e.g., \citep[Lemma 5.7]{beck2017first}).
For a differentiable convex function $f:\mathbb{R}^{p}\to\mathbb{R}$, the following statements are equivalent (\citep[Theorem 5.8]{beck2017first}):
\begin{itemize}
\item $f$ is $L$-smooth.
\item The inequality \eqref{inv_descent_lemma} holds for all $x, y\in\mathbb{R}^{p}$.
\item $\frac{L}{2}\|\cdot\|_2^2-f$ is convex.
\end{itemize}
For a convex function $f:\mathbb{R}^{p}\to\mathbb{R}$, the \emph{subdifferential} of $f$ at $x\in\mathbb{R}^p$ is defined by
\begin{align}
\partial f(x):=\{z\in\mathbb{R}^p\mid f(y)\ge f(x)+z^\top(y-x) \quad\forall y\in\mathbb{R}^p\}.
\end{align}
Note that $\partial f(x)\neq\emptyset$ (\citep[Theorem 3.14]{beck2017first}).
We call $f:\mathbb{R}^{p}\rightarrow\mathbb{R}$ \emph{strongly convex} with a positive constant $\alpha$ (or simply, \emph{$\alpha$-strongly convex}) if $f-\frac{\alpha}{2}\|\cdot\|_2^2$ is a convex function. 
If $f$ is $\alpha$-strongly convex, by using Theorems 3.63 and 5.24 of \citep{beck2017first} and the Cauchy-Schwarz inequality, we obtain
\begin{align}
\label{l_t_bound}
\|z\|_2\geq\alpha\|x-\overline{x}\|_2,
\end{align}
for all $x\in\mathbb{R}^p,z\in\partial f(x)$, where $\overline{x}\in\argmin_{x\in\mathbb{R}^p}f(x)$ and its existence and uniqueness are guaranteed by strong convexity of $f$ (\citep[Theorem 5.25]{beck2017first}).
It is also known (e.g., \citep[Theorem 5.25]{beck2017first}) that if $f$ is $\alpha$-strongly convex and $\overline{x}\in\argmin_{x\in\mathbb{R}^p}f(x)$, the inequality
\begin{align}
\label{plain_bound}
\frac{\alpha}{2}\|x-\overline{x}\|_2^2\le f(x)-f(\overline{x}),
\end{align}
holds for all $x\in\mathbb{R}^p$.
The \emph{directional derivative} of $f:\mathbb{R}^{p}\to\mathbb{R}$ at a point $x\in\mathbb{R}^p$ in the direction $v\in\mathbb{R}^p$ is defined by
\begin{align}
{\mathrm d}f(x;v):=\lim_{\eta\searrow 0}\frac{f(x+\eta v)-f(x)}{\eta}.
\end{align}
A point $x^*\in\mathbb{R}^p$ is called a \emph{(directional-)stationary point} of an optimization problem $\min\limits_{x} f(x)$ if the directional derivative ${\mathrm d}f(x^*;v)$ exists and is non-negative for any $v\in\mathbb{R}^p$.
The maximum and minimum eigenvalues of a symmetric matrix $A$ are denoted by $\lambda_{\max}(A)$ and $\lambda_{\min}(A)$, respectively.

\section{Recovery conditions for Network Lasso}
\label{sec:recovery}
In this section, we show the recovery conditions for NL \eqref{nlasso} to identify the latent cluster structure on the basis of \citet{sun2021convex}, which develops sufficient conditions for the recovery of the latent cluster structure for Convex Clustering. 

Let $C_1,...,C_N$ denote (unseen) $N$ clusters, which satisfy that $C_i\cap C_j=\emptyset$ if $i\neq j$, and $C_1\cup\cdots\cup C_N=\mathcal{V}$. 
We assume that each sample $i\in\mathcal{V}$ belongs to one of $C_1,...,C_N$. 

To define the recovery of the cluster structure, we introduce a couple of notions, as below, following \citep{sun2021convex}. 
\begin{definition}
Let $\mathcal{P}:=\{{C}_{1},\ldots,{C}_{N}\}$ and $\overline{\mathcal{P}}:=\{\overline{C}_{1},\ldots,\overline{C}_{M}\}$ be partitionings of $\mathcal{V}$.
  \begin{quote}
  \begin{enumerate}
   \item When $\overline{\mathcal{P}}=\mathcal{P}$, we say that $\overline{\mathcal{P}}$ perfectly recovers $\mathcal{P}$.
   \item We call $\overline{\mathcal{P}}$ a coarsening of $\mathcal{P}$ if for any $\overline{C}\in\overline{\mathcal{P}}$ there exists $I\subset\{1,\ldots,N\}$ such that $\overline{C}=\cup_{l\in I}{C}_{l}$. Moreover, $\overline{\mathcal{P}}$ is called the trivial coarsening if $\overline{\mathcal{P}}=\{\mathcal{V}\}$. Otherwise, it is called a non-trivial coarsening.
  \end{enumerate}
  \end{quote}
\end{definition}
A partitioning represents a cluster structure of the data set $\mathcal{V}$. 
In the remainder of this paper, we use $\mathcal{P}$ to refer to the partitioning corresponding to the true (usually, unseen) cluster structure. 
 For the partitioning $\mathcal{P}=\{{C}_{1},\ldots,{C}_{N}\}$, we introduce the following notation:
\begin{align*}
n_k        &:=|C_k|,                                        & k\in[N], & \quad\small\text{(size of Cluster }k\text{)}\\
w_i^{(k)}  &:=\sum_{j\in C_{k}}w_{\{i,j\}},                 & i\in\mathcal{V},k\in[N],& \quad\small\text{(sum of weights of Sample }i\text{ adjacent to Cluster }k\text{)}\\
w^{(k,k')} &:=\sum_{i\in C_k}\sum_{j\in C_{k'}}w_{\{i,j\}}, & k,k'\in[N].& \quad\small\text{(sum of weights between Clusters }k\text{ and }k'\text{)}
\end{align*}
For the sake of simplicity, we set $w_{\{i,j\}}=0$ for $\{i,j\}\notin\mathcal{E}$ in this section.

\begin{theorem}
\label{thm:clusterrecover}
Suppose that $f_i$ is strictly convex and $L_i$-smooth, $i\in\mathcal{V}$. 
Let $\mathcal{P}=\{{C}_{1},\ldots,{C}_{N}\}$ be the (unseen) true partitioning of $\mathcal{V}$, and let $f^{(k)}:=\sum_{i\in{C}_{k}}f_i,~k\in[N]$. 
Assume that for each $k\in[N]$, $f^{(k)}$ is $\alpha_k$-strongly convex, and let ${\overline{x}}^{(k)}=\argmin_{x\in\mathbb{R}^{p}}f^{(k)}(x)$. 
Suppose that ${\overline{x}}^{(k)}\neq {\overline{x}}^{(k')}$ for $k\neq k'$. Let %$n_k=|C_k|$,
\begin{align*}
\mu_{ij}^{(k)}&=\sum_{l\neq k}\left|w_i^{(l)}-w_j^{(l)}\right|+\frac{L_i+L_j}{\alpha_k}\sum_{l\neq k}w^{(k,l)}, &i,j\in C_k,k\in[N],
\end{align*}
and suppose that $n_kw_{\{i,j\}}>\mu_{ij}^{(k)}$ for all $i,j\in C_k,k\in[N]$ s.t. $i\neq j$. Let
\begin{align*}
\gamma_{\max}&:=\min_{k\neq k'}\left\{\frac{{\|{\overline{x}}^{(k)}-{\overline{x}}^{(k')}\|}_2}{\frac{1}{\alpha_k}\sum\limits_{l\neq k}w^{(k,l)}+\frac{1}{\alpha_{k'}}\sum\limits_{l\neq k'}w^{(k',l)}}\right\},\\
\gamma_{\min}&:=\max_{k}\max_{\substack{i,j\in {C}_{k}\\ i\neq j}}\left\{\frac{{\left\|\nabla f_j({\overline{x}}^{(k)})-\nabla f_i({\overline{x}}^{(k)})\right\|}_{2}}{n_kw_{\{i,j\}}-\mu_{ij}^{(k)}}\right\},
\end{align*}
where we set $\frac{a}{0}=\infty$ for $a>0$. 

Let $(x_1^*,...,x_n^*)$ be an optimal solution to \eqref{nlasso}, and $\overline{\mathcal{P}}$ be the quotient set of $\mathcal{V}$ by equivalence relation ${x}_{i}^{*}={x}_{j}^{*}$.
\begin{quote}
\begin{enumerate}
\item If $\gamma_{\min}\leq\gamma<\gamma_{\max}$, $\overline{\mathcal{P}}$ perfectly recovers $\mathcal{P}$.
\item If $\gamma_{\min}\leq\gamma<\max_{k}\frac{{\|\nabla f^{(k)}(\overline{x})\|}_2}{\sum\limits_{l\neq k}w^{(k,l)}}$, $\overline{\mathcal{P}}$ is a non-trivial coarsening of $\mathcal{P}$, where 
\begin{align*}
\overline{x}:=\argmin_{x\in\mathbb{R}^{p}}\sum\limits_{k\in[N]}f^{(k)}(x)=\argmin_{x\in\mathbb{R}^{p}}\sum\limits_{i\in\mathcal{V}}f_i(x).
\end{align*}
\end{enumerate}
\end{quote}
\end{theorem}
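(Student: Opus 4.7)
The plan is to follow the KKT/subdifferential strategy that \citet{sun2021convex} devised for Convex Clustering and adapt it to the general NL setting. I would begin by writing the optimality conditions for \eqref{nlasso}: a tuple $(x_1^*,\ldots,x_n^*)$ is optimal if and only if there exist antisymmetric edge multipliers $u_{ij}^*\in\mathbb{R}^{p}$ for $\{i,j\}\in\mathcal{E}$ with $u_{ij}^*=(x_i^*-x_j^*)/\|x_i^*-x_j^*\|_2$ when $x_i^*\neq x_j^*$ and $\|u_{ij}^*\|_2\le 1$ otherwise, satisfying
\begin{equation}
\nabla f_i(x_i^*)+\gamma\sum_{j:\{i,j\}\in\mathcal{E}} w_{\{i,j\}}\,u_{ij}^*=0,\qquad i\in\mathcal{V}.
\end{equation}

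For Part 1, I construct a candidate of the form $x_i^*=\tilde{x}^{(k)}$ for $i\in C_k$, where the cluster representatives $\tilde{x}^{(k)}$ jointly solve the reduced ($N$-variable) convex problem of minimizing $\sum_k f^{(k)}(y^{(k)})+\gamma\sum_{k<k'}w^{(k,k')}\|y^{(k)}-y^{(k')}\|_2$, whose optimality condition is the aggregated KKT system obtained by summing the above over $i\in C_k$ (within-cluster multipliers cancel by antisymmetry),
\begin{equation}
\nabla f^{(k)}(\tilde{x}^{(k)})+\gamma\sum_{l\neq k}w^{(k,l)}\,\frac{\tilde{x}^{(k)}-\tilde{x}^{(l)}}{\|\tilde{x}^{(k)}-\tilde{x}^{(l)}\|_2}=0.
\end{equation}
Applying inequality \eqref{l_t_bound} to the $\alpha_k$-strongly convex $f^{(k)}$ yields $\|\tilde{x}^{(k)}-\overline{x}^{(k)}\|_2\le(\gamma/\alpha_k)\sum_{l\neq k}w^{(k,l)}$, and a triangle-inequality argument then forces $\tilde{x}^{(k)}\neq\tilde{x}^{(k')}$ for $k\neq k'$ whenever $\gamma<\gamma_{\max}$, so the between-cluster directions in the candidate are well defined. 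The remaining task is to exhibit within-cluster antisymmetric multipliers $u_{ij}^*$ with $\|u_{ij}^*\|_2\le 1$ whose row sums match the individual KKT residuals $-\nabla f_i(\tilde{x}^{(k)})/\gamma$ minus each sample's between-cluster contribution. Following \citet{sun2021convex}, I would split these residuals symmetrically across within-cluster pairs, and then combine $L_i$-smoothness of $f_i$ (to bound $\|\nabla f_i(\tilde{x}^{(k)})-\nabla f_i(\overline{x}^{(k)})\|_2\le L_i\|\tilde{x}^{(k)}-\overline{x}^{(k)}\|_2$), the strong-convexity bound on $\|\tilde{x}^{(k)}-\overline{x}^{(k)}\|_2$ just derived, the hypothesis $n_kw_{\{i,j\}}>\mu_{ij}^{(k)}$, and the lower threshold $\gamma\ge\gamma_{\min}$ to verify $\|u_{ij}^*\|_2\le 1$.

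For Part 2, the same within-cluster construction shows that $\gamma\ge\gamma_{\min}$ forces $x_i^*=x_j^*$ whenever $i,j$ lie in the same true cluster, so $\overline{\mathcal{P}}$ must be a coarsening of $\mathcal{P}$; it only remains to rule out the trivial coarsening. At the candidate $x_i^*=\overline{x}$ for all $i$, summing the KKT condition over $i\in C_k$ (within-cluster multipliers cancel again) leaves
\begin{equation}
\nabla f^{(k)}(\overline{x})+\gamma\sum_{l\neq k}\sum_{i\in C_k,\,j\in C_l}w_{\{i,j\}}\,u_{ij}^*=0,
\end{equation}
whose feasibility with $\|u_{ij}^*\|_2\le 1$ requires $\|\nabla f^{(k)}(\overline{x})\|_2\le\gamma\sum_{l\neq k}w^{(k,l)}$ for \emph{every} $k$. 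Whenever $\gamma$ is strictly smaller than $\max_k\|\nabla f^{(k)}(\overline{x})\|_2/\sum_{l\neq k}w^{(k,l)}$, this bound is violated for some $k$, so the trivial solution is not optimal and $\overline{\mathcal{P}}$ is a non-trivial coarsening of $\mathcal{P}$.

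The hardest step I anticipate is the explicit construction and uniform norm control of the within-cluster antisymmetric multipliers $u_{ij}^*$: unlike the quadratic-loss case of \citet{sun2021convex}, where $\nabla f_i$ is affine and the residual decomposes cleanly, here one must handle arbitrary strictly convex $L_i$-smooth $f_i$ and absorb the resulting gradient perturbations through the $L_i/\alpha_k$ factor built into $\mu_{ij}^{(k)}$. This is where the precise form of the threshold $\gamma_{\min}$ and the strict inequality $n_kw_{\{i,j\}}>\mu_{ij}^{(k)}$ become essential.
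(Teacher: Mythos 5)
Your proposal follows essentially the same route as the paper's proof: reduce to the $N$-variable problem over cluster representatives, use the aggregated optimality condition plus $\alpha_k$-strong convexity to keep the representatives separated for $\gamma<\gamma_{\max}$, construct antisymmetric within-cluster dual certificates by symmetrically splitting the residuals (the paper's explicit choice is $z_{ij}^*=\tfrac{1}{n_kw_{\{i,j\}}}\{\tfrac{1}{\gamma}(\nabla f_j-\nabla f_i)+p_j^{(k)}-p_i^{(k)}\}$, exactly the splitting you describe) and bound their norms via $L_i$-smoothness, $\mu_{ij}^{(k)}$, and $\gamma\ge\gamma_{\min}$, then rule out the trivial coarsening through the aggregated gradient bound at $\overline{x}$. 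The step you flag as hardest is indeed where the paper spends its effort, and your identification of the needed ingredients is accurate, so the plan is sound.
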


Before proving the theorem, let us notice a couple of remarks on its statement.

\begin{remark}
Theorem \ref{thm:clusterrecover} implies that if the weights $(w_{\{i,j\}})_{\{i,j\}\in\mathcal{E}}$ are chosen adequately, NL is guaranteed to return the true cluster structure $\{C_1,...,C_N\}$ at some point on the cluster path. 
To see this through an example, let us suppose that $\mathcal{G}$ is a complete graph $\mathcal{E}=\{\{i,j\}\mid i\neq j,~ i,j\in\mathcal{V}\}$, $\{f_i\}_{i\in\mathcal{V}}$ satisfies the assumption of Theorem \ref{thm:clusterrecover}, and consider the weights defined as
\begin{align}
w_{\{i,j\}}
\left\{
\begin{array}{ll}
\le w, &(i\in {C}_{k},\ j\in {C}_{k'},\ k\neq k'), \\
=1, &(i,j\in {C}_{k},\ i\neq j),
\end{array}
\right.
\end{align}
for a constant $w\in[0,1]$. 
Note that when $w$ is equal to $1$, we can say the weights have no information; 
as $w$ gets closer to $0$, the weights more reflect the true cluster structure. 
Observing that 
%We obtain
\begin{align}
\sum\limits_{l\neq k}w^{(k,l)}\le\sum\limits_{l\neq k}n_kn_lw\le n^2w\rightarrow0\quad(w\rightarrow0)
\end{align}
for all $k\in[N]$ and
\begin{align}
\sum_{l\neq k}\left|w_i^{(l)}-w_j^{(l)}\right|\le\sum_{l\neq k}n_lw\le nw\rightarrow0\quad(w\rightarrow0)
\end{align}
for all $i,j\in C_k,k\in[N]$, we have
\begin{align}
\gamma_{\max} &\rightarrow\infty,\\
\gamma_{\min} &\rightarrow\max_{k}\max_{\substack{i,j\in {C}_{k}\\ i\neq j}}\left\{\frac{{\left\|\nabla f_j({\overline{x}}^{(k)})-\nabla f_i({\overline{x}}^{(k)})\right\|}_{2}}{n_k}\right\},
\end{align}
as $w\rightarrow0$. This implies that for sufficiently small $w$ the %inequality $\gamma_{\min}<\gamma_{\max}$ holds. 
interval $[\gamma_{\min},\gamma_{\max})$ becomes wider, so that we can find a value of $\gamma$ in the range in an easier manner. 
This example indicates that if $(w_{\{i,j\}})_{\{i,j\}\in\mathcal{E}}$ are given so that they reflect the true cluster structure sufficiently, NL returns the true clusters with some $\gamma\in[\gamma_{\min},\gamma_{\max})$, namely, at some point on the cluster path, as demonstrated in Figure \ref{visual1}.
\end{remark}

\begin{remark}
Since $f_i(x_i)=\frac{1}{2}(b_i-a_i^\top x_i)^2$ is not strictly convex for $p\ge2$, Theorem \ref{thm:clusterrecover} does not apply to optimization problem \eqref{eq:nw_regress}. For example, it can be applied if $\frac{\varepsilon}{2}\|x_i\|_2^2$ is added to $f_i(x_i)$ where $\varepsilon>0$.
\end{remark}

\begin{remark}
While our result covers the case where $f_i(x_i)=\frac{1}{2}\|x_i-a_i\|_2^2$ for all $i\in\mathcal{V}$, i.e., Convex Clustering, Theorem \ref{thm:clusterrecover} is slightly weaker than the result of \citet{sun2021convex} for Convex Clustering because of the generalization beyond Convex Clustering. In their result, the thresholds corresponding to $\gamma_{\max}$ and $\gamma_{\min}$, between which recovery of true clusters is guaranteed, are given, respectively, by
\begin{align}
\gamma_{\max}'&=\min_{k\neq k'}\left\{\frac{{\|a^{(k)}-a^{(k')}\|}_2}{\frac{1}{n_k}\sum\limits_{l\neq k}w^{(k,l)}+\frac{1}{n_{k'}}\sum\limits_{l\neq k'}w^{(k',l)}}\right\},\\
\gamma_{\min}'&=\max_{k}\max_{\substack{i,j\in {C}_{k}\\ i\neq j}}\left\{\frac{{\left\|a_i-a_j\right\|}_{2}}{n_kw_{\{i,j\}}-\sum_{l\neq k}\left|w_i^{(l)}-w_j^{(l)}\right|}\right\},
\end{align}
where $a^{(k)}=\frac{1}{n_k}\sum_{i\in C_k}a_i$.
Applying our result and from $\sum_{l\neq k}\left|w_i^{(l)}-w_j^{(l)}\right|\le\mu_{ij}^{(k)}$, we have
\begin{align}
\gamma_{\max}&=\min_{k\neq k'}\left\{\frac{{\|a^{(k)}-a^{(k')}\|}_2}{\frac{1}{n_k}\sum\limits_{l\neq k}w^{(k,l)}+\frac{1}{n_{k'}}\sum\limits_{l\neq k'}w^{(k',l)}}\right\}=\gamma_{\max}',\\
\gamma_{\min}&=\max_{k}\max_{\substack{i,j\in {C}_{k}\\ i\neq j}}\left\{\frac{{\left\|a_i-a_j\right\|}_{2}}{n_kw_{\{i,j\}}-\mu_{ij}^{(k)}}\right\}\ge\gamma_{\min}'.
\end{align}
This shows their result admits a wider interval $[\gamma_{\min}',\gamma_{\max}')\supset[\gamma_{\min},\gamma_{\max})$.
\end{remark}

\begin{proof}[Proof of Statement 1]
Let $(x^{(1)*},\ldots,x^{(N)*})$ be an optimal solution of the following problem,
\begin{align}
\underset{x^{(1)},\ldots,x^{(N)}}{\text{minimize}} &\quad \sum_{k=1}^N f^{(k)}(x^{(k)})+\gamma\sum_{k<l}w^{(k,l)}{\|x^{(k)}-x^{(l)}\|}_2,
\label{problem:centroid}
\end{align}
which is equivalent to NL \eqref{convexclustering} with the symbols introduced in the statement of the theorem. 

We first show that $\gamma<\gamma_{\max}$ implies $x^{(k)*}\neq x^{(k')*}$ for all $k\neq k'$.
The optimality condition of \eqref{problem:centroid} is then given by
\begin{align}
\nabla f^{(k)}(x^{(k)*})+\gamma\sum_{l\neq k}w^{(k,l)}z^{(k,l)}=0,\qquad k\in[N],
\label{eq:FOC:centroid}
\end{align}
where $z^{(k,k')}\in\partial{\|x^{(k)*}-x^{(k')*}\|}_2$ and $z^{(k,k')}=-z^{(k',k)}$, for any $k, k'\in[N]$ such that $k\neq k'$.
Here, $\partial\|x^{(k)*}-x^{(k')*}\|_2$ denotes the subdifferential of $\|\cdot\|_2$ at $x^{(k)*}-x^{(k')*}$, and the subdifferential of $\|\cdot\|_2$ at $x$ is given by
\begin{align}
\partial\|x\|_2=
\left\{
\begin{array}{ll}
\{\frac{x}{\|x\|_2}\}, &x\neq0, \\
\{z\in\mathbb{R}^p\mid\|z\|_2\le1\}, &x=0.
\end{array}
\right.
\end{align}
By noting that $\|z^{(k,k')}\|_2\le1$, and combining it with the triangle inequality and the equation \eqref{eq:FOC:centroid}, we obtain
\begin{align}
\begin{split}
\|\nabla f^{(k)}(x^{(k)*})\|_2 &\leq \gamma\sum_{l\neq k}w^{(k,l)}\|z^{(k,l)}\|_2 \\
                               &\leq \gamma\sum_{l\neq k}w^{(k,l)}
\label{gradient_bound}
\end{split}
\end{align}
for all $k\in[N]$.
Since $f^{(k)}$ is $\alpha_k$-strongly convex, we have for arbitrary $k, k'\in[N]$ such that $k\neq k'$, 
\begin{align}
\begin{split}
{\|{\overline{x}}^{(k)}-{\overline{x}}^{(k')}\|}_2 
 &\leq \|{\overline{x}}^{(k)}-x^{(k)*}\|_2+\|x^{(k)*}-x^{(k')*}\|_2+\|x^{(k')*}-{\overline{x}}^{(k')}\|_2 \\
 &\leq \|x^{(k)*}-x^{(k')*}\|_2+\frac{1}{\alpha_k}\|\nabla f^{(k)}(x^{(k)*})\|_2+\frac{1}{\alpha_{k'}}\|\nabla f^{(k')}(x^{(k')*})\|_2 \\
 &\leq \|x^{(k)*}-x^{(k')*}\|_2+\gamma\Big(\frac{1}{\alpha_k}\sum_{l\neq k}w^{(k,l)}+\frac{1}{\alpha_{k'}}\sum_{l\neq k'}w^{(k',l)}\Big),
\label{error_bound}
\end{split}
\end{align}
where the first inequality is due to the triangle inequality, the second one follows from \eqref{l_t_bound}, and the final one from \eqref{gradient_bound}.
If the term to the right of $\gamma$ on the right-hand side of \eqref{error_bound} is equal to zero, then $\|x^{(k)*}-x^{(k')*}\|_2>0$ holds by the assumption that ${\overline{x}}^{(k)}\neq {\overline{x}}^{(k')}$ for $k\neq k'$.
Otherwise, combining the inequality \eqref{error_bound} and the definition of $\gamma_{\max}$, we obtain
\begin{align*}
\|x^{(k)*}-x^{(k')*}\|_2
 & \geq \|{\overline{x}}^{(k)}-{\overline{x}}^{(k')}\|_2-\gamma\Big(\frac{1}{\alpha_k}\sum_{l\neq k}w^{(k,l)}+\frac{1}{\alpha_{k'}}\sum_{l\neq k'}w^{(k',l)}\Big)\\
 & = \left(\frac{\|{\overline{x}}^{(k)}-{\overline{x}}^{(k')}\|_2}{\frac{1}{\alpha_k}\sum_{l\neq k}w^{(k,l)}+\frac{1}{\alpha_{k'}}\sum_{l\neq k'}w^{(k',l)}}-\gamma\right)\Big(\frac{1}{\alpha_k}\sum_{l\neq k}w^{(k,l)}+\frac{1}{\alpha_{k'}}\sum_{l\neq k'}w^{(k',l)}\Big)\\
 & \geq \left(\gamma_{\max}-\gamma\right)\Big(\frac{1}{\alpha_k}\sum_{l\neq k}w^{(k,l)}+\frac{1}{\alpha_{k'}}\sum_{l\neq k'}w^{(k',l)}\Big)\\
 & >0.
\end{align*}
Thus, $x^{(k)*}\neq x^{(k')*}$ for all $k\neq k'$. 

Next, we show that $\gamma_{\min}\leq\gamma$ implies $x_i^*= x^{(k)*}$ for all $i\in C_k$, $k\in[N]$. 
To this end, we now prove that the optimality condition of \eqref{nlasso} is satisfied, that is, there exists $(z_{ij})_{i\neq j}$ such that $z_{ij}\in\partial\|x_i^*-x_j^*\|_2$ and $z_{ij}=-z_{ji}$ for all $i\neq j,\ i, j\in\mathcal{V}$, and
\begin{align}
\nabla f_i(x_i^*)+\gamma\sum_{j\neq i}w_{\{i,j\}}z_{ij}=0
\end{align}
for all $i\in\mathcal{V}$.
Let
\begin{align*}
z_{ij}^*:=
\left\{
\begin{array}{ll}
z^{(k,k')}, &(i\in {C}_{k},\ j\in {C}_{k'},\ k\neq k'), \\
\frac{1}{n_kw_{\{i,j\}}}\left\{\frac{1}{\gamma}\left(\nabla f_j(x^{(k)*})-\nabla f_i(x^{(k)*})\right)+p_j^{(k)}-p_i^{(k)}\right\}, &(i,j\in {C}_{k},\ i\neq j),
\end{array}
\right.
\end{align*}
where
\begin{align*}
p_i^{(k)}:=\sum_{l\neq k}\left(w_i^{(l)}-\frac{1}{n_k}w^{(k,l)}\right)z^{(k,l)}.
\end{align*}
Obviously, it holds that $z_{ij}^*=-z_{ji}^*$ for any $i\neq j,\ i, j\in\mathcal{V}$. For all $i\in {C}_{k},\ j\in {C}_{k'},\ k\neq k'$, it is valid $z_{ij}^*=z^{(k,k')}\in\partial{\|x^{(k)*}-x^{(k')*}\|}_2=\partial{\|x_i^*-x_j^*\|}_2$. 
For arbitrary $i,j\in {C}_{k}$, $k\in[N]$, we have
\begin{align*}
\lefteqn{\|\nabla f_j(x^{(k)*})-\nabla f_i(x^{(k)*})\|_2}\\
 &\leq \|\nabla f_j(x^{(k)*})-\nabla f_j({\overline{x}}^{(k)})\|_2+\|\nabla f_j({\overline{x}}^{(k)})-\nabla f_i({\overline{x}}^{(k)})\|_2+\|\nabla f_i({\overline{x}}^{(k)})-\nabla f_i(x^{(k)*})\|_2\\
 &\leq \|\nabla f_j({\overline{x}}^{(k)})-\nabla f_i({\overline{x}}^{(k)})\|_2+(L_i+L_j)\|{\overline{x}}^{(k)}-x^{(k)*}\|_2\\
 &\leq \|\nabla f_j({\overline{x}}^{(k)})-\nabla f_i({\overline{x}}^{(k)})\|_2+\frac{\gamma(L_i+L_j)}{\alpha_k}\sum_{l\neq k}w^{(k,l)},
\end{align*}
where the first inequality follows from the triangle inequality, the second one from the $L_i$-smoothness of $f_i$, and the third one from \eqref{l_t_bound} and \eqref{gradient_bound}. 
Accordingly, we obtain
\begin{align*}
   &{\|z_{ij}^*\|}_{2} \\
   &= \frac{1}{n_kw_{\{i,j\}}}{\left\|\frac{1}{\gamma}\left(\nabla f_j(x^{(k)*})-\nabla f_i(x^{(k)*})\right)+p_j^{(k)}-p_i^{(k)}\right\|}_{2}\\
   &\leq \frac{1}{n_kw_{\{i,j\}}\gamma}{\left\|\nabla f_j(x^{(k)*})-\nabla f_i(x^{(k)*})\right\|}_{2}+\frac{1}{n_kw_{\{i,j\}}}{\left\|p_j^{(k)}-p_i^{(k)}\right\|}_{2}\\
   &\leq \frac{1}{n_kw_{\{i,j\}}\gamma}{\left\|\nabla f_j({\overline{x}}^{(k)})-\nabla f_i({\overline{x}}^{(k)})\right\|}_{2}+\frac{1}{n_kw_{\{i,j\}}}\Big(\sum_{l\neq k}\left|w_i^{(l)}-w_j^{(l)}\right|+\frac{L_i+L_j}{\alpha_k}\sum_{l\neq k}w^{(k,l)}\Big)\\
   &\leq \frac{1}{n_kw_{\{i,j\}}\gamma_{\min}}{\left\|\nabla f_j({\overline{x}}^{(k)})-\nabla f_i({\overline{x}}^{(k)})\right\|}_{2}+\frac{\mu_{ij}^{(k)}}{n_kw_{\{i,j\}}}\\
   &\leq \frac{n_kw_{\{i,j\}}-\mu_{ij}^{(k)}}{n_kw_{\{i,j\}}}+\frac{\mu_{ij}^{(k)}}{n_kw_{\{i,j\}}}\\
   &= 1,
\end{align*}
where the first inequality follows from the triangle inequality, the second one from the definition of $p^{(k)}:=(p_i^{(k)})_{i\in C_k}$ and the previous inequality, the third and fourth ones from the definitions of $\mu_{ij}^{(k)}$ and $\gamma_{\min}$, respectively. 
This implies $z_{ij}^*\in\partial{\|0\|}_2=\partial{\|x_i^*-x_j^*\|}_2$ for all $i,j\in {C}_{k}$, $k\in[N]$. 
On the other hand, we have
\begin{align*}
\lefteqn{\nabla f_i(x_i^*)+\gamma\sum_{j\neq i}w_{\{i,j\}}z_{ij}^*}\\
   &= \nabla f_i(x^{(k)*})+\gamma\sum_{l\neq k}w_i^{(l)}z^{(k,l)}\\
   &\hspace{12em}+\gamma\sum_{\substack{j\neq i\\ j\in {C}_{k}}}w_{\{i,j\}}\frac{1}{n_kw_{\{i,j\}}}\left\{\frac{1}{\gamma}\left(\nabla f_j(x^{(k)*})-\nabla f_i(x^{(k)*})\right)+p_j^{(k)}-p_i^{(k)}\right\}\\
   &= \frac{1}{n_k}\sum_{j\in {C}_{k}}\nabla f_j(x^{(k)*})+\gamma\sum_{l\neq k}w_i^{(l)}z^{(k,l)}+\frac{1}{n_k}\gamma\sum_{\substack{j\neq i\\ j\in {C}_{k}}}\sum_{l\neq k}\left(w_j^{(l)}-w_i^{(l)}\right)z^{(k,l)}\\
   &= \frac{1}{n_k}\sum_{j\in {C}_{k}}\nabla f_j(x^{(k)*})+\frac{1}{n_k}\gamma\sum_{l\neq k}\sum_{j\in {C}_{k}}w_j^{(l)}z^{(k,l)}\\
   &= \frac{1}{n_k}\left(\nabla f^{(k)}(x^{(k)*})+\gamma\sum_{l\neq k}w^{(k,l)}z^{(k,l)}\right)\\
   &= 0,
\end{align*}
where the first equality is by the definition of $z^*_{ij}$, the second one is by the definitions of $p^{(k)}$ and $w^{(l)}$, and the final equality follows from \eqref{eq:FOC:centroid}. 
These results show that $(x^*_1,...,x^*_n)$ is the unique optimal solution of \eqref{nlasso} because of the strict convexity of the objective function of \eqref{nlasso}. 
Thus we conclude that $\overline{\mathcal{P}}$ perfectly recovers $\mathcal{P}$.
\vspace{.25em}

\noindent
{\it Proof of Statement 2.}~ 
Observe that if $x^{(1)*}=\cdots=x^{(N)*}$, then $x^{(k)*}=\overline{x}$ holds for $k\in[N]$ because of the definition of $\overline{x}$ and the strict convexity of $\sum\limits_{i\in\mathcal{V}}f_i(x)$.
From the inequality \eqref{gradient_bound}, we have
\begin{align*}
\max_{k}\frac{{\|\nabla f^{(k)}(\overline{x})\|}_2}{\sum_{l\neq k}w^{(k,l)}}\leq\gamma.
\end{align*}
Therefore, if $\gamma<\max_{k}\frac{{\|\nabla f^{(k)}(\overline{x})\|}_2}{\sum_{l\neq k}w^{(k,l)}}$, then ${\overline{x}}^{(1)}=\cdots={\overline{x}}^{(N)}$ does not hold. 
In addition, if we take $\gamma\geq\gamma_{\min}$, then $x_i^*={\overline{x}}^{(k)} (i\in C_k)$ is the optimal solution of \eqref{nlasso}, as in the proof of Statement 1. Thus $\overline{\mathcal{P}}$ is a non-trivial coarsening of $\mathcal{P}$.
 \end{proof}

\section{Network Trimmed Lasso}
\label{sec:NTL}
In the previous section, we see that when the prior information $(w_{\{i,j\}})_{\{i,j\}\in\mathcal{E}}$ is given adequately, we can use NL for clustering.
However, in the absence of the prior information, clustering by NL does not work well, as seen in Figure \ref{visual1}.
Rather than not forming %believable 
reasonable 
clusters, NL does not even form non-trivial clusters. 
Furthermore, it might not be easy to adequately define prior information for other tasks such as regression, as will be demonstrated in Section \ref{sec:numerical}. 
In this section, we consider an extension of NL by directly incorporating a non-convex constraint to enhance the cluster structure. 

\subsection{Cardinality-constrained formulation and its equivalent continuous penalty reformulation}
In NL \eqref{nlasso}, the cluster structure is captured by the number of non-zero components of the vectors $(\|x_i-x_j\|_2)_{\{i,j\}\in\mathcal{E}}$.
To directly control the structure, we consider a minimization problem \eqref{obj:card}--\eqref{const:card}, where the fitting of the dataset to models is optimized under a designated cardinality of non-zero components of the vector:
\begin{align}
\underset{x_1,...,x_n}{\mbox{minimize}} & \quad \sum_{i\in\mathcal{V}}f_i(x_i) 
\label{obj:card} \\
\text{subject to}                       & \quad \Big|\big\{\{i,j\}\in\mathcal{E}:\|x_i-x_j\|_2>0\big\}\Big|\leq K,
\label{const:card}
\end{align}
where $K$ is a non-negative integer such that $K\leq|\mathcal{E}|$. 
As $K$ decreases, the nodes agglomerate and form clusters. 
Note that if $K\in\{1,...,|\mathcal{E}|-1\}$, \eqref{obj:card}--\eqref{const:card} is a non-trivial optimization problem. 
\citet{hocking2011clusterpath} treat Convex Clustering as a convex relaxation of problem \eqref{obj:card}--\eqref{const:card}. 

While it is easier to interpret the hyperparameter $K$ in \eqref{obj:card}--\eqref{const:card} than $\gamma$ in NL \eqref{nlasso}, the left-hand side of  \eqref{const:card} is a discontinuous function in $(x_1,...,x_n)$ and is known to be difficult to attain the global optimality of \eqref{obj:card}--\eqref{const:card} in general.
Therefore, we approach the problem by rewriting the cardinality constraint with an equivalent continuous counterpart. 

$\xi:=(\|x_i-x_j\|_2)_{\{i,j\}\in\mathcal{E}}\in\mathbb{R}^{|\mathcal{E}|}$, and let us denote the sum of the $|\mathcal{E}|-K$ smallest components of the vector $\xi$ by 
\begin{align}
\tau_K(x_1,...,x_n)&:=\xi_{(K+1)}+\cdots+\xi_{(|\mathcal{E}|)}\mbox{ with }\xi=(\|x_i-x_j\|_2)_{\{i,j\}\in\mathcal{E}},
\label{def:tau}
\end{align}
where $\xi_{(i)}$ denotes the $i$-th largest component of $\xi$. 
Note that $\tau_K(x_1,...,x_n)\geq 0$ for any $(x_1,...,x_n)$. 
It is easy to see that the problem \eqref{obj:card}--\eqref{const:card} is equivalent to the following problem:
\begin{align}
\underset{x_1,...,x_n}{\mbox{minimize}} & \quad \sum_{i\in\mathcal{V}}f_i(x_i) 
\label{obj:TK_form} \\
\text{subject to}                       & \quad \tau_K(x_1,...,x_n)=0,
\label{const:TK_form}
\end{align}
by noting that \eqref{const:card} and \eqref{const:TK_form} are equivalent (see \citet[Subsection 5.2]{gotoh2018dc}). 
Note that \eqref{obj:TK_form}--\eqref{const:TK_form} is a continuous optimization problem if $f_i$ are continuous, whereas \eqref{obj:card}--\eqref{const:card} is not the case in that the constraint \eqref{const:card} includes a discontinuous function on its left-hand side. Then we introduce the following penalty form:

\begin{align}
\underset{x_1,...,x_n}{\mbox{minimize}} & \quad \sum_{i\in\mathcal{V}}f_i(x_i)+\gamma\tau_K(x_1,...,x_n),
\label{dcnl}
\end{align}
where $\gamma>0$. 
The second term of the objective function of \eqref{dcnl} plays a role of a penalty function of the cardinality constraint \eqref{const:card} in that (i) $\tau_K(x_1,...,x_n)\geq 0$ for all $(x_1,...,x_n)$, and (ii) $\tau_K(x_1,...,x_n)>0$ if and only if $\big|\big\{\{i,j\}\in\mathcal{E}:\xi_{\{i,j\}}>0\big\}\big|> K$. 

We call the problem \eqref{dcnl} \emph{Network Trimmed Lasso} (NTL for short). 
If we set $K=0$, the problem \eqref{dcnl} is reduced to NL \eqref{nlasso} with $w_{\{i,j\}}=1$ for all $\{i,j\}\in\mathcal{E}$.

While \eqref{dcnl} is now an unconstrained problem, another parameter $\gamma$ is introduced instead. 
We will show below that if we take $\gamma$ large enough, \eqref{dcnl} is guaranteed to be equivalent to the constrained problem \eqref{obj:TK_form}--\eqref{const:TK_form}, and accordingly, to the cardinality-constrained problem \eqref{obj:card}--\eqref{const:card}. 

\begin{theorem}\label{e.p.p.}
\begin{list}{}{
\itemindent 0.5em
\leftmargin 0em
}
\item[1.] Suppose that $f_i$ is $L_i$-smooth for each $i\in\mathcal{V}$, and let $\bm{x}^\gamma:=(x^\gamma_1,...,x^\gamma_n)$ be an optimal solution of \eqref{dcnl}.
Suppose that there exists $C>0$ such that $\|x^\gamma_i\|_{2}\leq C$ for all $i\in\mathcal{V}$ and any $\gamma>0$. 
Then $\bm{x}^\gamma$ is optimal to \eqref{obj:TK_form}--\eqref{const:TK_form} if
\begin{align}
\gamma>\sum_{i\in\mathcal{V}}({\|\nabla f_i(0)\|}_{2}+2L_{i}C).
\label{e.p.p.inequality}
\end{align}
\item[2.] In addition to the $L_i$-smoothness, suppose that $f_i$ is convex for each $i\in\mathcal{V}$, and let $\bm{x}^\gamma:=(x^\gamma_1,...,x^\gamma_n)$ be a locally optimal solution of \eqref{dcnl}.
Suppose that there exists $C>0$ such that $\|x^\gamma_i\|_{2}\leq C$ for all $i\in\mathcal{V}$ and any $\gamma>0$.
Then $\bm{x}^\gamma$ is locally optimal to \eqref{obj:TK_form}--\eqref{const:TK_form} if the inequality \eqref{e.p.p.inequality} holds.
%\end{enumerate}
\end{list}
\end{theorem}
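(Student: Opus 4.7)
The plan is a standard exact-penalty argument: writing $F_\gamma(\bm{x}):=\sum_{i\in\mathcal{V}}f_i(x_i)+\gamma\tau_K(\bm{x})$ for the objective of \eqref{dcnl}, the goal in both statements is to show that if \eqref{e.p.p.inequality} holds then any (locally) optimal $\bm{x}^\gamma$ of \eqref{dcnl} must satisfy $\tau_K(\bm{x}^\gamma)=0$. Once this is established, $\bm{x}^\gamma$ is feasible for \eqref{obj:TK_form}--\eqref{const:TK_form} and (local) optimality transfers automatically, since any feasible competitor $\bm{y}$ (in a neighbourhood of $\bm{x}^\gamma$, in the local case) has $\tau_K(\bm{y})=0$, so $F_\gamma(\bm{y})\ge F_\gamma(\bm{x}^\gamma)$ collapses to $\sum_{i\in\mathcal{V}}f_i(y_i)\ge\sum_{i\in\mathcal{V}}f_i(x^\gamma_i)$.

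The central construction is a ``merging'' of $\bm{x}^\gamma$ that witnesses the penalty. Let $S^{*}\subset\mathcal{E}$ be a set of $|\mathcal{E}|-K$ edges achieving $\tau_K(\bm{x}^\gamma)=\sum_{\{i,j\}\in S^{*}}\|x^\gamma_i-x^\gamma_j\|_2$; let $V_1,\dots,V_M$ be the connected components of the subgraph $(\mathcal{V},S^{*})$; and for each $m$ pick a representative $i_m\in V_m$ and set $\tilde{x}_i:=x^\gamma_{i_m}$ for $i\in V_m$. Then $\tilde{x}_i=\tilde{x}_j$ on every edge of $S^{*}$, so at most $K$ pairwise differences of $\tilde{\bm{x}}$ are nonzero and $\tau_K(\tilde{\bm{x}})=0$. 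Moreover, for each $i\in V_m$, iterating the triangle inequality along any $S^{*}$-path from $i$ to $i_m$ gives the geometric bound
\begin{equation*}
\|\tilde{x}_i-x^\gamma_i\|_2=\|x^\gamma_i-x^\gamma_{i_m}\|_2\le\sum_{\{u,v\}\in S^{*}}\|x^\gamma_u-x^\gamma_v\|_2=\tau_K(\bm{x}^\gamma),
\end{equation*}
which is what drives the rest of the argument.

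For Statement 1 I would assume $\tau_K(\bm{x}^\gamma)>0$ and exploit the global optimality $F_\gamma(\tilde{\bm{x}})\ge F_\gamma(\bm{x}^\gamma)$ to obtain $\sum_i[f_i(\tilde{x}_i)-f_i(x^\gamma_i)]\ge\gamma\tau_K(\bm{x}^\gamma)$. The descent inequality \eqref{inv_descent_lemma}, together with $\|\nabla f_i(x^\gamma_i)\|_2\le\|\nabla f_i(0)\|_2+L_iC$ and $\|\tilde{x}_i-x^\gamma_i\|_2\le 2C$, yields $f_i(\tilde{x}_i)-f_i(x^\gamma_i)\le(\|\nabla f_i(0)\|_2+2L_iC)\|\tilde{x}_i-x^\gamma_i\|_2$. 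Summing, applying the path-distance bound pointwise in $i$, and cancelling the positive factor $\tau_K(\bm{x}^\gamma)$ gives $\gamma\le\sum_{i\in\mathcal{V}}(\|\nabla f_i(0)\|_2+2L_iC)$, contradicting \eqref{e.p.p.inequality}.

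For Statement 2 the comparison point $\tilde{\bm{x}}$ may lie outside the neighbourhood of local optimality, so instead I would slide along the chord $\bm{x}_t:=(1-t)\bm{x}^\gamma+t\tilde{\bm{x}}$ for small $t>0$. Because $\tilde{x}_i-\tilde{x}_j=0$ on $S^{*}$, the triangle inequality yields $\tau_K(\bm{x}_t)\le\sum_{\{i,j\}\in S^{*}}\|x_{t,i}-x_{t,j}\|_2\le(1-t)\tau_K(\bm{x}^\gamma)$. The extra convexity hypothesis enters here, via $f_i(x_{t,i})\le(1-t)f_i(x^\gamma_i)+tf_i(\tilde{x}_i)$; combining this with $F_\gamma(\bm{x}_t)\ge F_\gamma(\bm{x}^\gamma)$ and dividing by $t$ recovers the same inequality $\gamma\tau_K(\bm{x}^\gamma)\le\sum_i[f_i(\tilde{x}_i)-f_i(x^\gamma_i)]$ as in Statement 1, so the argument concludes identically. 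The main obstacle I anticipate is the choice of comparison point: without the path-distance bound controlling $\|\tilde{x}_i-x^\gamma_i\|_2$ \emph{uniformly} in $i$ by $\tau_K(\bm{x}^\gamma)$, summation would introduce a $|\mathcal{V}|$-factor and spoil the threshold, and without the chord construction combined with convexity one cannot leverage the globally-merged point $\tilde{\bm{x}}$ at a merely local minimiser.
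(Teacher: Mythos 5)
Your proposal is correct, and for Statement~1 it is essentially the paper's own argument: the same merged comparison point built from the connected components of the $|\mathcal{E}|-K$ smallest edges, the same path-sum bound $\|\tilde{x}_i-x^\gamma_i\|_2\le\tau_K(\bm{x}^\gamma)$, and the same smoothness estimate producing the coefficient $\|\nabla f_i(0)\|_2+2L_iC$ (your use of a component representative in place of the paper's component average is immaterial, since both preserve the bound by $C$ and the bound by $\tau_K(\bm{x}^\gamma)$). For Statement~2 you take a mildly different route to the same end. The paper introduces the auxiliary convex problem \eqref{scrdcnl} with indicator weights on the selected edges, observes that its objective dominates that of \eqref{dcnl} with equality at $\bm{x}^\gamma$, concludes that the local minimizer is a global minimizer of the convex surrogate, and then reruns the Statement~1 comparison against $\tilde{\bm{x}}$. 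You instead exploit convexity directly along the chord $(1-t)\bm{x}^\gamma+t\tilde{\bm{x}}$, using that the selected edge differences scale by $(1-t)$ so that $\tau_K$ is bounded by $(1-t)\tau_K(\bm{x}^\gamma)$, and divide by $t$ to recover the same inequality $\gamma\tau_K(\bm{x}^\gamma)\le\sum_i\bigl[f_i(\tilde{x}_i)-f_i(x^\gamma_i)\bigr]$. Both devices use convexity to transport information from the far-away point $\tilde{\bm{x}}$ to the merely local minimizer; your chord argument is slightly more elementary and self-contained, while the paper's surrogate-problem formulation has the side benefit of being reused verbatim in Lemmas~\ref{dccbound} and~\ref{dcnlbound} to establish boundedness of local minimizers. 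The final transfer of (local) optimality once $\tau_K(\bm{x}^\gamma)=0$ is handled correctly in your write-up.
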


\begin{proof}[Proof of Statement 1]
Note that if $\tau_K(x^\gamma_1,...,x^\gamma_n)=0$ holds, $\bm{x}^\gamma$ is a minimizer of \eqref{obj:TK_form}--\eqref{const:TK_form}. 
Assume that $\tau_K(x^\gamma_1,...,x^\gamma_n)>0$. In this case, let $\mathcal{E}'\subset\mathcal{E}$ be a set of edges $\{i,j\}\in\mathcal{E}$ whose $\|x^\gamma_i-x^\gamma_j\|_2$ is in the smallest $|\mathcal{E}|-K$ components and divide $\mathcal{V}$ into connected components $\mathcal{\mathcal{C}}_{1},\ldots,\mathcal{C}_{m}$ of the graph $\left(\mathcal{V},\mathcal{E}'\right)$, then we set
\begin{align*}
x'_{i}&:=\sum_{j\in \mathcal{C}_{k}}\frac{{x}_{j}^{\gamma}}{|\mathcal{C}_{k}|},
\end{align*}
for $i\in\mathcal{C}_{k}$, $k\in[m]$. 
Obviously, $\tau_K(x'_1,...,x'_n)=0$ and $\|x'_i\|_{2}\leq C$ are fulfilled. 
If $i,j\in \mathcal{C}_{k}, k\in[m]$ and $i\neq j$, then there exists a simple path between $i$ and $j$ on $(\mathcal{V},\mathcal{E}')$, so
\begin{align*}
\|x_{i}^{\gamma}-x_{j}^{\gamma}\|_{2}
 &\leq \sum_{\{i',j'\}\in\mathcal{E}'}\|{x}_{i'}^{\gamma}-{x}_{j'}^{\gamma}\|_2\\
 &\leq \tau_K(x^\gamma_1,...,x^\gamma_n).
\end{align*}
Thus we obtain
\begin{align}
\begin{split}
\|x'_{i}-{x}_{i}^{\gamma}\|_{2}
  &\leq \sum_{j\in \mathcal{C}_{k}}\frac{{\|{x}_{i}^{\gamma}-{x}_{j}^{\gamma}\|}_{2}}{|\mathcal{C}_{k}|} \\
  &\leq \sum_{j\in \mathcal{C}_{k}}\frac{\tau_K(x^\gamma_1,...,x^\gamma_n)}{|\mathcal{C}_{k}|} \\
  &\leq \tau_K(x^\gamma_1,...,x^\gamma_n),
\label{diffofsuggest}
\end{split}
\end{align}
for all $i\in \mathcal{C}_{k}, k\in[m]$. 
 From ${\|{x}_{i}^{\gamma}\|}_{2}\leq C$ and $f_i$'s $L_i$-smoothness, we have
\begin{align}
\begin{split}
\lefteqn{\sum_{i\in\mathcal{V}}f(x_i^{\gamma})+\gamma\tau_K(\bm{x}^{\gamma})-\left(\sum_{i\in\mathcal{V}}f(x'_i)+\gamma\tau_K(\bm{x}')\right)} \\
    &\geq \gamma\tau_K(\bm{x}^{\gamma})+\sum_{i\in\mathcal{V}}\left({\nabla f_i(x_i^{\gamma})}^\top(x_i^{\gamma}-x'_i)-\frac{L_i}{2}\|x_i^{\gamma}-x'_i\|_2^2\right) \\
    &\geq \gamma\tau_K(\bm{x}^{\gamma})-\sum_{i\in\mathcal{V}}{\|x_i^{\gamma}-x'_i\|}_{2}\left({\|\nabla f_i(x_i^{\gamma})\|}_{2}+\frac{L_i}{2}{\|x_i^{\gamma}-x'_i\|}_{2}\right) \\
    &\geq \gamma\tau_K(\bm{x}^{\gamma})-\sum_{i\in\mathcal{V}}{\|x_i^{\gamma}-x'_i\|}_{2}\left({\|\nabla f_i(0)\|}_{2}+{\|\nabla f_i(x_i^{\gamma})-\nabla f_i(0)\|}_{2}+\frac{L_i}{2}\left({\|x_i^{\gamma}\|}_{2}+{\|x'_i\|}_{2}\right)\right) \\
    &\geq \gamma\tau_K(\bm{x}^{\gamma})-\sum_{i\in\mathcal{V}}{\|x_i^{\gamma}-x'_i\|}_{2}\left({\|\nabla f_i(0)\|}_{2}+L_i{\|x_i^{\gamma}\|}_{2}+\frac{L_i}{2}\left({\|x_i^{\gamma}\|}_{2}+{\|x'_i\|}_{2}\right)\right) \\
    &\geq \gamma\tau_K(\bm{x}^{\gamma})-\sum_{i\in\mathcal{V}}{\|x_i^{\gamma}-x'_i\|}_{2}\left({\|\nabla f_i(0)\|}_{2}+2L_i C\right) \\
    &\geq \gamma\tau_K(\bm{x}^{\gamma})-\sum_{i\in\mathcal{V}}\tau_K(\bm{x}^{\gamma})\left({\|\nabla f_i(0)\|}_{2}+2L_i C\right) \\
    &= \tau_K(\bm{x}^{\gamma})\left(\gamma-\sum_{i\in\mathcal{V}}\left({\|\nabla f_i(0)\|}_{2}+2L_i C\right)\right) \\
    &> 0,
\label{suggestineq}
\end{split}
\end{align}
where the first and fourth inequalities follow from the $L_i$-smoothness of $f_i$, where we apply the inequality \eqref{inv_descent_lemma} to the first one, the second one from the Cauchy-Schwarz inequality, the third one from the triangle inequality, the fifth one from the boundedness of $\bm{x}^{\gamma}$ and $\bm{x}'$, the sixth one from the inequality \eqref{diffofsuggest}. The above inequality \eqref{suggestineq} contradicts the optimality of $\bm{x}^{\gamma}$.

\noindent
{\it Proof of Statement 2.}~ 
Note that if $\tau_K(x^\gamma_1,...,x^\gamma_n)=0$ is fulfilled, $\bm{x}^\gamma$ is a local minimizer of \eqref{obj:TK_form}--\eqref{const:TK_form}. Assume $\tau_K(x^\gamma_1,...,x^\gamma_n)>0$. Let us define $\mathcal{E}'$ as in the proof of the statement 1., let
\begin{align*}
v_{\{i,j\}}:=
\left\{
\begin{array}{ll}
1, &\{i,j\}\in\mathcal{E}', \\
0, &otherwise,
\end{array}
\right.
\end{align*}
and consider the following problem:
\begin{align}
\underset{x_1,...,x_n}{\mbox{minimize}} & \quad \sum_{i\in\mathcal{V}}f_i(x_i)+\gamma\sum_{\{i,j\}\in\mathcal{E}}v_{\{i,j\}}\|x_i-x_j\|_{2}.
\label{scrdcnl}
\end{align}
Note that $\ \sum_{\{i,j\}\in\mathcal{E}}v_{\{i,j\}}\|x^\gamma_i-x^\gamma_j\|_{2}=\tau_K(x^\gamma_1,...,x^\gamma_n)$. We have $\sum_{\{i,j\}\in\mathcal{E}}v_{\{i,j\}}\|x_i-x_j\|_{2}\geq\tau_K(x_1,...,x_n)$ for any $(x_1,...,x_n)$ by the definition of $\tau_K$. Since $\bm{x}^\gamma$ is locally optimal to \eqref{dcnl}, $\bm{x}^\gamma$ is a local minimizer of \eqref{scrdcnl}. Because of the convexity of \eqref{scrdcnl}, $\bm{x}^\gamma$ is optimal to \eqref{scrdcnl}. Determining $\bm{x}'$ in the same way as in the proof for the statement 1., we have $\sum_{\{i,j\}\in\mathcal{E}}v_{\{i,j\}}\|x'_i-x'_j\|_{2}=0$, $\|x'_i\|_{2}\leq C$, and the inequality \eqref{diffofsuggest}. By the same calculation as in \eqref{suggestineq}, we reach the contradiction to the fact that $\bm{x}^\gamma$ is optimal to \eqref{scrdcnl}.
\end{proof}

By Statement 1. of Theorem \ref{e.p.p.}, we are motivated to solve NTL~\eqref{dcnl} instead of the cardinality-constrained problem \eqref{obj:card}--\eqref{const:card} since NTL~\eqref{dcnl} is an unconstrained  minimization of a continuous function while \eqref{obj:card}--\eqref{const:card} involves a constraint defined by a discontinuous function. 
Despite the continuity of the objective function, developing a global optimization algorithm for \eqref{dcnl} is not easy especially when the number of variables is large. 
On the other hand, Statement 2. of Theorem \ref{e.p.p.} yields conditions under which a locally optimal solution to \eqref{obj:card}--\eqref{const:card} is obtained by a locally optimal solution to NTL~\eqref{dcnl}, which is attainable by, for example, Proximal ADMM (\citet{li2015global}) as shown in the next section.   

Both statements of Theorem \ref{e.p.p.} suppose that the size of solution set is bounded by a constant $C$. 
In the following, we will see a few examples where values of $C$ can be explicitly given.  

\begin{example}[Network Trimmed Lasso for ordinary clustering]\label{dcc}
Consider the clustering problem of a data set $a_i\in\mathbb{R}^p,i\in\mathcal{V}$ with $f_i(x_i)=\frac{1}{2}\|x_i-a_i\|_2^2,i\in\mathcal{V}$. 
(Note that we do not limit to the case where $\mathcal{E}=\left\{\{i,j\}\mid i\neq j, i,j\in\mathcal{V}\right\}$.) 
The NTL then becomes 
\begin{align}
\underset{x_1,...,x_n\in\mathbb{R}^p}{\mbox{minimize}} & \quad \frac{1}{2}\sum_{i\in\mathcal{V}} \|x_i-a_i\|_2^2+\gamma\tau_K(x_1,...,x_n).
\label{dcclustering}
\end{align}
\end{example}
For this clustering problem, we can find a threshold value of $\gamma$ of Theorem \ref{e.p.p.} explicitly in a simple manner. 
To see this, first observe the following lemma, which shows the boundedness of locally optimal solution to \eqref{dcclustering}. 
\begin{lemma}
\label{dccbound}
Let $C=\max_{i\in\mathcal{V}}{\|a_i\|}_2$. 
 For any $\gamma>0$, any locally optimal solution $\bm{x}^*$ of \eqref{dcclustering} satisfies ${\|x_i^*\|}_2\leq C$ for all $i\in\mathcal{V}$.
\end{lemma}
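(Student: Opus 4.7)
The plan is to argue by contradiction: suppose some locally optimal $\bm{x}^*$ satisfies $\|x_{i_0}^*\|_2 > C$ for an index $i_0\in\mathcal{V}$, and exhibit a direction out of $\bm{x}^*$ along which the objective $F(\bm{x}) := \tfrac{1}{2}\sum_{i\in\mathcal{V}}\|x_i - a_i\|_2^2 + \gamma\tau_K(\bm{x})$ strictly decreases. The natural candidate is the segment toward the componentwise Euclidean projection of $\bm{x}^*$ onto the closed ball $B := \{x\in\mathbb{R}^p : \|x\|_2 \leq C\}$: set $x_i' := \Pi_B(x_i^*)$ for each $i\in\mathcal{V}$ and consider $\bm{x}(\lambda) := (1-\lambda)\bm{x}^* + \lambda\bm{x}'$ for $\lambda\in[0,1]$.

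The first step would be to show that the data-fitting term $g(\bm{x}) := \tfrac{1}{2}\sum_i\|x_i - a_i\|_2^2$ strictly decreases along this segment. Since every $a_i$ lies in $B$ by the definition of $C$, the projection inequality $\|x_i^* - a_i\|_2^2 \geq \|x_i^* - x_i'\|_2^2 + \|x_i' - a_i\|_2^2$ gives $\|x_i' - a_i\|_2 \leq \|x_i^* - a_i\|_2$, strict whenever $x_i^*\notin B$ and hence at $i=i_0$. Therefore $g(\bm{x}') < g(\bm{x}^*)$, and convexity of $g$ yields $g(\bm{x}(\lambda)) \leq (1-\lambda)g(\bm{x}^*) + \lambda g(\bm{x}') < g(\bm{x}^*)$ for every $\lambda\in(0,1]$.

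The second step would be to show that the penalty $\tau_K$ does not increase along the segment. The non-expansiveness of projection onto a convex set gives $\|x_i' - x_j'\|_2 \leq \|x_i^* - x_j^*\|_2$ for every $\{i,j\}\in\mathcal{E}$, so combining with the triangle inequality
\[
\|x_i(\lambda) - x_j(\lambda)\|_2 \leq (1-\lambda)\|x_i^* - x_j^*\|_2 + \lambda\|x_i' - x_j'\|_2 \leq \|x_i^* - x_j^*\|_2
\]
for all $\lambda\in[0,1]$. Rewriting $\tau_K$ via the identity $\tau_K(\bm{x}) = \min_{S\subset\mathcal{E},\,|S|=K}\sum_{\{i,j\}\in\mathcal{E}\setminus S}\|x_i - x_j\|_2$, which is a minimum of functions each nondecreasing in every edge length $\|x_i-x_j\|_2$, then yields $\tau_K(\bm{x}(\lambda)) \leq \tau_K(\bm{x}^*)$ for all $\lambda\in[0,1]$.

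Combining the two bounds gives $F(\bm{x}(\lambda)) < F(\bm{x}^*)$ for every $\lambda\in(0,1]$, while $\bm{x}(\lambda)\to\bm{x}^*$ as $\lambda\downarrow 0$, contradicting local optimality. The only mildly subtle point is handling the non-smooth, non-convex penalty $\tau_K$: the min-over-subsets representation makes it transparent that simultaneously shrinking every edge length can only decrease $\tau_K$, and this contraction is exactly what the Euclidean projection onto the ball $B$ provides.
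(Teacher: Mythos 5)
Your proof is correct, but it resolves the local-optimality issue by a genuinely different mechanism than the paper. The paper first passes to the convex surrogate \eqref{scrdcc}: it fixes the index set $\mathcal{E}'$ of the $|\mathcal{E}|-K$ smallest edge lengths at $\bm{x}^*$, observes that $\sum_{\{i,j\}}v_{\{i,j\}}\|x_i-x_j\|_2$ majorizes $\tau_K$ everywhere with equality at $\bm{x}^*$, concludes that $\bm{x}^*$ is a \emph{global} minimizer of the convex problem \eqref{scrdcc}, and only then compares with the (possibly distant) projected point $\bm{x}'$. You instead stay with the non-convex objective and show strict decrease along the entire segment from $\bm{x}^*$ to $\bm{x}'$, so that the contradiction is obtained arbitrarily close to $\bm{x}^*$; the ingredient that replaces the surrogate is the representation $\tau_K(\bm{x})=\min_{|S|=K}\sum_{e\notin S}\xi_e$, which is a minimum of functions monotone in the edge lengths and therefore inherits the simultaneous contraction $\|x_i(\lambda)-x_j(\lambda)\|_2\le\|x_i^*-x_j^*\|_2$ produced by the projection. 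Both arguments use the same projection inequality for the quadratic term and the same non-expansiveness for the edge differences; the trade-off is that the paper's surrogate device is reused throughout Section 3 (e.g., in Theorem \ref{e.p.p.} Statement 2 and Lemma \ref{dcnlbound}) and extends verbatim to comparisons with far-away points, whereas your segment argument is more self-contained here and makes the role of local optimality completely explicit without invoking convexity of the penalized problem. One small point to state explicitly if you write this up: the strictness at $i_0$ requires $\|x_{i_0}^*-x_{i_0}'\|_2>0$, which follows since $x_{i_0}^*\notin B$, and the convexity of the quadratic loss is what propagates the strict inequality from $\lambda=1$ to all $\lambda\in(0,1]$.
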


\begin{proof}
Let $\mathcal{E}'\subset\mathcal{E}$ be a set of edges $\{i,j\}\in\mathcal{E}$ whose $\|x^*_i-x^*_j\|_2$ is in the smallest $|\mathcal{E}|-K$ components out of all the $|\mathcal{E}|$ components, then we define
\begin{align*}
v_{\{i,j\}}:=
\left\{
\begin{array}{ll}
1, &\{i,j\}\in\mathcal{E}', \\
0, &otherwise,
\end{array}
\right.
\end{align*}
and consider the following problem:
\begin{align}
\underset{x_1,...,x_n}{\mbox{minimize}} & \quad \frac{1}{2}\sum_{i\in\mathcal{V}} \|x_i-a_i\|_2^2+\gamma\sum_{\{i,j\}\in\mathcal{E}}v_{\{i,j\}}\|x_i-x_j\|_{2}.
\label{scrdcc}
\end{align}
From the convexity of $\frac{1}{2}\sum_{i\in\mathcal{V}} \|x_i-a_i\|_2^2$, as in the proof of the second statement of Theorem \ref{e.p.p.}, $\bm{x}^*$ is an optimal solution of \eqref{scrdcc}.
Assume that there exists an $i\in\mathcal{V}$ such that $\|x_i^*\|_2>C$.
Let $O=\{i\mid{\|x^*_i\|}_{2}>C\}$, and define
\begin{align*}
x'_i:=\left\{
\begin{array}{cl}
\frac{R}{\|x^*_i\|_2}x^*_i, & i\in O, \\
x^*_i, & i\notin O.
\end{array}
\right.
\end{align*}
Obviously, it is valid that
\begin{align*}
\|x^*_i-a_i\|_2^2&=\|x'_i-a_i\|_2^2,
\end{align*}
for $i\notin O$, and
\begin{equation*}
\|x^*_i-x^*_j\|_2=\|x'_i-x'_j\|_{2},
\end{equation*}
for $i,j\notin O$. 
Because $x'_i$ is the projection of $x^*_i$ onto the closed convex set $\{x\in\mathbb{R}^{p}:\|x\|_2\leq R\}$, we obtain
\begin{align*}
\|x^*_i-x^*_j\|_2^2
    &= \|x^*_i-x'_j\|_2^2 \\
    &= \|x^*_i-x'_i+x'_i-x'_j\|_2^2 \\
    &= {\|x^*_i-x'_i\|}_2^2+2(x^*_i-x'_i)^\top(x'_i-x'_j)+\|x'_i-x'_j\|_2^2 \\
    &\geq \big\|\left(1-\frac{R}{\|x^*_i\|_2}\right)x^*_i\big\|_2^2+\|x'_i-x'_j\|_2^2 \\
    &> \|x'_i-x'_j\|_2^2,
\end{align*}
for $i\in O, j\notin O$. In the same way, we have
\begin{align*}
 \|x^*_i-x^*_j\|_2^2 &= \|x^*_i-x'_i+x'_i-x'_j+x'_j-x^*_j\|_2^2 \\
 &= \|x^*_i-x'_i+x'_j-x^*_j\|_2^2+2(x^*_i-x'_i)^\top(x'_i-x'_j)+2(x'_j-x^*_j)^\top(x'_i-x'_j)+\|x'_i-x'_j\|_2^2 \\
 &\geq \|x'_i-x'_j\|_2^2,
\end{align*}
for $i, j\in O$, and
\begin{align*}
\|x^*_i-a_i\|_2^2
  &= \|x^*_i-x'_i+x'_i-a_i\|_2^2 \\
  &= {\|x^*_i-x'_i\|}_2^2+2(x^*_i-x'_i)^\top(x'_i-a_i)+\|x'_i-a_i\|_2^2 \\
  &\geq \left\|\left(1-\frac{R}{\|x^*_i\|_2}\right)x^*_i\right\|_2^2+\|x'_i-a_i\|_2^2 \\
  &\geq \left(\|x^*_i\|_2-R\right)^2+\|x'_i-a_i\|_2^2 \\
  &> \|x'_i-a_i\|_2^2,
\end{align*}
for $i\in O$. 
This implies that
\begin{align}
\frac{1}{2}\sum_{i\in\mathcal{V}}\|x_i^*-a_i\|_2^2&>\frac{1}{2}\sum_{i\in\mathcal{V}}\|x'_i-a_i\|_2^2,\\
\sum_{\{i,j\}\in\mathcal{E}}v_{\{i,j\}}\|x^*_i-x^*_j\|_{2}&\ge\sum_{\{i,j\}\in\mathcal{E}}v_{\{i,j\}}\|x'_i-x'_j\|_{2}.
\end{align}
Thus we have
\begin{align}
\frac{1}{2}\sum_{i\in\mathcal{V}}\|x_i^*-a_i\|_2^2+\gamma\sum_{\{i,j\}\in\mathcal{E}}v_{\{i,j\}}\|x^*_i-x^*_j\|_{2}>\frac{1}{2}\sum_{i\in\mathcal{V}}\|x'_i-a_i\|_2^2+\gamma\sum_{\{i,j\}\in\mathcal{E}}v_{\{i,j\}}\|x'_i-x'_j\|_{2},
\end{align}
which contradicts the fact that $\bm{x}^*$ is optimal to \eqref{scrdcc}.
Consequently, we have $\|x_i^*\|_2\leq C$ for all $i\in\mathcal{V}$.
\end{proof}

 From Theorem~\ref{e.p.p.} and Lemma~\ref{dccbound}, we obtain the following result, which dictates an explicit threshold value of the penalty parameter $\gamma$ for ordinary clustering.
\begin{corollary}
\label{c.e.p.p.}
Let $C=\max_{i\in\mathcal{V}}\|a_i\|_2$. 
If $\gamma>3nC$, then any optimal solution (resp. locally optimal solution) of \eqref{dcclustering} is also optimal (resp. locally optimal) to the cardinality-constrained clustering problem (i.e., Problem \eqref{obj:card}--\eqref{const:card} with $f_i(x_i)=\frac{1}{2}\|x_i-a_i\|_2^2$).
\end{corollary}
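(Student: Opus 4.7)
The plan is to obtain the corollary as a direct specialization of Theorem~\ref{e.p.p.} to the clustering loss $f_i(x_i)=\frac{1}{2}\|x_i-a_i\|_2^2$, using Lemma~\ref{dccbound} to supply the required uniform bound on solutions. No new argument is needed; the task is essentially bookkeeping to verify the hypotheses of Theorem~\ref{e.p.p.} and to show that the general threshold $\sum_{i\in\mathcal{V}}(\|\nabla f_i(0)\|_2+2L_iC)$ simplifies to $3nC$ in this setting.

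First I would check that the structural hypotheses of Theorem~\ref{e.p.p.} are met. Each $f_i$ is convex (it is a squared $\ell_2$-distance) and $1$-smooth, so we may take $L_i=1$ for all $i\in\mathcal{V}$. Hence both Statement~1 (for global optima, where only $L_i$-smoothness is required) and Statement~2 (for local optima, where convexity is additionally required) are applicable to problem~\eqref{dcclustering}.

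Next I would invoke Lemma~\ref{dccbound} to supply the uniform bound needed by Theorem~\ref{e.p.p.}: for every $\gamma>0$, any locally optimal (and in particular, any globally optimal) solution $\bm{x}^\gamma$ of~\eqref{dcclustering} satisfies $\|x_i^\gamma\|_2\le C$ for all $i\in\mathcal{V}$, with $C=\max_{i\in\mathcal{V}}\|a_i\|_2$. This is exactly the boundedness assumption demanded by both statements of Theorem~\ref{e.p.p.}.

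Finally I would compute the threshold from~\eqref{e.p.p.inequality} in closed form. Since $\nabla f_i(0)=-a_i$, we have $\|\nabla f_i(0)\|_2=\|a_i\|_2\le C$, and combined with $L_i=1$ this yields
\begin{align}
\sum_{i\in\mathcal{V}}\bigl(\|\nabla f_i(0)\|_2+2L_iC\bigr)\;\le\;\sum_{i\in\mathcal{V}}(C+2C)\;=\;3nC.
\end{align}
Thus $\gamma>3nC$ implies the inequality~\eqref{e.p.p.inequality}, and the (local or global) optimality transfer guaranteed by Theorem~\ref{e.p.p.} gives the claim. There is no genuine obstacle: the only care needed is to ensure that the roles of $C$ in Lemma~\ref{dccbound} and in Theorem~\ref{e.p.p.} are used consistently, so that the same constant bounds both $\|x_i^\gamma\|_2$ and $\|\nabla f_i(0)\|_2$.
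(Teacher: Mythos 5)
Your proposal is correct and follows exactly the paper's own route: invoke Lemma~\ref{dccbound} to supply the uniform bound $\|x_i^\gamma\|_2\le C$ required by Theorem~\ref{e.p.p.}, note that each $f_i$ is convex and $1$-smooth with $\|\nabla f_i(0)\|_2=\|a_i\|_2\le C$, and bound the threshold $\sum_{i\in\mathcal{V}}(\|\nabla f_i(0)\|_2+2L_iC)\le 3nC$. Nothing is missing.
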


\begin{proof}
Since $f_i(x_i)=\frac{1}{2}{\|x_i-a_i\|}_2^2$ is $1$-smooth and $\|\nabla f_i(0)\|_2=\|-a_i\|_2\leq C$, we have
\begin{align}
\sum_{i\in\mathcal{V}}\left({\|\nabla f_i(0)\|}_{2}+2L_{i}C\right)\leq\sum_{i\in\mathcal{V}}\left(C+2C\right)=3nC.
\end{align}
This completes the proof.
\end{proof}

Beyond the ordinary clustering problem, we can raises further examples where the threshold of $\gamma$ is derived.  
Consider a general case where $f_i$ is $\alpha_i$-strongly convex for all $i\in\mathcal{V}$. 
The following lemma claims that any locally optimal solution to NTL~\eqref{dcnl} is then bounded. 

\begin{lemma}
\label{dcnlbound}
Assume that $f_i$ is $\alpha_i$-strongly convex for all $i\in\mathcal{V}$.
Denote an unique optimizer of $\min f_i(x)$ by $\overline{x}_i$.
Let $C=\left(\frac{2}{\alpha}\sum_{j\in\mathcal{V}}\left(f_j(0)-f_j(\overline{x}_j)\right)\right)^\frac{1}{2}+\max_{i\in\mathcal{V}}\|\overline{x}_i\|_2$, where $\alpha=\min_{i\in\mathcal{V}}\alpha_i$.
Then for any $\gamma>0$, any locally optimal solution $\bm{x}^*$ of \eqref{dcnl} satisfies ${\|x_i^*\|}_2\leq C$ for all $i\in\mathcal{V}$.
\end{lemma}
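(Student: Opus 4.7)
The plan is to follow the same two-step scaffold as Lemma~\ref{dccbound}, but to replace its projection-onto-a-ball argument (which exploited the quadratic form of the loss) with a direct application of the strong-convexity inequality \eqref{plain_bound}.

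The first step I would carry out is the convex-surrogate reduction: given the locally optimal $\bm{x}^*$, let $\mathcal{E}'\subset\mathcal{E}$ index the smallest $|\mathcal{E}|-K$ values of $(\|x_i^*-x_j^*\|_2)_{\{i,j\}\in\mathcal{E}}$, set $v_{\{i,j\}}=1$ on $\mathcal{E}'$ and $v_{\{i,j\}}=0$ otherwise, and consider the surrogate \eqref{scrdcnl}. Its penalty upper-bounds $\tau_K$ globally and coincides with it at $\bm{x}^*$, so local optimality of $\bm{x}^*$ for \eqref{dcnl} transfers to \eqref{scrdcnl}; since each $f_i$ is convex (being $\alpha_i$-strongly convex), the surrogate is convex and $\bm{x}^*$ is in fact a \emph{global} minimizer of \eqref{scrdcnl}. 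This reduction is the only step where care is really needed, and I expect it to be the main ``obstacle,'' although in practice it is the same bookkeeping as in Statement~2 of Theorem~\ref{e.p.p.} and in Lemma~\ref{dccbound}.

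The second step is to compare the surrogate objective at $\bm{x}^*$ with its value at the origin $\bm{0}$. Since the penalty vanishes at $\bm{0}$, global optimality of $\bm{x}^*$ for \eqref{scrdcnl} gives
\begin{equation}
\sum_{i\in\mathcal{V}} f_i(x_i^*) \;\le\; \sum_{i\in\mathcal{V}} f_i(x_i^*) + \gamma\!\!\sum_{\{i,j\}\in\mathcal{E}}\!\! v_{\{i,j\}}\|x_i^*-x_j^*\|_2 \;\le\; \sum_{i\in\mathcal{V}} f_i(0).
\end{equation}
Then I would apply \eqref{plain_bound} to each $f_i$ at its unique minimizer $\overline{x}_i$ and sum over $i\in\mathcal{V}$ to obtain
\begin{equation}
\sum_{i\in\mathcal{V}}\tfrac{\alpha_i}{2}\|x_i^*-\overline{x}_i\|_2^2 \;\le\; \sum_{i\in\mathcal{V}}\bigl(f_i(x_i^*)-f_i(\overline{x}_i)\bigr) \;\le\; \sum_{j\in\mathcal{V}}\bigl(f_j(0)-f_j(\overline{x}_j)\bigr).
\end{equation}

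Finally, I would extract the per-index bound: for any fixed $i\in\mathcal{V}$, a single summand is dominated by the whole sum, and $\alpha_i\ge\alpha$, so $\tfrac{\alpha}{2}\|x_i^*-\overline{x}_i\|_2^2\le\sum_j(f_j(0)-f_j(\overline{x}_j))$, which yields $\|x_i^*-\overline{x}_i\|_2\le\bigl(\tfrac{2}{\alpha}\sum_j(f_j(0)-f_j(\overline{x}_j))\bigr)^{1/2}$. The triangle inequality $\|x_i^*\|_2\le\|x_i^*-\overline{x}_i\|_2+\|\overline{x}_i\|_2\le\|x_i^*-\overline{x}_i\|_2+\max_{i\in\mathcal{V}}\|\overline{x}_i\|_2$ then delivers exactly the constant $C$ in the statement.
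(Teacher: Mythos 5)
Your proposal is correct and follows essentially the same route as the paper's own proof: reduce to the convex surrogate \eqref{s_scrdcnl} exactly as in Statement~2 of Theorem~\ref{e.p.p.}, compare the objective at $\bm{x}^*$ with its value at the origin, apply \eqref{plain_bound} termwise and bound a single summand by the full sum, then finish with the triangle inequality. No gaps.
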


\begin{proof}
From the convexity of $\sum_{i\in\mathcal{V}} f_i(x_i)$, as in the proof of the second statement of Theorem \ref{e.p.p.}, $\bm{x}^*$ is optimal to
\begin{align}
\underset{x_1,...,x_n}{\mbox{minimize}} & \quad \sum_{i\in\mathcal{V}} f_i(x_i)+\gamma\sum_{\{i,j\}\in\mathcal{E}}v_{\{i,j\}}\|x_i-x_j\|_{2},
\label{s_scrdcnl}
\end{align}
where $v_{\{i,j\}}$ is defined in the same way. Since $\bm{x}^*$ is optimal to \eqref{s_scrdcnl}, we have
\begin{align}
\sum_{i\in\mathcal{V}} f_i(x^*_i)\le\sum_{i\in\mathcal{V}} f_i(x_i)+\gamma\sum_{\{i,j\}\in\mathcal{E}}v_{\{i,j\}}\|x_i-x_j\|_{2}\le\sum_{i\in\mathcal{V}} f_i(0).
\label{by_opt_s_scrdcnl}
\end{align}
From the strong convexity of $f_i$, combining \eqref{by_opt_s_scrdcnl} and \eqref{plain_bound} yields
\begin{align}
\frac{\alpha}{2}\|x^*_i-\overline{x}_i\|_2^2 &\le\sum_{j\in\mathcal{V}}\frac{\alpha_j}{2}\|x^*_j-\overline{x}_j\|_2^2\\
&\le\sum_{j\in\mathcal{V}}\left(f_j(x^*_j)-f_j(\overline{x}_j)\right)\\
&\le\sum_{j\in\mathcal{V}}\left(f_j(0)-f_j(\overline{x}_j)\right)
\end{align}
for all $i\in\mathcal{V}$. Applying the triangle inequality to this, we get
\begin{align}
\|x^*_i\|_2 &\le\|x^*_i-\overline{x}_i\|_2+\|\overline{x}_i\|_2\\
&\le\left(\frac{2}{\alpha}\sum_{j\in\mathcal{V}}\left(f_j(0)-f_j(\overline{x}_j)\right)\right)^\frac{1}{2}+\|\overline{x}_i\|_2\\
&\le C.
\end{align}
This completes the proof.
\end{proof}

In the case where $f_i$ is the quadratic function $\frac{1}{2}x_i^\top A_ix_i-B_i^\top x_i$ with a positive definite matrix $A_i$, by using Lemma \ref{dcnlbound} a threshold value of $\gamma$ can be specified as follows.

\begin{corollary}
\label{q.e.p.p.}
Suppose that for all $i\in\mathcal{V}$, the matrix $A_i$ is positive definite and $f_i(x_i)=\frac{1}{2}x_i^\top A_ix_i-B_i^\top x_i$.
Let $C=\left(\frac{1}{\alpha}\sum_{i\in\mathcal{V}}B_i^\top A_i^{-1}B_i\right)^\frac{1}{2}+\max_{i\in\mathcal{V}}\|A_i^{-1}B_i\|_2$, where $\alpha=\min_{i\in\mathcal{V}}\lambda_{\min}(A_i)$.
If $\gamma>\sum_{i\in\mathcal{V}}({\|B_i\|}_{2}+2\lambda_{\max}(A_i)C)$, then any optimal solution (resp. locally optimal solution) of \eqref{dcnl} is also optimal (resp. locally optimal) to \eqref{obj:card}--\eqref{const:card}.
\end{corollary}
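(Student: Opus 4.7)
The plan is to check that the quadratic form $f_i(x_i)=\frac{1}{2}x_i^\top A_i x_i - B_i^\top x_i$ satisfies all the hypotheses of Lemma~\ref{dcnlbound} and Statement~2 of Theorem~\ref{e.p.p.} with explicit constants, and then just read off the threshold. Since the claim is essentially a specialization, the proof is expected to be a short computation rather than a genuine argument, and I do not anticipate any real obstacle; the only thing to be careful about is matching the constant $C$ appearing in the corollary with the one produced by Lemma~\ref{dcnlbound}.

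First, I would record the standard facts about a positive definite quadratic: $f_i$ is $\lambda_{\min}(A_i)$-strongly convex and $\lambda_{\max}(A_i)$-smooth, $\nabla f_i(x_i) = A_i x_i - B_i$, so $\|\nabla f_i(0)\|_2 = \|B_i\|_2$, and the unique minimizer is $\overline{x}_i = A_i^{-1}B_i$. A direct calculation gives
\begin{align}
f_i(0) - f_i(\overline{x}_i) \;=\; 0 - \Bigl(\tfrac{1}{2}(A_i^{-1}B_i)^\top A_i (A_i^{-1}B_i) - B_i^\top A_i^{-1}B_i\Bigr) \;=\; \tfrac{1}{2} B_i^\top A_i^{-1} B_i.
\end{align}
With $\alpha = \min_{i\in\mathcal{V}}\lambda_{\min}(A_i)$, substituting this into the bound of Lemma~\ref{dcnlbound} produces
\begin{align}
\Bigl(\tfrac{2}{\alpha}\textstyle\sum_{j\in\mathcal{V}}(f_j(0)-f_j(\overline{x}_j))\Bigr)^{1/2} + \max_{i\in\mathcal{V}}\|\overline{x}_i\|_2 \;=\; \Bigl(\tfrac{1}{\alpha}\textstyle\sum_{j\in\mathcal{V}} B_j^\top A_j^{-1}B_j\Bigr)^{1/2} + \max_{i\in\mathcal{V}}\|A_i^{-1}B_i\|_2,
\end{align}
which is precisely the constant $C$ appearing in the statement. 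Hence every locally optimal $\bm{x}^*$ of \eqref{dcnl} satisfies $\|x_i^*\|_2 \le C$ for all $i\in\mathcal{V}$.

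Finally, I would invoke Theorem~\ref{e.p.p.}. Because $f_i$ is convex and $L_i$-smooth with $L_i = \lambda_{\max}(A_i)$, the threshold in \eqref{e.p.p.inequality} reads
\begin{align}
\sum_{i\in\mathcal{V}}\bigl(\|\nabla f_i(0)\|_2 + 2 L_i C\bigr) \;=\; \sum_{i\in\mathcal{V}}\bigl(\|B_i\|_2 + 2\lambda_{\max}(A_i)\,C\bigr),
\end{align}
and the assumed lower bound on $\gamma$ is exactly this. Statement~1 of Theorem~\ref{e.p.p.} then gives the global-optimality part and Statement~2 gives the local-optimality part, so any (locally) optimal solution of \eqref{dcnl} is also (locally) optimal for the cardinality-constrained problem \eqref{obj:card}--\eqref{const:card}, completing the proof.
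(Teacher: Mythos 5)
Your proposal is correct and follows essentially the same route as the paper's own proof: identify $f_i$ as $\lambda_{\min}(A_i)$-strongly convex and $\lambda_{\max}(A_i)$-smooth with $\nabla f_i(0)=-B_i$ and minimizer $A_i^{-1}B_i$, feed these into Lemma~\ref{dcnlbound} to recover the stated $C$, and then apply both statements of Theorem~\ref{e.p.p.}. The only difference is that you spell out the computation $f_i(0)-f_i(\overline{x}_i)=\tfrac{1}{2}B_i^\top A_i^{-1}B_i$, which the paper leaves implicit.
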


\begin{proof}
Note that for any $i\in\mathcal{V}$, $f_i$ is $\lambda_{\min}(A_i)$-strongly convex and $\lambda_{\max}(A_i)$-smooth, and the gradient and minimizer of $f_i$ are given by $\nabla f_i(x_i)=A_ix_i-B_i$ and $A_i^{-1}B_i$, respectively. By applying Theorem \ref{e.p.p.} and Lemma \ref{dcnlbound}, we have the desired result.
\end{proof}

\section{Algorithm}
\label{sec:algorithm}
In this section, we develop two algorithms to approach a solution of NTL \eqref{dcnl} and generate a cluster path with respect to the cardinality parameter $K$.

\subsection{ADMM}
As the first algorithm, we consider Alternating Direction Method of Multipliers (ADMM) (e.g., \citet{boyd2011distributed}). 
For NL (including Convex Clustering), \citet{chi2015splitting} and \citet{hallac2015network} propose a method based on ADMM.

In this subsection, we deal with a more general problem, which includes NTL~\eqref{dcnl} as a special case. 
Similar to the trimmed Lasso function \eqref{def:tau}, let us define the function $T_K$ on $\mathbb{R}^{pm}$ by%(for $m\in\mathbb{Z}_{+}$):
\begin{align}
T_K((z_k)_{k\in[m]})=\|z_{(K+1)}\|_2+\cdots+\|z_{(m)}\|_2,
\end{align}
where $K\in\{0,1,\ldots,m\}$, $z_k\in\mathbb{R}^p$, and $\|z_{(k)}\|_2$ denotes the $k$-th largest component of $(\|z_1\|,...,\|z_m\|)\in\mathbb{R}^{m}$.
Note that $T_K$ is a continuous function. 
With this function, our target optimization problem is formulated as %follows:
\begin{align}\label{dcgl}
\underset{\bm{x}}{\mbox{minimize}} & \quad f(\bm{x})+\gamma T_K(D\bm{x}),
\end{align}
where $\gamma>0$, $f:\mathbb{R}^N\to\mathbb{R}$, and $D$ is a $pm\times N$ matrix. 
Note that if we set $f(\bm{x})=\sum_{i\in\mathcal{V}}f_i(x_i)$ and $D$ is a matrix such that $\bm{z}=D\bm{x}$ with $z_{\{i,j\}}=x_i-x_j$ for all $\{i,j\}\in\mathcal{E}$, then the problem \eqref{dcgl} is reduced to NTL~\eqref{dcnl}.

To apply ADMM, we rewrite the problem \eqref{dcgl} as the following equality-constrained formulation: 
\begin{align}
\underset{\bm{x},\bm{z}}{\mbox{minimize}} & \quad f(\bm{x})+\gamma T_K(\bm{z})
\label{obj:admmTK} \\
\text{subject to}                         & \quad \bm{z}=D\bm{x}.
\label{const:admmTK}
\end{align}

By introducing the dual variables $\bm{y}\in\mathbb{R}^{pm}$ for the equality constraints \eqref{const:admmTK}, the augmented Lagrangian function of \eqref{obj:admmTK}--\eqref{const:admmTK} is defined as
\begin{equation}
L_\rho(\bm{x},\bm{z},\bm{y})=f(\bm{x})+\gamma T_K(\bm{z})+\bm{y}^\top(\bm{z}-D\bm{x})+\frac{\rho}{2}{\|\bm{z}-D\bm{x}\|}_{2}^2,
\label{dcnal}
\end{equation}
with a positive constant $\rho$. 
ADMM is then described as Algorithm \ref{admm}.
\begin{algorithm}
\caption{ADMM for \eqref{dcgl}}
  \label{admm}
  \begin{algorithmic}
   \STATE {\bfseries Input:} $\bm{x}^0, \bm{y}^0, \rho>0$ and $t=0$.
   \REPEAT
   \STATE
\vspace{-7mm}
\begin{align}
   \bm{z}^{t+1}&\in\argmin_{\bm{z}}L_\rho(\bm{x}^t, \bm{z}, \bm{y}^t),\label{z-update}\\
   \bm{x}^{t+1}&\in\argmin_{\bm{x}}L_\rho(\bm{x}, \bm{z}^{t+1}, \bm{y}^t),\label{x-update}\\
   \bm{y}^{t+1}&=\bm{y}^t+\rho (\bm{z}^{t+1}-D\bm{x}^{t+1})\label{y-update}.
\end{align}
\vspace{-7mm}
   \STATE 
$t=t+1$
   \UNTIL Stopping criterion satisfied.
  \end{algorithmic}
\end{algorithm}
\subsection{Closed-form solution of Subproblem~\eqref{z-update}}
We can derive a closed-form solution of Subproblem~\eqref{z-update}. 
 First, it is easy to see that \eqref{z-update} is reduced to 
\begin{align}
\bm{z}^{t+1}\in
\mathrm{prox}_{\frac{\gamma}{\rho}T_K}(D\bm{x}^t-\frac{1}{\rho}\bm{y}^t)=
\argmin_{\bm{z}} &\Big\{ \frac{\gamma}{\rho}T_K(\bm{z})+\frac{1}{2}\|\bm{z}-(D\bm{x}^t-\frac{1}{\rho}\bm{y}^t)\|_2^2 \Big\},
\label{proximal}
\end{align}
where 
\[
\mathrm{prox}_f(\bm{x}):=\argmin_{\bm{z}}\Big\{f(\bm{z})+\frac{1}{2}\|\bm{z}-\bm{x}\|_2^2\Big\}
\]
is the proximal mapping of $\bm{x}$ with respect to $f$. 
Note that \eqref{proximal} may not be a singleton since $T_K$ is non-convex.

Though the minimization in \eqref{proximal} is a non-convex optimization, we can derive a closed-form solution, $\bm{z}^{t+1}$, in a similar manner to \citet{lu2018sparse} and \citet{bertsimas2017trimmed}.
For simplicity of notation, let $\bm{a}=D\bm{x}^t-\frac{1}{\rho}\bm{y}^t$. 
With this, the minimization in \eqref{proximal} can be equivalently rewritten as follows.
\begin{align}
\begin{split}
\underset{\bm{z}}{\min} \quad \gamma T_K(\bm{z})+\frac{\rho}{2}{\|\bm{z}-\bm{a}\|}_2^2 &= \quad \underset{\bm{z}}{\min} ~ \gamma\sum_{k=K+1}^{m}\|z_{(k)}\|_2+\frac{\rho}{2}\sum_{k=1}^m{\|z_k-a_k\|}_2^2 \\
  &= \quad \underset{\bm{z}}{\min} \quad \bigg\{\gamma\underset{\substack{I_k\in\{0, 1\}\\ \sum\limits_{k=1}^mI_k=m-K}}{\min}\Big\{\sum_{k=1}^m{\|z_k\|}_2I_k\Big\}+\frac{\rho}{2}\sum_{k=1}^m{\|z_k-a_k\|}_2^2\bigg\} \\
  &= \underset{\substack{I_k\in\{0, 1\}\\ \sum\limits_{k=1}^mI_k=m-K}}{\min} \bigg\{\underset{\bm{z}}{\min}\Big\{\gamma\sum_{k=1}^m{\|z_k\|}_2I_k+\frac{\rho}{2}\sum_{k=1}^m{\|z_k-a_k\|}_2^2\Big\}\bigg\} \\
  &= \underset{\substack{I_k\in\{0, 1\}\\ \sum\limits_{k=1}^mI_k=m-K}}{\min} \bigg\{\sum_{k=1}^m\underbrace{\underset{z_k}{\min}\left\{\gamma{\|z_k\|}_2I_k+\frac{\rho}{2}{\|z_k-a_k\|}_2^2\right\}}_{P_{(k)}}\bigg\},
\label{eq:prox_TK_end}
\end{split}
\end{align}
where the second equality is obtained by introducing integer variables $I_k$, which play a role as an indicator of the smallest $m-K$ components, and the third and fourth equalities are established by interchanging ``min" and ``min," or ``min" and ``summation," which is possible because of the separability with respect to $\bm{z}=(z_k)_{k\in[m]}$.
 For fixed $I_k$, we next evaluate the term 
\begin{align*}
P_{(k)}&:=\underset{z_k}{\min}\left\{\gamma{\|z_k\|}_2I_k+\frac{\rho}{2}{\|z_k-a_k\|}_2^2\right\}.
\end{align*}
To this end, let 
\[
P:=\underset{z}{\min}\Big\{\pi(z):=\gamma{\|z\|}_2\iota+\frac{\rho}{2}{\|z-a\|}_2^2\Big\}.
\]
for simplicity. 
Observe that when $\iota=0$, we have $\argmin_z \pi(z)=\{a\}$ and $P=0$; when $\iota=1$, we have
\begin{align*}
\argmin_z \pi(z)&=\mathrm{prox}_{\frac{\gamma}{\rho}{\|\cdot\|}_2}(a)=
\left\{
\begin{array}{cl}
 0,                                            & {\|a\|}_2\leq\frac{\gamma}{\rho}, \\
 \left(1-\frac{\gamma}{\rho{\|a\|}_2}\right)a, & {\|a\|}_2>\frac{\gamma}{\rho},
\end{array}
\right.
\end{align*}
and $P=\phi(\|a\|_2)$, 
where
\begin{align*}
\phi(t):=
\left\{
\begin{array}{cl}
 \frac{1}{2}t^2, & 0\leq t\leq\frac{\gamma}{\rho}, \\
 \frac{\gamma}{\rho}t-\frac{1}{2}{\left(\frac{\gamma}{\rho}\right)}^2, & t>\frac{\gamma}{\rho}.
\end{array}
\right.
\end{align*}
Accordingly, with $a=a_k$, the problem \eqref{eq:prox_TK_end} can be 
reduced to
\begin{align*}
\underset{\substack{I_k\in\{0, 1\}\\ \sum\limits_{k=1}^mI_k=m-K}}{\min} 
\sum_{k=1}^mP_{(k)}
&=
\underset{\substack{I_k\in\{0, 1\}\\ \sum\limits_{k=1}^mI_k=m-K}}{\min} 
\sum_{k=1}^mI_k\phi(\|a_k\|_2).
\end{align*}
Since $\phi(t)$ is increasing on $(0,\infty)$, an optimal solution of \eqref{proximal} is given by
\begin{align}
\begin{split}
z^{t+1}_k
&=\left\{
  \begin{array}{cl}
\displaystyle
    a_k, & \mbox{if }\|a_k\|_2\mbox{ is in the largest }K\mbox{ components of }(\|a_k\|_2)_{k\in[m]}, \\
\displaystyle
    \mathrm{prox}_{\frac{\gamma}{\rho}{\|\cdot\|}_2}(a_k), & \mbox{if }\|a_k\|_2\mbox{ is in the smallest }m-K\mbox{ components of }(\|a_k\|_2)_{k\in[m]}.
  \end{array}
  \right.
\end{split}
\label{eq:admm_z-formula}
\end{align}

\subsection{Proximal ADMM}
As for Subproblem~\eqref{x-update}, it is %also 
possible to derive a closed-form solution under restrictive assumptions (e.g., that of $f$ being a strictly convex quadratic function). 
However, it is often hard to obtain a closed-form solution.

To make the $\bm{x}$-update \eqref{x-update} at each iteration efficient, we consider \emph{Proximal ADMM} (\citet{li2015global}). 
Suppose that $f$ is $L$-smooth, so that the objective function of \eqref{x-update} is bounded above as
\begin{align*}
&L_\rho(\bm{x},\bm{z}^{t+1},\bm{y}^t)\\
&\leq f(\bm{x}^t)+\nabla f(\bm{x}^t)^\top(\bm{x}-\bm{x}^t)+\frac{L}{2}\|\bm{x}-\bm{x}^t\|_2^2
+(\bm{y}^t)^\top(\bm{z}^{t+1}-D\bm{x})+\frac{\rho}{2}\|\bm{z}^{t+1}-D\bm{x}\|_2^2
\end{align*}
by the inequality \eqref{inv_descent_lemma}.
The minimizer of the right-hand side is given by 
\begin{align}
\bm{x}^{t+1}=\left(I_N+\frac{\rho}{L}D^\top D\right)^{-1}\left(\bm{x}^{t}-\frac{1}{L}\nabla f(\bm{x}^{t})+\frac{1}{L}D^\top(\bm{y}^{t}+\rho\bm{z}^{t+1})\right),
\label{eq:closedform:x-step}
\end{align}
where $I_N$ is the $N$-dimensional identity matrix.
Note that the formula \eqref{eq:closedform:x-step} can be efficiently computed by a matrix-vector multiplication once the inverse on the right-hand side is fixed at the beginning of the algorithm. 

Proximal ADMM is equipped with a more general update rule that would include \eqref{eq:closedform:x-step} as a special case.
For a continuously differentiable function $\phi$ on $\mathbb{R}^N$, we define the Bregman distance %$B_\phi$
of $\bm{x}$ and $\bm{x}'$ by
\begin{align}
B_\phi(\bm{x}, \bm{x}')=\phi(\bm{x})-\phi(\bm{x}')-\nabla\phi(\bm{x}')^\top(\bm{x}-\bm{x}').
\end{align}
In Proximal ADMM, $\bm{x}^{t+1}$ is updated by
\begin{align}\label{eq:prox:x-step}
\bm{x}^{t+1}&\in\argmin_{\bm{x}}\left\{L_\rho(\bm{x}, \bm{z}^{t+1}, \bm{y}^t)+B_\phi(\bm{x}, \bm{x}^t)\right\}
\end{align}
in place of \eqref{x-update}. 
If we employ $\phi(\bm{x})=\frac{L}{2}\|\bm{x}\|_2^2-f(\bm{x})$, \eqref{eq:closedform:x-step} and \eqref{eq:prox:x-step} are equivalent.
Algorithm \ref{prox-admm} is the description of Proximal ADMM, where the subroutine \eqref{eq:prox:x-step} is employed for $x$-update as well as the proximal mapping \eqref{eq:admm_z-formula} of $T_K$ for $z$-update. 
\begin{algorithm}[H]
  \caption{Proximal ADMM for \eqref{dcgl}}         
  \label{prox-admm}
  \begin{algorithmic}
   \STATE {\bfseries Input:} $\bm{x}^0, \bm{y}^0, \rho>0$, and $t=0$.
   \REPEAT
   \STATE Let $\bm{a}^t:=D\bm{x}^t-\frac{1}{\rho}\bm{y}^t$, then $\bm{z}^{t+1}$ is determined by \eqref{eq:admm_z-formula} (i.e., \eqref{z-update}).
   \STATE $\bm{x}^{t+1}$ is determined by \eqref{eq:prox:x-step}.
   \STATE $\bm{y}^{t+1}$ is determined by \eqref{y-update}.
   \UNTIL Stopping criterion satisfied.
  \end{algorithmic}
 \end{algorithm}
Note that when we set $\phi(\bm{x})=0$, Proximal ADMM is reduced to the ordinary ADMM (Algorithm \ref{admm}).

\subsection{Convergence of Proximal ADMM}
The main goal of this subsection is to show that under practical assumptions Proximal ADMM converges to a local minimum of \eqref{dcgl} with the general penalty term. 
To show the convergence, we first give a formula of the directional derivative of $T_K$, which is a generalization of the result for the case where $p=1$, given by \citet{amir2020trimmed}.

\begin{lemma}\label{lemma:d-diff}
Let $\Lambda_1=\{k\mid\|z_k\|_2<\|z_{(K)}\|_2\}$ and $\Lambda_2=\{k\mid\|z_k\|_2=\|z_{(K)}\|_2\}$. The directional derivative of $T_K$ at $\bm{z}\in\mathbb{R}^{pm}$ in the direction $\bm{v}\in\mathbb{R}^{pm}$ is given by
\begin{align}\label{eq:d-diff}
\mathrm{d}T_K(\bm{z};\bm{v})=\sum_{k\in\Lambda_1}\delta(z_k,v_k)^\top v_k+\min_{\substack{\Lambda\subset\Lambda_2 \\ |\Lambda|=m-K-|\Lambda_1|}}\sum_{k\in\Lambda}\delta(z_k,v_k)^\top v_k,
\end{align}
where
\begin{align}
\delta(z,v):=
\left\{
\begin{array}{cl}
 \frac{z}{\|z\|_2}, & z\neq0,\\
 \frac{v}{\|v\|_2}, & z=0, v\neq0,\\
 0, & z=0, v=0
\end{array}
\right.
\end{align}
and $\|z_{(0)}\|_2=\infty$.
\end{lemma}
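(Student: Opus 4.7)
The plan is to represent $T_K$ as the pointwise minimum of finitely many simpler directionally differentiable functions, so that the directional derivative reduces to a known calculus of minima. Concretely, since selecting the $m-K$ smallest of $\|z_1\|_2,\ldots,\|z_m\|_2$ is a combinatorial selection, we can write
\begin{equation}
T_K(\bm{z})=\min_{\substack{S\subset[m]\\|S|=m-K}} g_S(\bm{z}),\qquad g_S(\bm{z}):=\sum_{k\in S}\|z_k\|_2.
\end{equation}
A subset $S$ attains this minimum exactly when it contains every index whose norm is strictly below the $K$-th largest norm, and fills the remaining slots from indices whose norm equals $\|z_{(K)}\|_2$. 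In the notation of the lemma, the active set is therefore
\begin{equation}
\mathcal{S}^\star(\bm{z})=\Bigl\{\Lambda_1\cup\Lambda\;:\;\Lambda\subset\Lambda_2,\;|\Lambda|=m-K-|\Lambda_1|\Bigr\}.
\end{equation}
The convention $\|z_{(0)}\|_2=\infty$ handles the trivial case $K=0$, for which $\Lambda_1=[m]$ and $\Lambda_2=\emptyset$.

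Next I would apply the standard min-rule for directional derivatives of a pointwise minimum of finitely many continuous, directionally differentiable functions. By continuity of each $g_S$ together with the strict inequality $g_S(\bm{z})>T_K(\bm{z})$ for $S\notin\mathcal{S}^\star(\bm{z})$, there exists $\eta_0>0$ such that for all $\eta\in(0,\eta_0)$ the minimum defining $T_K(\bm{z}+\eta\bm{v})$ is attained only in $\mathcal{S}^\star(\bm{z})$. Since $g_S(\bm{z})=T_K(\bm{z})$ whenever $S\in\mathcal{S}^\star(\bm{z})$, this yields
\begin{equation}
\frac{T_K(\bm{z}+\eta\bm{v})-T_K(\bm{z})}{\eta}=\min_{S\in\mathcal{S}^\star(\bm{z})}\frac{g_S(\bm{z}+\eta\bm{v})-g_S(\bm{z})}{\eta},
\end{equation}
and letting $\eta\searrow0$ gives $\mathrm{d}T_K(\bm{z};\bm{v})=\min_{S\in\mathcal{S}^\star(\bm{z})}\mathrm{d}g_S(\bm{z};\bm{v})$, provided each $\mathrm{d}g_S(\bm{z};\bm{v})$ exists.

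For the inner derivatives, $g_S$ is a sum over $k\in S$ of the function $z_k\mapsto\|z_k\|_2$, whose directional derivative at $z$ in direction $v$ is $z^\top v/\|z\|_2$ when $z\neq0$ and $\|v\|_2$ when $z=0$. Both cases are encoded by $\delta(z,v)^\top v$ as defined, so $\mathrm{d}g_S(\bm{z};\bm{v})=\sum_{k\in S}\delta(z_k,v_k)^\top v_k$. Substituting the description of $\mathcal{S}^\star(\bm{z})$ and peeling off the common contribution $\sum_{k\in\Lambda_1}\delta(z_k,v_k)^\top v_k$ from all active $S$ leaves the minimization only over the ambiguous part $\Lambda\subset\Lambda_2$ with $|\Lambda|=m-K-|\Lambda_1|$, which is exactly the right-hand side of \eqref{eq:d-diff}.

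The only delicate point is justifying the min-rule, i.e.\ the exchange of the limit $\eta\searrow0$ with the finite minimum; the argument above relies on the fact that $\mathcal{S}^\star(\bm{z})$ is stable under sufficiently small perturbations, which follows from the continuity of each $g_S$ and the finiteness of the index set. Everything else is a direct application of the well-known directional derivative of the Euclidean norm at and away from the origin.
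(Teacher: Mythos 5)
Your proof is correct and follows essentially the same route as the paper's: both arguments show that for sufficiently small $\eta$ the minimizing index sets of $T_K(\bm{z}+\eta\bm{v})$ are exactly those active at $\bm{z}$ (namely $\Lambda_1\cup\Lambda$ with $\Lambda\subset\Lambda_2$, $|\Lambda|=m-K-|\Lambda_1|$), subtract the common value $T_K(\bm{z})$, exchange the limit $\eta\searrow0$ with the finite minimum, and invoke the directional derivative of the Euclidean norm at and away from the origin. The only cosmetic difference is that you package $T_K$ as a pointwise minimum of the functions $g_S$ and argue stability of the active set via the strict gap $g_S(\bm{z})>T_K(\bm{z})$ for inactive $S$, whereas the paper works directly with the perturbed index sets $\Lambda_1^\eta$ and $\Lambda_2^\eta$; the two justifications are equivalent.
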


\begin{proof}
First, note that the equation
\begin{align}\label{eq:anyrepTK}
T_K(\bm{z})=\sum_{k\in\Lambda_1}\|z_k\|_2+\sum_{k\in\Lambda}\|z_k\|_2
\end{align}
holds for any $\Lambda\subset\Lambda_2$ such that $|\Lambda|=m-K-|\Lambda_1|$. Let
\begin{align}
\Lambda_1^\eta:=\{k\mid\|z_k+\eta v_k\|_2<\|(z+\eta v)_{(K)}\|_2\},\\
\Lambda_2^\eta:=\{k\mid\|z_k+\eta v_k\|_2=\|(z+\eta v)_{(K)}\|_2\}.
\end{align}
Observe that there exists a positive number $\varepsilon$ such that $\|z_k+\eta v_k\|_2<\|(z+\eta v)_{(K)}\|_2$ for all $k\in\Lambda_1$ and $\|z_k+\eta v_k\|_2>\|(z+\eta v)_{(K)}\|_2$ for all $k\in(\Lambda_1\cup\Lambda_2)^c$ whenever $0<\eta<\varepsilon$ because of the continuity of $\ell_2$-norm. 
Hence $\Lambda_1\subset\Lambda_1^\eta$ and $\Lambda_1^\eta\cup\Lambda_2^\eta\subset\Lambda_1\cup\Lambda_2$ hold whenever $0<\eta<\varepsilon$. From this, we obtain
\begin{align}\label{eq:limrepTK}
T_K(\bm{z}+\eta\bm{v})=\sum_{k\in\Lambda_1}\|z_k+\eta v_k\|_2+\min_{\substack{\Lambda\subset\Lambda_2 \\ |\Lambda|=m-K-|\Lambda_1|}}\sum_{k\in\Lambda}\|z_k+\eta v_k\|_2,
\end{align}
for $\eta\in(0,\varepsilon)$. 
Combining \eqref{eq:anyrepTK} and \eqref{eq:limrepTK} yields
\begin{align}\label{eq:difrepTK}
T_K(\bm{z}+\eta\bm{v})=\sum_{k\in\Lambda_1}(\|z_k+\eta v_k\|_2-\|z_k\|_2)+\min_{\substack{\Lambda\subset\Lambda_2 \\ |\Lambda|=m-K-|\Lambda_1|}}\sum_{k\in\Lambda}(\|z_k+\eta v_k\|_2-\|z_k\|_2).
\end{align}
Furthermore, taking the limit $\eta\searrow0$, for any $k\in[m]$, we have
\begin{align}
\frac{\|z_k+\eta v_k\|_2-\|z_k\|_2}{\eta}\rightarrow
\left\{
\begin{array}{cl}
 \frac{z_k}{\|z_k\|_2}^\top v_k, & z_k\neq0,\\
 \|v_k\|_2, & z_k=0, v_k\neq0,\\
 0, & z_k=0, v_k=0,
\end{array}
\right.
\end{align}
that is, $\frac{\|z_k+\eta v_k\|_2-\|z_k\|_2}{\eta}\rightarrow\delta(z_k,v_k)^\top v_k$. Thus, we obtain
\begin{align}
\mathrm{d}T_K(\bm{z};\bm{v})
=&\lim_{\eta\searrow0}\frac{T_K(\bm{z}+\eta\bm{v})-T_K(\bm{z})}{\eta}\\
=&\lim_{\eta\searrow0}\frac{\sum\limits_{k\in\Lambda_1}(\|z_k+\eta v_k\|_2-\|z_k\|_2)}{\eta}+\lim_{\eta\searrow0}\frac{\min\limits_{\substack{\Lambda\subset\Lambda_2 \\ |\Lambda|=m-K-|\Lambda_1|}}\sum\limits_{k\in\Lambda}(\|z_k+\eta v_k\|_2-\|z_k\|_2)}{\eta}\\
=&\sum_{k\in\Lambda_1}\lim_{\eta\searrow0}\frac{(\|z_k+\eta v_k\|_2-\|z_k\|_2)}{\eta}+\min_{\substack{\Lambda\subset\Lambda_2 \\ |\Lambda|=m-K-|\Lambda_1|}}\sum_{k\in\Lambda}\lim_{\eta\searrow0}\frac{(\|z_k+\eta v_k\|_2-\|z_k\|_2)}{\eta}\\
=&\sum_{k\in\Lambda_1}\delta(z_k,v_k)^\top v_k+\min_{\substack{\Lambda\subset\Lambda_2 \\ |\Lambda|=m-K-|\Lambda_1|}}\sum_{k\in\Lambda}\delta(z_k,v_k)^\top v_k,
\end{align}
where the third equality is established by interchanging ``min" and ``limit," which is possible because $\{\Lambda\subset\Lambda_2\mid |\Lambda|=m-K-|\Lambda_1|\}$ is a finite set.
\end{proof}

The following result claims that stationary points and local minima of \eqref{dcgl} are equivalent in \eqref{dcgl} when $f$ is differentiable convex.

\begin{proposition}\label{prop:d-stational<->l-opt}
Suppose that $f$ is a differentiable convex function. If $\bm{x}^*$ is a directional-stationary point of \eqref{dcgl}, then %$\bm{x}^*$ 
it is locally optimal to \eqref{dcgl}.
\end{proposition}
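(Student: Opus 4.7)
The plan is to reduce local optimality of $\bm{x}^*$ in the non-convex problem \eqref{dcgl} to global optimality in each member of a finite family of convex surrogate problems. To this end, I would first use the representation
\begin{align}
T_K(\bm{z})=\min_{\Lambda\subset[m],\,|\Lambda|=m-K}\sum_{k\in\Lambda}\|z_k\|_2,
\end{align}
set $\bm{z}^*:=D\bm{x}^*$, and introduce $\Lambda_1,\Lambda_2$ exactly as in Lemma~\ref{lemma:d-diff}. By continuity of $\|\cdot\|_2$ and the fact that indices outside $\Lambda_1\cup\Lambda_2$ are strictly separated in norm from the threshold $\|z^*_{(K)}\|_2$, there is a neighborhood $U$ of $\bm{x}^*$ on which every minimizing $\Lambda$ in the representation above satisfies $\Lambda_1\subset\Lambda\subset\Lambda_1\cup\Lambda_2$ and $|\Lambda|=m-K$. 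Writing $\mathcal{L}$ for this finite collection and $g_\Lambda(\bm{x}):=f(\bm{x})+\gamma\sum_{k\in\Lambda}\|(D\bm{x})_k\|_2$, we thus obtain $F(\bm{x}):=f(\bm{x})+\gamma T_K(D\bm{x})=\min_{\Lambda\in\mathcal{L}}g_\Lambda(\bm{x})$ for every $\bm{x}\in U$. A direct computation also gives $g_\Lambda(\bm{x}^*)=F(\bm{x}^*)$ for each $\Lambda\in\mathcal{L}$, since every such $\Lambda$ selects $\Lambda_1$ together with $m-K-|\Lambda_1|$ indices from $\Lambda_2$, on which $\|z^*_k\|_2$ takes the constant value $\|z^*_{(K)}\|_2$.

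Next I would observe that each $g_\Lambda$ is convex, as $f$ is convex and each $\bm{x}\mapsto\|(D\bm{x})_k\|_2$ is convex. Interchanging limit and minimum over the finite family $\mathcal{L}$---valid because all $g_\Lambda$ agree at $\bm{x}^*$---then yields
\begin{align}
\mathrm{d}F(\bm{x}^*;\bm{v})=\min_{\Lambda\in\mathcal{L}}\mathrm{d}g_\Lambda(\bm{x}^*;\bm{v})\qquad\text{for all }\bm{v}\in\mathbb{R}^N,
\end{align}
which is consistent with the explicit formula of Lemma~\ref{lemma:d-diff} combined with the chain rule for directional derivatives.

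From d-stationarity of $\bm{x}^*$, $\mathrm{d}F(\bm{x}^*;\bm{v})\ge 0$ for every $\bm{v}$. Since each $\mathrm{d}g_\Lambda(\bm{x}^*;\bm{v})$ is at least the minimum over $\Lambda$, we deduce $\mathrm{d}g_\Lambda(\bm{x}^*;\bm{v})\ge 0$ for every $\Lambda\in\mathcal{L}$ and every $\bm{v}$. Standard convex analysis then ensures $\bm{x}^*$ is a global minimizer of the convex function $g_\Lambda$ for each $\Lambda\in\mathcal{L}$. Hence, for $\bm{x}\in U$,
\begin{align}
F(\bm{x})=\min_{\Lambda\in\mathcal{L}}g_\Lambda(\bm{x})\ge\min_{\Lambda\in\mathcal{L}}g_\Lambda(\bm{x}^*)=F(\bm{x}^*),
\end{align}
which is exactly local optimality of $\bm{x}^*$. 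The main obstacle I anticipate is establishing the local identity $F=\min_{\Lambda\in\mathcal{L}}g_\Lambda$ on $U$ cleanly; this reduces to a short continuity-plus-strict-separation argument on the norm values $\|(D\bm{x})_k\|_2$ at points near $\bm{x}^*$, after which everything else is routine.
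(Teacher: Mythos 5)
Your proposal is correct, and it reaches the conclusion by a route that differs from the paper's in how convexity is deployed, though both arguments rest on the same local structure of $T_K$. The paper argues by contradiction: assuming a sequence $\bm{x}^t\to\bm{x}^*$ with strictly smaller objective value, it uses the decomposition of $T_K(D\bm{x}^t)-T_K(D\bm{x}^*)$ over $\Lambda_1$ and $\Lambda\subset\Lambda_2$ (the same index sets you introduce), lower-bounds each norm difference by the first-order term $\delta(z',z-z')^\top(z-z')$, adds the gradient inequality for the convex $f$, and lands on $0>\mathrm{d}(f+\gamma T_K\circ D)(\bm{x}^*;\bm{x}^t-\bm{x}^*)$, contradicting stationarity. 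You instead package the objective locally as $F=\min_{\Lambda\in\mathcal{L}}g_\Lambda$ over a finite family of convex surrogates that all agree with $F$ at $\bm{x}^*$, identify $\mathrm{d}F(\bm{x}^*;\cdot)$ with $\min_\Lambda\mathrm{d}g_\Lambda(\bm{x}^*;\cdot)$, and conclude that $\bm{x}^*$ globally minimizes every $g_\Lambda$, hence minimizes $F$ on the neighborhood. Your step establishing that every minimizing $\Lambda$ near $\bm{x}^*$ lies between $\Lambda_1$ and $\Lambda_1\cup\Lambda_2$ is exactly the strict-separation argument the paper uses to derive its local representation of $T_K$ (equation \eqref{eq:limrepTK} in the proof of Lemma \ref{lemma:d-diff}), so that part is not new; what your route buys is a direct (non-contradiction) proof that avoids the explicit subgradient estimate and makes transparent \emph{why} d-stationarity suffices here, namely that the objective is locally a finite minimum of convex functions tied together at $\bm{x}^*$. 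The paper's route, in exchange, produces the quantitative lower bound on $T_K(D\bm{x}^t)-T_K(D\bm{x}^*)$ that is reused in spirit elsewhere in its convergence analysis.
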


\begin{proof}
To prove the proposition by contradiction, suppose that $\bm{x}^*$ is not a locally optimal solution of \eqref{dcgl}. 
Then there exists a sequence $\{\bm{x}^t\}$ such that $\bm{x}^t\rightarrow\bm{x}^*$ and $f(\bm{x}^*)+\gamma T_K(D\bm{x}^*)>f(\bm{x}^t)+\gamma T_K(D\bm{x}^t)$ for all $t$.
Setting
\begin{align}
\Lambda_1&:=\{k\mid\|(D\bm{x}^*)_k\|_2<\|(D\bm{x}^*)_{(K)}\|_2\},\\
\Lambda_2&:=\{k\mid\|(D\bm{x}^*)_k\|_2=\|(D\bm{x}^*)_{(K)}\|_2\},\\
\Lambda_1^t&:=\{k\mid\|(D\bm{x}^t)_k\|_2<\|(D\bm{x}^t)_{(K)}\|_2\},\\%\text{ and}\\
\Lambda_2^t&:=\{k\mid\|(D\bm{x}^t)_k\|_2=\|(D\bm{x}^t)_{(K)}\|_2\},
\end{align}
we have
\begin{align}
\lefteqn{T_K(D\bm{x}^t)-T_K(D\bm{x}^*)}\\
&=\sum_{k\in\Lambda_1}(\|(D\bm{x}^t)_k\|_2-\|(D\bm{x}^*)_k\|_2)+\min_{\substack{\Lambda\subset\Lambda_2 \\ |\Lambda|=m-K-|\Lambda_1|}}\sum_{k\in\Lambda}(\|(D\bm{x}^t)_k\|_2-\|(D\bm{x}^*)_k\|_2),
\end{align}
since $\Lambda_1\subset\Lambda_1^t$ and $\Lambda_1^t\cup\Lambda_2^t\subset\Lambda_1\cup\Lambda_2$ hold for sufficiently large $t$ as in the proof of Lemma \ref{lemma:d-diff}.
Noting that for any $z, z'\in\mathbb{R}^p$,
\begin{align}
\|z\|_2-\|z'\|_2\geq\delta(z',z-z')^\top(z-z'),
\end{align}
we have
\begin{align}
&T_K(D\bm{x}^t)-T_K(D\bm{x}^*)\\
&\geq\sum_{k\in\Lambda_1}\delta((D\bm{x}^*)_k,(D\bm{v})_k)^\top(D\bm{v})_k+\min_{\substack{\Lambda\subset\Lambda_2 \\ |\Lambda|=m-K-|\Lambda_1|}}\sum_{k\in\Lambda}\delta((D\bm{x}^*)_k,(D\bm{v})_k)^\top(D\bm{v})_k,
\end{align}
where $\bm{v}=\bm{x}^t-\bm{x}^*$.
This as well as the convexity of $f$ and Lemma \ref{lemma:d-diff} yield
\begin{align}
0 &>f(\bm{x}^t)+\gamma T_K(D\bm{x}^t)-(f(\bm{x}^*)+\gamma T_K(D\bm{x}^*))\\
&\geq\nabla f(\bm{x}^*)^\top\bm{v}+\gamma\Big[\sum_{k\in\Lambda_1}\delta((D\bm{x}^*)_k,(D\bm{v})_k)^\top(D\bm{v})_k+
%\!\!\!
\min_{\substack{\Lambda\subset\Lambda_2 \\ |\Lambda|=m-K-|\Lambda_1|}}
\sum_{k\in\Lambda}\delta((D\bm{x}^*)_k,(D\bm{v})_k)^\top(D\bm{v})_k\Big]\\
&=\nabla f(\bm{x}^*)^\top\bm{v}+\gamma \mathrm{d}T_K(D\bm{x}^*;D\bm{v})\\
&=\mathrm{d}(f+\gamma T_K\circ D)(\bm{x}^*;\bm{v}),
\end{align}
which contradicts the fact that $\bm{x}^*$ is a stationary point of \eqref{dcgl}.
\end{proof}

The rest of this subsection is devoted to convergence results of Proximal ADMM, for which proofs are based on ideas of \citet{li2015global}.
The differences between their results and ours are summarized as follow:
\begin{itemize}
\item To apply Proposition \ref{prop:d-stational<->l-opt}, we will prove the convergence to a directional-stationary point. On the other hand, they prove convergence to a \emph{limiting-stationary point}, which is a weaker stationary point than a directional-stationary point (see e.g., \citet[pp.3350--3351]{cui2018composite}).
\item They assume the second-order differentiability of $f$, while we only assume the first-order differentiability of $f$.
\end{itemize}

Let us start with a result under a bit stronger assumption. 
\begin{proposition}\label{prop:weak-convergence}
Suppose that $f$ is convex, and $f$ and $\phi$ are continuously differentiable. If the sequence $\{(\bm{x}^t,\bm{z}^t,\bm{y}^t)\}$ generated from Proximal ADMM has a partial limit $(\bm{x}^*,\bm{z}^*,\bm{y}^*)$ and $(\|\bm{x}^{t+1}-\bm{x}^t\|_2,\|\bm{z}^{t+1}-\bm{z}^t\|_2,\|\bm{y}^{t+1}-\bm{y}^t\|_2)$ converges to $(0,0,0)$, then $\bm{x}^*$ is a local minimum of \eqref{dcgl}.
\end{proposition}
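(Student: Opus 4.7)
The plan is to invoke Proposition \ref{prop:d-stational<->l-opt}: since $f$ is convex and differentiable, it suffices to show that $\bm{x}^*$ is a directional-stationary point of \eqref{dcgl}. Let $\{t_k\}$ be a subsequence along which $(\bm{x}^{t_k},\bm{z}^{t_k},\bm{y}^{t_k})\to(\bm{x}^*,\bm{z}^*,\bm{y}^*)$; because the successive differences vanish, the shifted sequence $(\bm{x}^{t_k+1},\bm{z}^{t_k+1},\bm{y}^{t_k+1})$ also tends to $(\bm{x}^*,\bm{z}^*,\bm{y}^*)$. From the $\bm{y}$-update \eqref{y-update}, $\bm{z}^{t+1}-D\bm{x}^{t+1}=(\bm{y}^{t+1}-\bm{y}^t)/\rho\to 0$, which gives $\bm{z}^*=D\bm{x}^*$. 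Next, I would write the first-order optimality condition for the $\bm{x}$-subproblem \eqref{eq:prox:x-step},
\[
\nabla f(\bm{x}^{t+1}) - D^\top\bm{y}^t - \rho D^\top(\bm{z}^{t+1}-D\bm{x}^{t+1}) + \nabla\phi(\bm{x}^{t+1}) - \nabla\phi(\bm{x}^t) = 0,
\]
substitute \eqref{y-update} to collapse the middle two terms into $D^\top\bm{y}^{t+1}$, and pass to the limit along $\{t_k\}$ using the continuity of $\nabla f$ and $\nabla\phi$, obtaining $\nabla f(\bm{x}^*) = D^\top\bm{y}^*$.

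The central step is to extract a one-sided inequality for $T_K$ at $D\bm{x}^*$ without differentiating through the potentially jumping index sets $\Lambda_1,\Lambda_2$ of Lemma \ref{lemma:d-diff}. Since the closed-form \eqref{eq:admm_z-formula} makes $\bm{z}^{t+1}$ a \emph{global} minimizer of the $\bm{z}$-subproblem, the inequality
\[
\gamma T_K(\bm{z}^{t+1}) + (\bm{y}^t)^\top\bm{z}^{t+1} + \frac{\rho}{2}\|\bm{z}^{t+1}-D\bm{x}^t\|_2^2 \le \gamma T_K(\bm{z}) + (\bm{y}^t)^\top\bm{z} + \frac{\rho}{2}\|\bm{z}-D\bm{x}^t\|_2^2
\]
holds for every $\bm{z}$. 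Passing to the limit along $\{t_k\}$, using the stated continuity of $T_K$ and of $\|\cdot\|_2^2$ together with $\bm{z}^*=D\bm{x}^*$, yields
\[
\gamma T_K(D\bm{x}^*) + (\bm{y}^*)^\top D\bm{x}^* \le \gamma T_K(\bm{z}) + (\bm{y}^*)^\top \bm{z} + \frac{\rho}{2}\|\bm{z}-D\bm{x}^*\|_2^2
\]
for every $\bm{z}$. Substituting $\bm{z}=D\bm{x}^*+\eta\bm{w}$ with $\eta>0$ and arbitrary $\bm{w}\in\mathbb{R}^{pm}$, dividing by $\eta$, and letting $\eta\searrow 0$ then produces
\[
\gamma\,\mathrm{d}T_K(D\bm{x}^*;\bm{w}) + (\bm{y}^*)^\top\bm{w} \ge 0 \quad \text{for every } \bm{w}\in\mathbb{R}^{pm}.
\]

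Finally, for an arbitrary $\bm{v}\in\mathbb{R}^N$, the directional derivative of the objective of \eqref{dcgl} at $\bm{x}^*$ equals
\[
\mathrm{d}(f+\gamma T_K\circ D)(\bm{x}^*;\bm{v}) = \nabla f(\bm{x}^*)^\top\bm{v} + \gamma\,\mathrm{d}T_K(D\bm{x}^*;D\bm{v}),
\]
and plugging in $\nabla f(\bm{x}^*)=D^\top\bm{y}^*$ and applying the previous display with $\bm{w}=D\bm{v}$ shows this quantity is non-negative. Hence $\bm{x}^*$ is a directional-stationary point of \eqref{dcgl}, and Proposition \ref{prop:d-stational<->l-opt} delivers the claimed local optimality. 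The main obstacle is the non-smoothness of $T_K$: a naive limit of $\mathrm{d}T_K(\bm{z}^{t+1};\cdot)$ is delicate because the index sets can jump, so the key idea is to defer the directional-derivative step until \emph{after} passing to the limit, which is exactly what the global optimality of the $\bm{z}$-proximal step afforded by \eqref{eq:admm_z-formula} buys us.
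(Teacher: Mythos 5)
Your proposal is correct and follows essentially the same route as the paper's proof: use the $\bm{y}$-update to get $\bm{z}^*=D\bm{x}^*$, pass the $\bm{x}$-step optimality condition to the limit to get $\nabla f(\bm{x}^*)=D^\top\bm{y}^*$, exploit global optimality of the $\bm{z}$-step against perturbations $D\bm{x}^*+\eta\bm{w}$ in the limit, and only then divide by $\eta$ to recover directional stationarity before invoking Proposition \ref{prop:d-stational<->l-opt}. The only cosmetic difference is that you state the limiting variational inequality for all $\bm{z}$ and then specialize, whereas the paper substitutes $\bm{z}^*+\eta D\bm{v}$ before taking the limit.
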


Note that from Proposition \ref{prop:weak-convergence}, if the whole sequence $\{(\bm{x}^t,\bm{z}^t,\bm{y}^t)\}$ converges to $(\bm{x}^*,\bm{z}^*,\bm{y}^*)$, then $\bm{x}^*$ is a local minimum of \eqref{dcgl}.

\begin{proof}
Let $\{(\bm{x}^{t_i},\bm{z}^{t_i},\bm{y}^{t_i})\}$ be a subsequence of $\{(\bm{x}^t,\bm{z}^t,\bm{y}^t)\}$ that converges to $(\bm{x}^*,\bm{z}^*,\bm{y}^*)$.
From the fact that $(\|\bm{x}^{t+1}-\bm{x}^t\|_2,\|\bm{z}^{t+1}-\bm{z}^t\|_2,\|\bm{y}^{t+1}-\bm{y}^t\|_2)$ converges to $(0,0,0)$, the subsequence $\{(\bm{x}^{t_i+1},\bm{z}^{t_i+1},\bm{y}^{t_i+1})\}$ also converges to $(\bm{x}^*,\bm{z}^*,\bm{y}^*)$.
By the relation \eqref{y-update}, the equation
\begin{align}
\bm{y}^{t_i+1}&=\bm{y}^{t_i}+\rho (\bm{z}^{t_i+1}-D\bm{x}^{t_i+1})
\end{align}
holds. Letting $i\rightarrow\infty$ yields
\begin{align}\label{eq:lim-eq-3}
\bm{z}^*=D\bm{x}^*.
\end{align}
Taking the limit of the optimality condition of \eqref{eq:prox:x-step}, we have
\begin{align}
\nabla f(\bm{x}^{t_i+1})+\rho D^\top\left(D\bm{x}^{t_i+1}-\bm{z}^{t_i+1}-\frac{1}{\rho}\bm{y}^{t_i}\right)+\nabla\phi(\bm{x}^{t_i+1})-\nabla\phi(\bm{x}^{t_i})=0,
\end{align}
and combining it with \eqref{eq:lim-eq-3} and continuity of $\nabla f$ and $\nabla\phi$, we obtain
\begin{align}\label{eq:lim-eq-2}
\nabla f(\bm{x}^*)=D^\top\bm{y}^*.
\end{align}
Since $\bm{z}^{t_i+1}$ is optimal to \eqref{z-update}, the inequality
\begin{align}\label{eq:eq-1}
\begin{split}
&\gamma T_K(\bm{z}^{t_i+1})+(\bm{y}^{t_i})^\top\bm{z}^{t_i+1}+\frac{\rho}{2}\|\bm{z}^{t_i+1}-D\bm{x}^{t_i}\|_2^2\\
&\leq \gamma T_K(\bm{z}^*+\eta D\bm{v})+(\bm{y}^{t_i})^\top(\bm{z}^*+\eta D\bm{v})+\frac{\rho}{2}\|\bm{z}^*+\eta D\bm{v}-D\bm{x}^{t_i}\|_2^2
\end{split}
\end{align}
holds for any $\eta>0$ and $\bm{v}\in\mathbb{R}^N$.
By the continuity of $T_K$ and \eqref{eq:lim-eq-3}, letting $i\rightarrow\infty$ yields
\begin{align}\label{eq:lim-eq-1}
\begin{split}
\gamma T_K(D\bm{x}^*)+(\bm{y}^*)^\top D\bm{x}^* \leq \gamma T_K(D\bm{x}^*+\eta D\bm{v})+(\bm{y}^*)^\top(D\bm{x}^*+\eta D\bm{v})+\frac{\rho}{2}\|\eta D\bm{v}\|_2^2.
\end{split}
\end{align}
Combining this with \eqref{eq:lim-eq-2}, we see that
\begin{align}\label{eq:pre-d-stational}
\begin{split}
&\eta\nabla f(\bm{x}^*)^\top\bm{v}+\gamma T_K(D(\bm{x}^*+\eta\bm{v}))-\gamma T_K(D\bm{x}^*)+\eta^2\frac{\rho}{2}\|D\bm{v}\|_2^2\\
&= \eta(D^\top\bm{y}^*)^\top\bm{v}+\gamma T_K(D\bm{x}^*+\eta D\bm{v})-\gamma T_K(D\bm{x}^*)+\eta^2\frac{\rho}{2}\|D\bm{v}\|_2^2\\
&= (\bm{y}^*)^\top(\eta D\bm{v})+\gamma T_K(D\bm{x}^*+\eta D\bm{v})-\gamma T_K(D\bm{x}^*)+\frac{\rho}{2}\|\eta D\bm{v}\|_2^2\\
&\geq 0.
\end{split}
\end{align}
By dividing both sides of this inequality by $\eta$ and taking the limit with $\eta\searrow0$, we obtain
\begin{align}\label{eq:d-stational}
\begin{split}
\mathrm{d}(f+\gamma T_K\circ D)(\bm{x}^*;\bm{v})= \nabla f(\bm{x}^*)^\top\bm{v}+\gamma \mathrm{d}(T_K\circ D)(\bm{x}^*;\bm{v})\geq 0,
\end{split}
\end{align}
which implies that $\bm{x}^*$ is a stationary point of \eqref{dcgl}. Since $f$ is a differentiable convex function, $\bm{x}^*$ is shown to be locally optimal to \eqref{dcgl} by Proposition \ref{prop:d-stational<->l-opt}.
\end{proof}

By adding assumptions, we have a stronger convergence result than Proposition \ref{prop:weak-convergence}.

\begin{theorem}\label{thm:strong-convergence}
Suppose that the following assumptions hold:
\begin{enumerate}[({A}1)]
\item $D$ is surjective, that is, $\sigma:=\lambda_{\min}(DD^\top)>0$;
\item $f$ is convex;
\item $f+\phi$ is $L_1$-smooth;
\item $f+\phi+\frac{\rho}{2}\|D\cdot\|_2^2$ is $\alpha_1$-strongly convex;
\item $\phi$ is $L_2$-smooth and $\alpha_2$-strongly convex;
\item There exists $0<r<1$ such that $\rho>\frac{2}{\sigma(\alpha_1+\alpha_2)}\left(\frac{L_1^2}{r}+\frac{L_2^2}{1-r}\right)$,
\end{enumerate}
where we allow $L_1, L_2, \alpha_1, \alpha_2$ to be $0$, but we must have $\alpha_1+\alpha_2>0$.
If the sequence $\{(\bm{x}^t,\bm{z}^t,\bm{y}^t)\}$ generated from Proximal ADMM has a partial limit $(\bm{x}^*,\bm{z}^*,\bm{y}^*)$, then $\bm{x}^*$ is a local minimum of \eqref{dcgl}.
\end{theorem}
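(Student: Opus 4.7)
The plan is to reduce Theorem \ref{thm:strong-convergence} to Proposition \ref{prop:weak-convergence}. The proposition already delivers local optimality from a partial limit provided we can additionally show that $\|\bm{x}^{t+1}-\bm{x}^t\|_2$, $\|\bm{z}^{t+1}-\bm{z}^t\|_2$, and $\|\bm{y}^{t+1}-\bm{y}^t\|_2$ all tend to $0$. So the entire task is this successive-differences-vanish claim.

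First, I would control the dual increment by primal increments. The optimality condition of \eqref{eq:prox:x-step}, combined with the multiplier update \eqref{y-update}, gives
\begin{align}
D^\top\bm{y}^{t+1}=\nabla(f+\phi)(\bm{x}^{t+1})-\nabla\phi(\bm{x}^t),
\end{align}
so that
\begin{align}
D^\top(\bm{y}^{t+1}-\bm{y}^t)=\bigl[\nabla(f+\phi)(\bm{x}^{t+1})-\nabla(f+\phi)(\bm{x}^t)\bigr]-\bigl[\nabla\phi(\bm{x}^t)-\nabla\phi(\bm{x}^{t-1})\bigr].
\end{align}
Using the Young-type inequality $\|a-b\|_2^2\le \tfrac{1}{r}\|a\|_2^2+\tfrac{1}{1-r}\|b\|_2^2$ for $0<r<1$ together with the smoothness assumptions (A3), (A5) and the surjectivity bound $\sigma\|\bm{y}\|_2^2\le\|D^\top\bm{y}\|_2^2$ from (A1), this produces
\begin{align}
\|\bm{y}^{t+1}-\bm{y}^t\|_2^2\le \frac{1}{\sigma}\Bigl(\frac{L_1^2}{r}\|\bm{x}^{t+1}-\bm{x}^t\|_2^2+\frac{L_2^2}{1-r}\|\bm{x}^t-\bm{x}^{t-1}\|_2^2\Bigr).
\end{align}

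Next comes a sufficient-decrease estimate for the merit function
\begin{align}
\Psi_t:=L_\rho(\bm{x}^t,\bm{z}^t,\bm{y}^t)+\frac{L_2^2}{(1-r)\rho\sigma}\|\bm{x}^t-\bm{x}^{t-1}\|_2^2.
\end{align}
Splitting $L_\rho(\bm{x}^{t+1},\bm{z}^{t+1},\bm{y}^{t+1})-L_\rho(\bm{x}^t,\bm{z}^t,\bm{y}^t)$ by the three updates: the $\bm{z}$-step's contribution is nonpositive since $\bm{z}^{t+1}$ globally minimizes $L_\rho(\bm{x}^t,\cdot,\bm{y}^t)$; the $\bm{x}$-step's contribution is at most $-\tfrac{\alpha_1+\alpha_2}{2}\|\bm{x}^{t+1}-\bm{x}^t\|_2^2$, because the subproblem's objective $L_\rho(\cdot,\bm{z}^{t+1},\bm{y}^t)+B_\phi(\cdot,\bm{x}^t)$ is $(\alpha_1+\alpha_2)$-strongly convex by (A4), (A5), so \eqref{plain_bound} evaluated at $\bm{x}^t$ against the minimizer $\bm{x}^{t+1}$ (and dropping the nonnegative Bregman term) gives the bound; and the $\bm{y}$-step contributes exactly $\tfrac{1}{\rho}\|\bm{y}^{t+1}-\bm{y}^t\|_2^2$. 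Combining with the dual-increment bound and invoking (A6), a routine computation produces a constant $c>0$ with
\begin{align}
\Psi_{t+1}-\Psi_t\le -c\,\|\bm{x}^{t+1}-\bm{x}^t\|_2^2.
\end{align}

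Finally I would argue $\Psi_t$ is bounded below: along the subsequence converging to $(\bm{x}^*,\bm{z}^*,\bm{y}^*)$, continuity of $f,\phi,T_K$ gives $L_\rho(\bm{x}^{t_i},\bm{z}^{t_i},\bm{y}^{t_i})\to L_\rho(\bm{x}^*,\bm{z}^*,\bm{y}^*)$, and the added term in $\Psi_t$ is nonnegative, so $\Psi_{t_i}$ is bounded below. Monotonicity forces $\Psi_t$ to converge to a finite limit, hence $\sum_t\|\bm{x}^{t+1}-\bm{x}^t\|_2^2<\infty$ and in particular $\|\bm{x}^{t+1}-\bm{x}^t\|_2\to 0$. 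The dual-increment bound then yields $\|\bm{y}^{t+1}-\bm{y}^t\|_2\to 0$, and the identity $\bm{z}^{t+1}-\bm{z}^t=D(\bm{x}^{t+1}-\bm{x}^t)+\tfrac{1}{\rho}\bigl((\bm{y}^{t+1}-\bm{y}^t)-(\bm{y}^t-\bm{y}^{t-1})\bigr)$ obtained from \eqref{y-update} gives $\|\bm{z}^{t+1}-\bm{z}^t\|_2\to 0$. Proposition \ref{prop:weak-convergence} then finishes the proof. The principal obstacle is designing the merit function $\Psi_t$ so that the positive $\bm{y}$-step contribution is absorbed by the strongly convex negative $\bm{x}$-step contribution; the threshold in (A6) is precisely the quantitative condition under which that absorption succeeds.
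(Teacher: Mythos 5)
Your proposal follows essentially the same route as the paper: the same bound $\sigma\|\bm{y}^{t+1}-\bm{y}^t\|_2^2\le\frac{L_1^2}{r}\|\bm{x}^{t+1}-\bm{x}^t\|_2^2+\frac{L_2^2}{1-r}\|\bm{x}^t-\bm{x}^{t-1}\|_2^2$ from the $\bm{x}$-step optimality condition and \eqref{y-update}, the same three-way split of the decrease of $L_\rho$, and the same conclusion via summability of $\|\bm{x}^{t+1}-\bm{x}^t\|_2^2$ and Proposition \ref{prop:weak-convergence}. Packaging the argument through the merit function $\Psi_t=L_\rho(\bm{x}^t,\bm{z}^t,\bm{y}^t)+\frac{L_2^2}{\sigma\rho(1-r)}\|\bm{x}^t-\bm{x}^{t-1}\|_2^2$ is only a cosmetic difference: the paper telescopes the one-step inequality directly here and uses exactly this $\Psi_t$ in the proof of Theorem \ref{thm:boundness}.

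One step of your justification is wrong as stated, though the bound you derive from it is correct. You claim the $\bm{x}$-subproblem objective $L_\rho(\cdot,\bm{z}^{t+1},\bm{y}^t)+B_\phi(\cdot,\bm{x}^t)$ is $(\alpha_1+\alpha_2)$-strongly convex and that one may then drop the nonnegative Bregman term. As a function of $\bm{x}$ this objective equals $f+\phi+\frac{\rho}{2}\|D\cdot\|_2^2$ plus an affine term, so (A4) gives only $\alpha_1$-strong convexity; $\phi$ appears exactly once and cannot be counted both in (A4) and again via (A5). (Take $f=0$, $D=I$, $\phi=\frac{\alpha_2}{2}\|\cdot\|_2^2$ with the tight $\alpha_1=\alpha_2+\rho$ to see the $(\alpha_1+\alpha_2)$ claim fail.) The correct accounting, which is what the paper does, is to apply \eqref{plain_bound} with modulus $\alpha_1$ to get $L_\rho(\bm{x}^{t+1},\bm{z}^{t+1},\bm{y}^t)-L_\rho(\bm{x}^t,\bm{z}^{t+1},\bm{y}^t)\le-\frac{\alpha_1}{2}\|\bm{x}^{t+1}-\bm{x}^t\|_2^2-B_\phi(\bm{x}^{t+1},\bm{x}^t)$ and then to \emph{keep} the Bregman term and lower-bound it by $\frac{\alpha_2}{2}\|\bm{x}^{t+1}-\bm{x}^t\|_2^2$ using the $\alpha_2$-strong convexity of $\phi$ from (A5). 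With that one-line repair your argument matches the paper's proof.
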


\begin{proof}
From the optimality condition of \eqref{eq:prox:x-step} and the equation \eqref{y-update}, we obtain
\begin{align}\label{eq:y-b-x}
\begin{split}
\sigma\|\bm{y}^{t+1}-\bm{y}^{t}\|_2^2 &\leq\|D^\top(\bm{y}^{t+1}-\bm{y}^{t})\|_2^2\\
&=\|\nabla f(\bm{x}^{t+1})+\nabla\phi(\bm{x}^{t+1})-\nabla\phi(\bm{x}^t)-\nabla f(\bm{x}^t)-\nabla\phi(\bm{x}^t)+\nabla\phi(\bm{x}^{t-1})\|_2^2\\
&\leq\frac{1}{r}\|\nabla f(\bm{x}^{t+1})+\nabla\phi(\bm{x}^{t+1})-\nabla f(\bm{x}^t)-\nabla\phi(\bm{x}^t)\|_2^2+\frac{1}{1-r}\|\nabla\phi(\bm{x}^t)-\nabla\phi(\bm{x}^{t-1})\|_2^2\\
&\leq\frac{L_1^2}{r}\|\bm{x}^{t+1}-\bm{x}^t\|_2^2+\frac{L_2^2}{1-r}\|\bm{x}^t-\bm{x}^{t-1}\|_2^2,
\end{split}
\end{align}
where the number $r$ satisfies the assumption \emph{(A6)} and the first inequality follows from the assumption \emph{(A1)}, the second one from the inequality $\|a+b\|_2^2\le\frac{\|a\|_2^2}{r}+\frac{\|b\|_2^2}{1-r}$, the third one from the assumptions \emph{(A3)} and \emph{(A5)}.
On the other hand, combining the equation \eqref{y-update} with the triangle inequality yields
\begin{align}\label{eq:z-b-x}
\|\bm{z}^{t+1}-\bm{z}^t\|_2\leq\|D(\bm{x}^{t+1}-\bm{x}^t)\|_2+\frac{1}{\rho}\|\bm{y}^{t+1}-\bm{y}^t\|_2+\frac{1}{\rho}\|\bm{y}^t-\bm{y}^{t-1}\|_2.
\end{align}
The above two inequalities imply that if the sequence $\|\bm{x}^{t+1}-\bm{x}^t\|_2$ converges to $0$, then both $\|\bm{z}^{t+1}-\bm{z}^t\|_2$ and $\|\bm{y}^{t+1}-\bm{y}^t\|_2$ also converge to $0$.
Thus we next show that $\|\bm{x}^{t+1}-\bm{x}^t\|_2$ converges to $0$.

From \eqref{y-update} and \eqref{eq:y-b-x}, we obtain
\begin{align}
\begin{split}
L_\rho(\bm{x}^{t+1},\bm{z}^{t+1},\bm{y}^{t+1})-L_\rho(\bm{x}^{t+1},\bm{z}^{t+1},\bm{y}^t) &=(\bm{y}^{t+1}-\bm{y}^t)^\top(\bm{z}^{t+1}-D\bm{x}^{t+1})\\
&=\frac{1}{\rho}\|\bm{y}^{t+1}-\bm{y}^{t}\|_2^2\\
&\leq\frac{L_1^2}{\sigma\rho r}\|\bm{x}^{t+1}-\bm{x}^t\|_2^2+\frac{L_2^2}{\sigma\rho(1-r)}\|\bm{x}^t-\bm{x}^{t-1}\|_2^2.
\end{split}
\end{align}
Since $L_\rho(\bm{x}, \bm{z}^{t+1}, \bm{y}^t)+B_\phi(\bm{x}, \bm{x}^t)$ is $\alpha_1$-strongly convex by the assumption \emph{(A4)}, using the inequality \eqref{plain_bound}, we have
\begin{align}
\begin{split}
L_\rho(\bm{x}^{t+1},\bm{z}^{t+1},\bm{y}^t)-L_\rho(\bm{x}^t,\bm{z}^{t+1},\bm{y}^t) &\leq-\frac{\alpha_1}{2}\|\bm{x}^{t+1}-\bm{x}^t\|_2^2-D_\phi(\bm{x}^{t+1},\bm{x}^t)\\
&\leq-\frac{\alpha_1}{2}\|\bm{x}^{t+1}-\bm{x}^t\|_2^2-\frac{\alpha_2}{2}\|\bm{x}^{t+1}-\bm{x}^t\|_2^2\\
&=-\frac{\alpha_1+\alpha_2}{2}\|\bm{x}^{t+1}-\bm{x}^t\|_2^2,
\end{split}
\end{align}
where we use the $\alpha_2$-strong convexity of $\phi$ (the assumption \emph{(A5)}) in the second inequality.
Furthermore, because $\bm{z}^{t+1}$ is a minimizer of \eqref{z-update}, the inequality
\begin{align}
L_\rho(\bm{x}^t,\bm{z}^{t+1},\bm{y}^t)-L_\rho(\bm{x}^t,\bm{z}^t,\bm{y}^t) &\leq0
\end{align}
holds.
By adding the above three inequalities together, we have
\begin{align}\label{eq:once-lyapunov}
L_\rho(\bm{x}^{t+1},\bm{z}^{t+1},\bm{y}^{t+1})-L_\rho(\bm{x}^t,\bm{z}^t,\bm{y}^t) &\leq\left(\frac{L_1^2}{\sigma\rho r}-\frac{\alpha_1+\alpha_2}{2}\right)\|\bm{x}^{t+1}-\bm{x}^t\|_2^2+\frac{L_2^2}{\sigma\rho(1-r)}\|\bm{x}^t-\bm{x}^{t-1}\|_2^2.
\end{align}
Let $\{(\bm{x}^{t_i},\bm{z}^{t_i},\bm{y}^{t_i})\}$ be a subsequence of $\{(\bm{x}^t,\bm{z}^t,\bm{y}^t)\}$ that converges to a partial limit $(\bm{x}^*,\bm{z}^*,\bm{y}^*)$.
Noting that $C:=\frac{\alpha_1+\alpha_2}{2}-\frac{1}{\sigma\rho}\left(\frac{L_1^2}{r}+\frac{L_2^2}{1-r}\right)>0$ from the assumption \emph{(A6)}, we have
\begin{align}\label{eq:lyapunov}
\begin{split}
&L_\rho(\bm{x}^{t_i},\bm{z}^{t_i},\bm{y}^{t_i})-L_\rho(\bm{x}^1,\bm{z}^1,\bm{y}^1)\\
&\leq \left\{\sum_{t=1}^{t_i-1}\left(\frac{L_1^2}{\sigma\rho r}-\frac{\alpha_1+\alpha_2}{2}\right)\|\bm{x}^{t+1}-\bm{x}^t\|_2^2+\frac{L_2^2}{\sigma\rho(1-r)}\|\bm{x}^t-\bm{x}^{t-1}\|_2^2\right\}\\
&= \sum_{t=1}^{t_i-1}\left(\frac{L_1^2}{\sigma\rho r}-\frac{\alpha_1+\alpha_2}{2}\right)\|\bm{x}^{t+1}-\bm{x}^t\|_2^2+\sum_{t=0}^{t_i-2}\frac{L_2^2}{\sigma\rho(1-r)}\|\bm{x}^{t+1}-\bm{x}^t\|_2^2\\
&= \frac{L_2^2}{\sigma\rho(1-r)}\|\bm{x}^1-\bm{x}^0\|_2^2-\left(\frac{\alpha_1+\alpha_2}{2}-\frac{L_1^2}{\sigma\rho r}\right)\|\bm{x}^{t_i}-\bm{x}^{t_i-1}\|_2^2-\sum_{t=1}^{t_i-1}C\|\bm{x}^{t+1}-\bm{x}^t\|_2^2\\
&\leq \frac{L_2^2}{\sigma\rho(1-r)}\|\bm{x}^1-\bm{x}^0\|_2^2-C\sum_{t=1}^{t_i-1}\|\bm{x}^{t+1}-\bm{x}^t\|_2^2.
\end{split}
\end{align}
By the continuity of $L_\rho$, we have 
\begin{align}\label{eq:lyapunov-val}
\lim_{i\rightarrow\infty}L_\rho(\bm{x}^{t_i},\bm{z}^{t_i},\bm{y}^{t_i})= L_\rho(\bm{x}^*,\bm{z}^*,\bm{y}^*)>-\infty.
\end{align}
Taking the limit $i\rightarrow\infty$ in \eqref{eq:lyapunov} with \eqref{eq:lyapunov-val} leads to
\begin{align}
\sum_{t=1}^{\infty}\|\bm{x}^{t+1}-\bm{x}^t\|_2^2<\infty,
\end{align}
which implies that $\|\bm{x}^{t+1}-\bm{x}^t\|_2\rightarrow0$. Thus, $(\|\bm{x}^{t+1}-\bm{x}^t\|_2,\|\bm{z}^{t+1}-\bm{z}^t\|_2,\|\bm{y}^{t+1}-\bm{y}^t\|_2)\rightarrow(0,0,0)$ holds. Since $f$ and $\phi$ are continuously differentiable and $f$ is a convex function by the assumptions \emph{(A2)} and \emph{(A3)}, %applying 
Proposition \ref{prop:weak-convergence} yields the desired result.
\end{proof}

We will note below how to choose $\phi$ and $\rho$ based on Theorem~\ref{thm:strong-convergence}.

\begin{example}\label{ex:ordinary}
Consider the case where $D$ is surjective, $f$ is $L$-smooth and $\alpha$-strongly convex, and $\phi=0$. 
The assumptions (A3)--(A5) are then fulfilled with $L_1=L$, $L_2=0$, $\alpha_1=\alpha$, and $\alpha_2=0$. 
By choosing $\rho>\frac{2L^2}{\sigma\alpha r}$ for some $0<r<1$, the assumption (A6) is also fulfilled. 
\end{example}

\begin{example}\label{ex:smooth}
Consider the case where $D$ is surjective and $f$ is $L$-smooth and convex, and $\phi=\frac{L}{2}\|\cdot\|_2^2-f$.
In this case, $\phi$ is L-smooth since $\phi$ is differentiable convex and $\frac{L}{2}\|\cdot\|_2^2-\phi=f$ is convex.
Accordingly, the assumptions (A3)--(A5) are then fulfilled with $L_1=L_2=\alpha_1=L$ and $\alpha_2=0$.
By choosing $\rho>\frac{2L}{\sigma}\left(\frac{1}{r}+\frac{1}{1-r}\right)$ for some $0<r<1$, the assumption (A6) also holds.
\end{example}

\begin{example}\label{ex:non-surjective}
When $D$ is not surjective, we cannot apply Theorem \ref{thm:strong-convergence} because $\sigma:=\lambda_{\min}(DD^\top)=0$. It implies that $\rho>\frac{2}{\sigma(\alpha_1+\alpha_2)}\left(\frac{L_1^2}{r}+\frac{L_2^2}{1-r}\right)=\infty$ as a formality. In the computational examples of Section \ref{sec:numerical}, we choose $\phi$ such that the assumptions (A3)--(A5) hold and take a large $\rho$ to mitigate the inconsistency.
\end{example}

While Theorem \ref{thm:strong-convergence} assumes that Proximal ADMM has a partial limit, 
the existence of a partial limit is guaranteed by the following theorem.

\begin{theorem}\label{thm:boundness}
In addition to the assumptions (A1), (A3)--(A6), suppose that $f$ is coercive, i.e., $\lim_{\|\bm{x}\|_2\rightarrow\infty}f(\bm{x})=\infty$, and that there exists $0<\zeta<\sigma\rho r$ such that
\begin{align}
f_{\inf}:=\inf_{\bm{x}}\left\{f(\bm{x})-\frac{1}{2\zeta}\|\nabla f(\bm{x})\|_2^2\right\}>-\infty.
\end{align}
Then the sequence $\{(\bm{x}^t,\bm{z}^t,\bm{y}^t)\}$ generated from Proximal ADMM is bounded.
\end{theorem}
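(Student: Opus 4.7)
The plan is to prove boundedness of $\{(\bm{x}^t,\bm{z}^t,\bm{y}^t)\}$ in three stages: construct a Lyapunov function that inherits the one-step descent estimate from the proof of Theorem~\ref{thm:strong-convergence}; lower-bound it in a way that retains a positive multiple of $f(\bm{x}^t)$; then invoke coercivity of $f$ to get $\{\bm{x}^t\}$ bounded, from which boundedness of $\{\bm{y}^t\}$ and $\{\bm{z}^t\}$ follows by the algorithm's relations.

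For the Lyapunov step, I would set $\Psi^t := L_\rho(\bm{x}^t,\bm{z}^t,\bm{y}^t)+\kappa\|\bm{x}^t-\bm{x}^{t-1}\|_2^2$ with some $\kappa\in\bigl[\tfrac{L_2^2}{\sigma\rho(1-r)},\ \tfrac{\alpha_1+\alpha_2}{2}-\tfrac{L_1^2}{\sigma\rho r}\bigr]$, whose nonemptiness is exactly (A6). A direct rearrangement of the one-step inequality \eqref{eq:once-lyapunov} shows $\Psi^{t+1}\leq\Psi^t$, so $\Psi^t\leq\Psi^1$ for every $t$.

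For the lower-bound step, I would first use $\bm{z}^t-D\bm{x}^t=\tfrac{1}{\rho}(\bm{y}^t-\bm{y}^{t-1})$ from \eqref{y-update} to rewrite $L_\rho^t=f(\bm{x}^t)+\gamma T_K(\bm{z}^t)+\tfrac{1}{\rho}(\bm{y}^t)^\top(\bm{y}^t-\bm{y}^{t-1})+\tfrac{1}{2\rho}\|\bm{y}^t-\bm{y}^{t-1}\|_2^2$. Dropping $T_K\geq 0$ and using $|a^\top b|\leq\tfrac{1}{2}(\|a\|^2+\|b\|^2)$ yields $L_\rho^t\geq f(\bm{x}^t)-\tfrac{1}{2\rho}\|\bm{y}^t\|_2^2$. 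Next, combining the optimality condition of \eqref{eq:prox:x-step} with \eqref{y-update} produces the identity $D^\top\bm{y}^t=\nabla f(\bm{x}^t)+\nabla\phi(\bm{x}^t)-\nabla\phi(\bm{x}^{t-1})$, which together with (A1), (A5) and the splitting inequality gives
\begin{align}
\|\bm{y}^t\|_2^2\leq \tfrac{1}{\sigma}\|D^\top\bm{y}^t\|_2^2\leq \tfrac{1}{\sigma r}\|\nabla f(\bm{x}^t)\|_2^2+\tfrac{L_2^2}{\sigma(1-r)}\|\bm{x}^t-\bm{x}^{t-1}\|_2^2.
\end{align}
Invoking the assumption $f_{\inf}>-\infty$ (equivalently $\|\nabla f(\bm{x})\|_2^2\leq 2\zeta(f(\bm{x})-f_{\inf})$) then leads to
\begin{align}
L_\rho^t\geq \Bigl(1-\tfrac{\zeta}{\sigma\rho r}\Bigr)f(\bm{x}^t)+\tfrac{\zeta}{\sigma\rho r}f_{\inf}-\tfrac{L_2^2}{2\sigma\rho(1-r)}\|\bm{x}^t-\bm{x}^{t-1}\|_2^2.
\end{align}
Because $\zeta<\sigma\rho r$, the coefficient $\lambda:=1-\tfrac{\zeta}{\sigma\rho r}$ is strictly positive, and because $\kappa\geq\tfrac{L_2^2}{\sigma\rho(1-r)}\geq\tfrac{L_2^2}{2\sigma\rho(1-r)}$ the residual term on $\|\bm{x}^t-\bm{x}^{t-1}\|_2^2$ is absorbed into $\Psi^t$, so $\Psi^t\geq\lambda f(\bm{x}^t)+\tfrac{\zeta}{\sigma\rho r}f_{\inf}$.

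Chaining the two steps gives $\lambda f(\bm{x}^t)\leq\Psi^1-\tfrac{\zeta}{\sigma\rho r}f_{\inf}$, hence $\{f(\bm{x}^t)\}$ is bounded above, and coercivity of $f$ yields boundedness of $\{\bm{x}^t\}$. Boundedness of $\{\bm{y}^t\}$ then follows from the identity for $D^\top\bm{y}^t$ together with (A1), continuity of $\nabla f$ and $\nabla\phi$, and the fact that $\|\bm{x}^t-\bm{x}^{t-1}\|_2$ is bounded whenever $\{\bm{x}^t\}$ is. Finally, $\bm{z}^t=D\bm{x}^t+\tfrac{1}{\rho}(\bm{y}^t-\bm{y}^{t-1})$ is bounded as a sum of bounded quantities. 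The delicate part is the lower bound: balancing the Young-type splitting and the choice of $\kappa$ so that the gradient term can be absorbed by the $f_{\inf}$ inequality while keeping the residual $\|\bm{x}^t-\bm{x}^{t-1}\|^2$ coefficient below $\kappa$ requires the simultaneous slack provided by $\zeta<\sigma\rho r$ and (A6); without the strict inequality on $\zeta$ one would lose the factor $\lambda>0$ and the whole argument would collapse.
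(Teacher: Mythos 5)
Your proposal is correct and follows essentially the same route as the paper's proof: the monotone Lyapunov function $L_\rho^t+\kappa\|\bm{x}^t-\bm{x}^{t-1}\|_2^2$ (the paper takes $\kappa=\tfrac{L_2^2}{\sigma\rho(1-r)}$, the left endpoint of your admissible interval), the dual-variable bound $\sigma\|\bm{y}^t\|_2^2\le\tfrac{1}{r}\|\nabla f(\bm{x}^t)\|_2^2+\tfrac{L_2^2}{1-r}\|\bm{x}^t-\bm{x}^{t-1}\|_2^2$, the $f_{\inf}$ condition to convert the gradient term into a multiple of $f(\bm{x}^t)$, and coercivity to conclude. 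Your Young-inequality lower bound on the cross term is equivalent to the paper's completion of the square, so the arguments match step for step.
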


\begin{proof}
Since the assumptions \emph{(A1)}, \emph{(A3)}--\emph{(A6)} are fulfilled, the inequality \eqref{eq:once-lyapunov} holds. %Then 
By slightly transforming it, we obtain
\begin{align}
L_\rho(\bm{x}^{t+1},\bm{z}^{t+1},\bm{y}^{t+1})&+\frac{L_2^2}{\sigma\rho(1-r)}\|\bm{x}^{t+1}-\bm{x}^t\|_2^2-L_\rho(\bm{x}^t,\bm{z}^t,\bm{y}^t)-\frac{L_2^2}{\sigma\rho(1-r)}\|\bm{x}^t-\bm{x}^{t-1}\|_2^2\\
&\leq\left(\frac{L_1^2}{\sigma\rho r}+\frac{L_2^2}{\sigma\rho(1-r)}-\frac{\alpha_1+\alpha_2}{2}\right)\|\bm{x}^{t+1}-\bm{x}^t\|_2^2\\
&\leq0,
\end{align}
which implies that the sequence $L_\rho(\bm{x}^t,\bm{z}^t,\bm{y}^t)+\frac{L_2^2}{\sigma\rho(1-r)}\|\bm{x}^t-\bm{x}^{t-1}\|_2^2$ is monotonically decreasing. Hence, we see that
\begin{align}\label{eq:monotone}
&L_\rho(\bm{x}^t,\bm{z}^t,\bm{y}^t)+\frac{L_2^2}{\sigma\rho(1-r)}\|\bm{x}^t-\bm{x}^{t-1}\|_2^2\le L_\rho(\bm{x}^1,\bm{z}^1,\bm{y}^1)+\frac{L_2^2}{\sigma\rho(1-r)}\|\bm{x}^1-\bm{x}^0\|_2^2.
\end{align}
On the other hand, combining \eqref{y-update} and the optimality condition of \eqref{eq:prox:x-step} yields
\begin{align}
\nabla f(\bm{x}^t)-D^\top\bm{y}^t+\nabla\phi(\bm{x}^t)-\nabla\phi(\bm{x}^{t-1})=0.
\end{align}
Then, from the assumptions \emph{(A1)} and \emph{(A5)}, we have
\begin{align}\label{eq:y-bound-nablaf}
\begin{split}
\sigma\|\bm{y}^t\|_2^2 &\le\|D^\top\bm{y}^t\|_2^2\\
&\le\|\nabla f(\bm{x}^t)+\nabla\phi(\bm{x}^t)-\nabla\phi(\bm{x}^{t-1})\|_2^2\\
&\le\frac{1}{r}\|\nabla f(\bm{x}^t)\|_2^2+\frac{1}{1-r}\|\nabla\phi(\bm{x}^t)-\nabla\phi(\bm{x}^{t-1})\|_2^2\\
&\le\frac{1}{r}\|\nabla f(\bm{x}^t)\|_2^2+\frac{L_2^2}{1-r}\|\bm{x}^t-\bm{x}^{t-1}\|_2^2.
\end{split}
\end{align}
Combining \eqref{eq:monotone} with \eqref{eq:y-bound-nablaf} shows that
\begin{align}
&L_\rho(\bm{x}^1,\bm{z}^1,\bm{y}^1)+\frac{L_2^2}{\sigma\rho(1-r)}\|\bm{x}^1-\bm{x}^0\|_2^2\\
&\ge f(\bm{x}^t)+\gamma T_K(\bm{z}^t)+{\bm{y}^t}^\top(\bm{z}^t-D\bm{x}^t)+\frac{\rho}{2}{\|\bm{z}^t-D\bm{x}^t\|}_{2}^2+\frac{L_2^2}{\sigma\rho(1-r)}\|\bm{x}^t-\bm{x}^{t-1}\|_2^2\\
&=f(\bm{x}^t)+\gamma T_K(\bm{z}^t)+\frac{\rho}{2}{\left\|\bm{z}^t-D\bm{x}^t+\frac{1}{\rho}\bm{y}^t\right\|}_{2}^2-\frac{1}{2\rho}\|\bm{y}^t\|_2^2+\frac{L_2^2}{\sigma\rho(1-r)}\|\bm{x}^t-\bm{x}^{t-1}\|_2^2\\
&\ge f(\bm{x}^t)-\frac{1}{2\sigma\rho r}\|\nabla f(\bm{x}^t)\|_2^2-\frac{L_2^2}{2\sigma\rho(1-r)}\|\bm{x}^t-\bm{x}^{t-1}\|_2^2+\frac{L_2^2}{\sigma\rho(1-r)}\|\bm{x}^t-\bm{x}^{t-1}\|_2^2\\
&\ge\left(1-\frac{\zeta}{\sigma\rho r}\right)f(\bm{x}^t)+\frac{\zeta}{\sigma\rho r}\left\{f(\bm{x}^t)-\frac{1}{2\zeta}\|\nabla f(\bm{x}^t)\|_2^2\right\}\\
&\ge\left(1-\frac{\zeta}{\sigma\rho r}\right)f(\bm{x}^t)+\frac{\zeta}{\sigma\rho r}f_{\inf}.
\end{align}
Since $f$ is coercive, the above inequality implies that $\{\bm{x}^t\}$ is bounded.
The boundedness of $\{\bm{y}^t\}$ and $\{\bm{z}^t\}$ follows from \eqref{eq:y-bound-nablaf} and \eqref{y-update}, respectively.
\end{proof}

\begin{example}\label{ex:boundness}
If $f$ is $L$-smooth and bounded below, then the inequality
\begin{align}
\inf_{\bm{x}}\Big\{f(\bm{x})-\frac{1}{2L}\|\nabla f(\bm{x})\|_2^2\Big\}>-\infty.
\end{align}
holds (see \citet[Remark 3]{li2015global}).
Note that a continuous and coercive function is bounded below. If $f$ is $L$-smooth and coercive, we choose $\rho$ so that it satisfies not only the inequality in Example \ref{ex:ordinary} or \ref{ex:smooth}, but also the condition $\rho>\frac{L}{\sigma r}$.
\end{example}

\subsection{Computation of cluster path}
To get a cluster path of NTL~\eqref{dcnl} on the basis of Proximal ADMM, we use a warm start. Let $\{K_t\}_{t=1}^T\subset\{0, 1,\ldots,\left|\mathcal{E}\right|\}$ be a decreasing sequence of the cardinality parameter $K$.
 \begin{algorithm}[H]
  \caption{Cluster Path}         
  \label{clusterpath}
  \begin{algorithmic}
   \STATE {\bfseries Input:} $\bm{x}^0, \bm{y}^0=0, \rho>0, \{K_t\}_{t=1}^T$.
   \FOR{$t = 1$ to $T$}
   \STATE Get $\bm{x}^t$ by using Proximal ADMM to solve (\ref{dcnl}) with $K_t$, $\bm{x}^{t-1}$, and $\bm{y}^{0}$.
   \ENDFOR
  \end{algorithmic}
 \end{algorithm}
Note that when $f_i$ is convex for all $i\in\mathcal{V}$, NL is a convex optimization problem and a global optimum is attained by any local search method, but NTL is a non-convex optimization, and the output of Proximal ADMM is expected to be very sensitive to the initial point $(\bm{x}^0,\bm{y}^0)$ (and $K$).
The choice of the initial point of cluster path is discussed through numerical experiments in Section \ref{sec:numerical}.

\section{Numerical examples}
\label{sec:numerical}
This section presents several numerical examples to demonstrate how NTL behaves in comparison with NL.
We used ADMM (Algorithm \ref{admm}) to solve NTL and Algorithm \ref{clusterpath} to generate a cluster path.
For ADMM to solve NTL, we used the following termination condition: $\|\bm{z}^{t+1}-D\bm{x}^{t+1}\|_2\leq\sqrt{p|\mathcal{E}|}\varepsilon^{\rm abs}+\varepsilon^{\rm rel}\max\{\|\bm{z}^{t+1}\|_2, \|D\bm{x}^{t+1}\|_2\}$ and $\|\bm{x}^{t+1}-\bm{x}^{t}\|_2\leq\sqrt{pn}\varepsilon^{\rm abs}+\varepsilon^{\rm rel}\|\bm{x}^{t+1}\|_2$ is satisfied with $\varepsilon^{\rm abs}=\varepsilon^{\rm rel}=10^{-5}$, or the number of iterations reaches 1000.
For NL, we also used ADMM and increased $\gamma$ as described in Section 1 when generating a cluster path.
ADMM for NL was terminated either when $\|\bm{z}^{t+1}-D\bm{x}^{t+1}\|_2\leq\sqrt{p|\mathcal{E}|}\varepsilon^{\rm abs}+\varepsilon^{\rm rel}\max\{\|\bm{z}^{t+1}\|_2, \|D\bm{x}^{t+1}\|_2\}$ and $\rho\|D(\bm{x}^{t+1}-\bm{x}^{t})\|_2\leq\sqrt{p|\mathcal{E}|}\varepsilon^{\rm abs}+\varepsilon^{\rm rel}\|\bm{y}^{t+1}\|_2$ were satisfied (as appeared in \citet{boyd2011distributed}) for $\varepsilon^{\rm abs}=\varepsilon^{\rm rel}=10^{-5}$, or when the number of iterations reached 1000.

\subsection{Ridge regression under two latent clusters}
We first consider a case where there is no prior information, that is, $W\equiv1$.
We solved \eqref{nlasso} and \eqref{dcnl}, respectively, for simple regression models using two datasets presented in the top row of Figure \ref{visual2}, where the number of data points is $n=100$ and the two latent clusters correspond to red and blue points.
Obviously, the left-hand side panel is the case where regression lines have different slopes and the dataset has a clear cluster structure, while in the right-hand side panel the two regression lines have similar slopes while keeping the linear separability of the two clouds. 
For each data point $(a_i,b_i)\in\mathbb{R}^2$, we consider the loss function of the form:
\begin{align}
f_i(x_{i,1},x_{i,2})&=\frac{1}{2}{\|b_i-x_{i,1}-a_ix_{i,2}\|}_2^2+\frac{\varepsilon}{2}x_{i,2}^2,
\quad i=1,...,100,
\label{ridge}
\end{align}
where $x_{i,1}$ and $x_{i,2}$ are the intercept and the slope, respectively, of the model corresponding to data point $i$, 
and 
$\varepsilon>0$ is a parameter to trade-off between the squared residual and the $\ell_2$-regularizer.
In this experiment we set $\varepsilon={10}^{-2}$ and consider a complete graph, i.e., $\mathcal{E}=\left\{\{i,j\}\mid i\neq j, i,j\in\mathcal{V}\right\}$,
and uniform weights $w_{\{i,j\}}=1$ for all $i,j\in\mathcal{V}$. Initial points of cluster path are defined by $\bm{y}^0=0, x_i^0=\argmin_{x\in\mathbb{R}^{2}}f_i(x)$.

For NL, let $(x_{i,1}(\gamma),x_{i,2}(\gamma))$ denote the centroid of data point $i$, obtained by ADMM under parameter $\gamma$.
The second row of Figure \ref{visual2} shows the cluster paths of centroids, $\{(x_{i,1}(\gamma),x_{i,2}(\gamma)):\gamma=10^{-3}\times(1.2)^{t-1},t=1,...,50\}$, generated by NL (via ADMM) with $\gamma$ increasing.
Starting with the initial points, $(x_{i,1}(0),x_{i,2}(0))=(b_i,0)=x_i^0$, which are highlighted in red or blue, they converge to a single black point in the middle as $\gamma$ grows. 
We can see, however, from these two panels that NL failed to capture the cluster structure well for either data sets in that the loci of centroids kept separated until when only one cluster was formed at the center point with a sufficiently large $\gamma$.

The third row of Figure \ref{visual2} shows cluster paths generated by Algorithm \ref{clusterpath}. We employed the same initial points as in NL.
From Example \ref{ex:non-surjective}, we set $\rho=10^4$ and set $\gamma$ to be larger than the threshold presented in Corollary \ref{q.e.p.p.}.
We generated the cluster path with $K=4500,4450,...,50,0$ in decreasing order.
We can see from the third row of Figure \ref{visual2} that NTL recovers true clusters for the dataset 1, but not fully for the dataset 2, in that we can see that some points joined in the opposite clusters for several small $K$'s.

Finally, we consider using NL to improve NTL.
The bottom row of Figure \ref{visual2} shows cluster paths generated by NTL starting with the initial point generated by NL. 
The midpoint in the cluster path of NL, denoted by small black points in the bottom row of Figure \ref{visual2}, was employed as the initial point for NTL.\footnote{More precisely, the midpoint was defined among the points of centroids $\{(x_1(\gamma),...,x_n(\gamma)):\gamma=10^{-3}\times(1.2)^{t-1},t=1,...,50\}$ where at least one centroid was different from one of the others.}
The choice of this initial point is motivated by the fact that samples belonging to the same true cluster are still likely to be closer to each other even if NL does not work well, as shown in the second row of Figure \ref{visual2}.
In contrast with the case where NTL is only applied, we can see that it is better classified for both data sets. 
These results support the use of NTL when no prior information is available.

 \begin{figure}[H]
  \begin{center}
  \begin{tabular}{|c|c|} \hline
   Case 1 & Case 2 \\ \hline\hline 
   \begin{minipage}[b]{0.45\linewidth}
    \centering
    \vspace{1pt}
    \includegraphics[width=1.0\columnwidth]{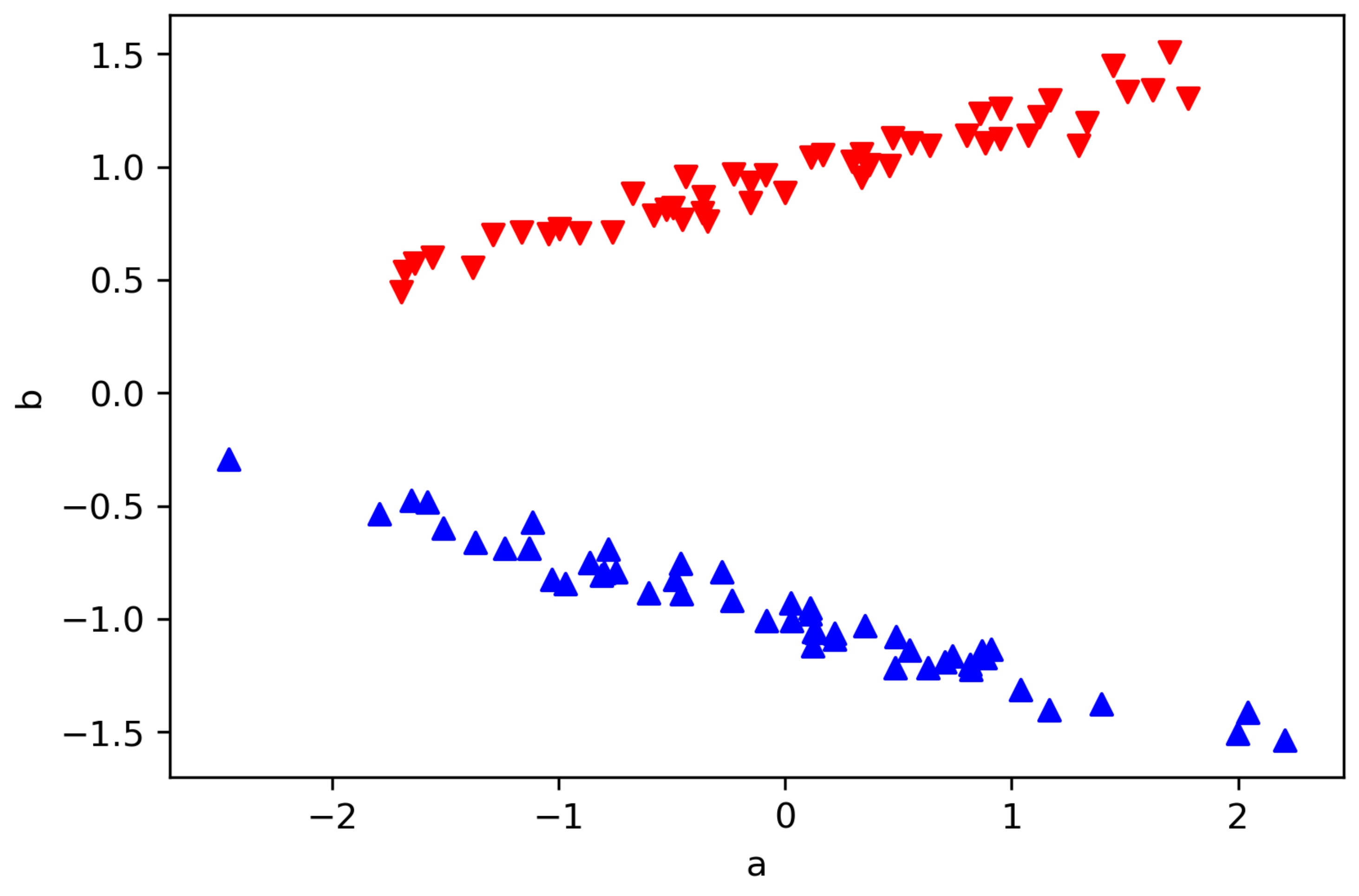}
   \end{minipage} &
   \begin{minipage}[b]{0.45\linewidth}
    \centering
    \vspace{1pt}
    \includegraphics[width=1.0\columnwidth]{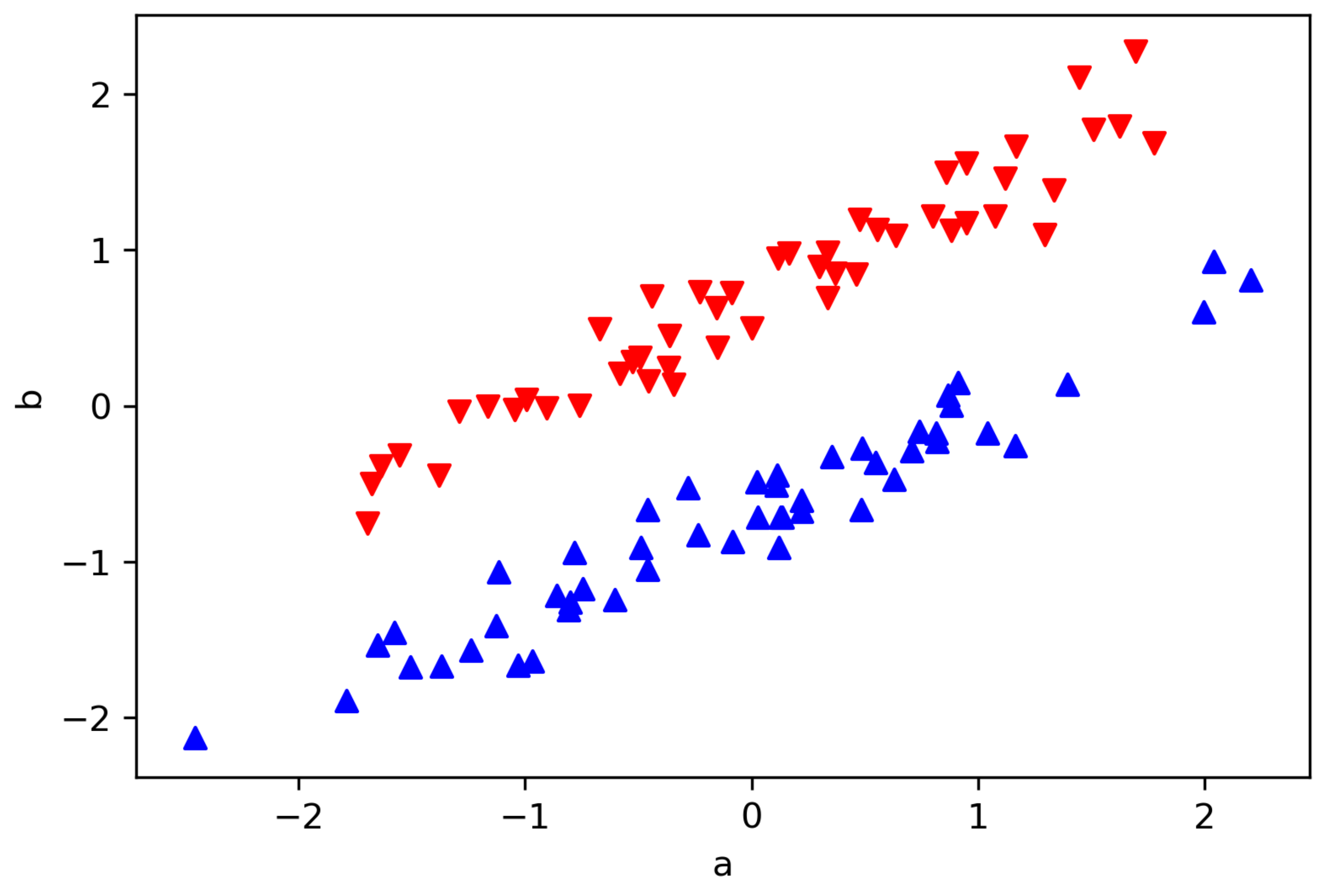}
   \end{minipage} \\ 
(a) plot of dataset & (b) plot of dataset \\
\hline 
   \begin{minipage}[b]{0.45\linewidth}
    \centering
    \vspace{1pt}
    \includegraphics[width=1.0\columnwidth]{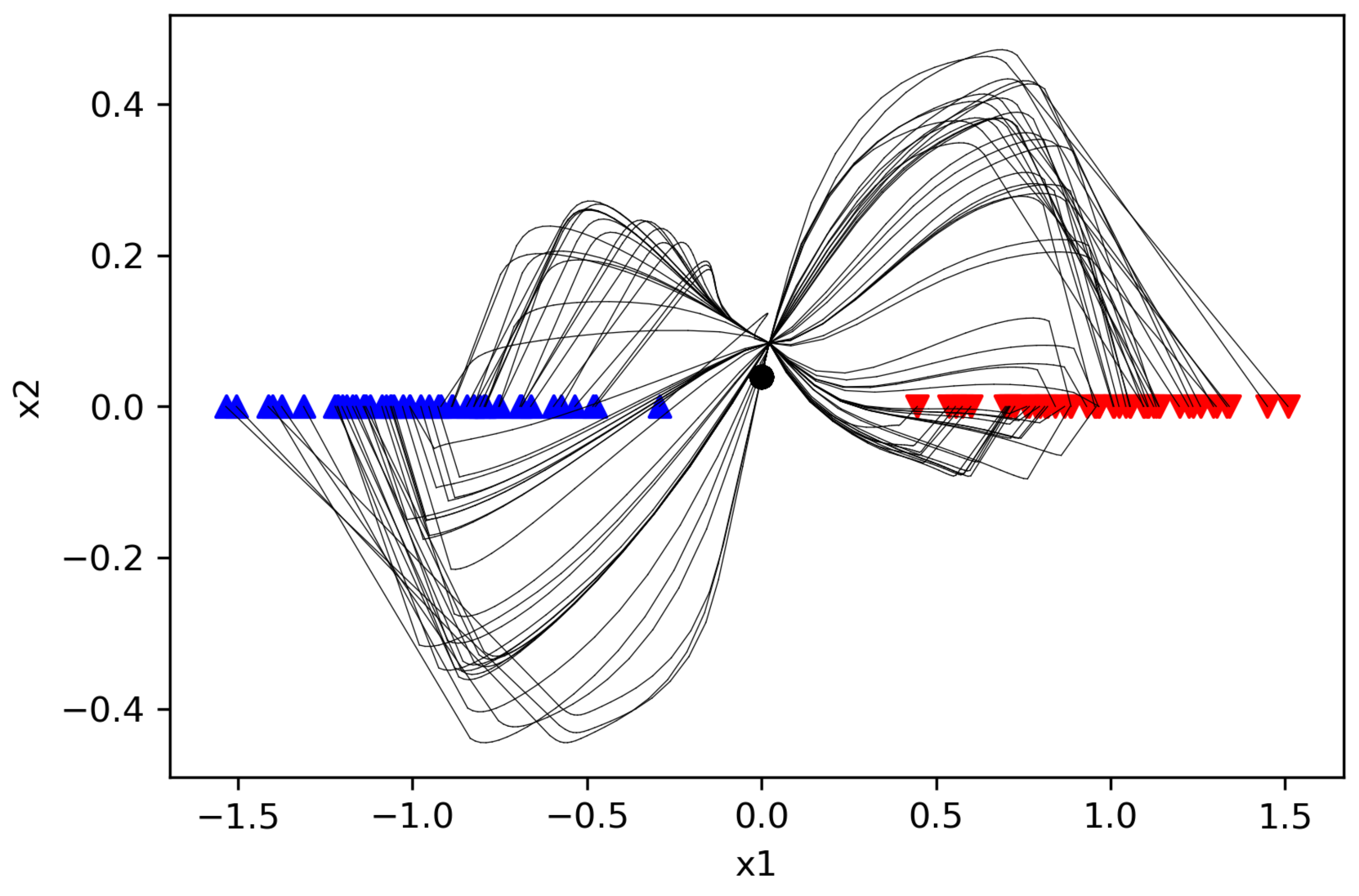}
   \end{minipage} &
   \begin{minipage}[b]{0.45\linewidth}
    \centering
    \vspace{1pt}
    \includegraphics[width=1.0\columnwidth]{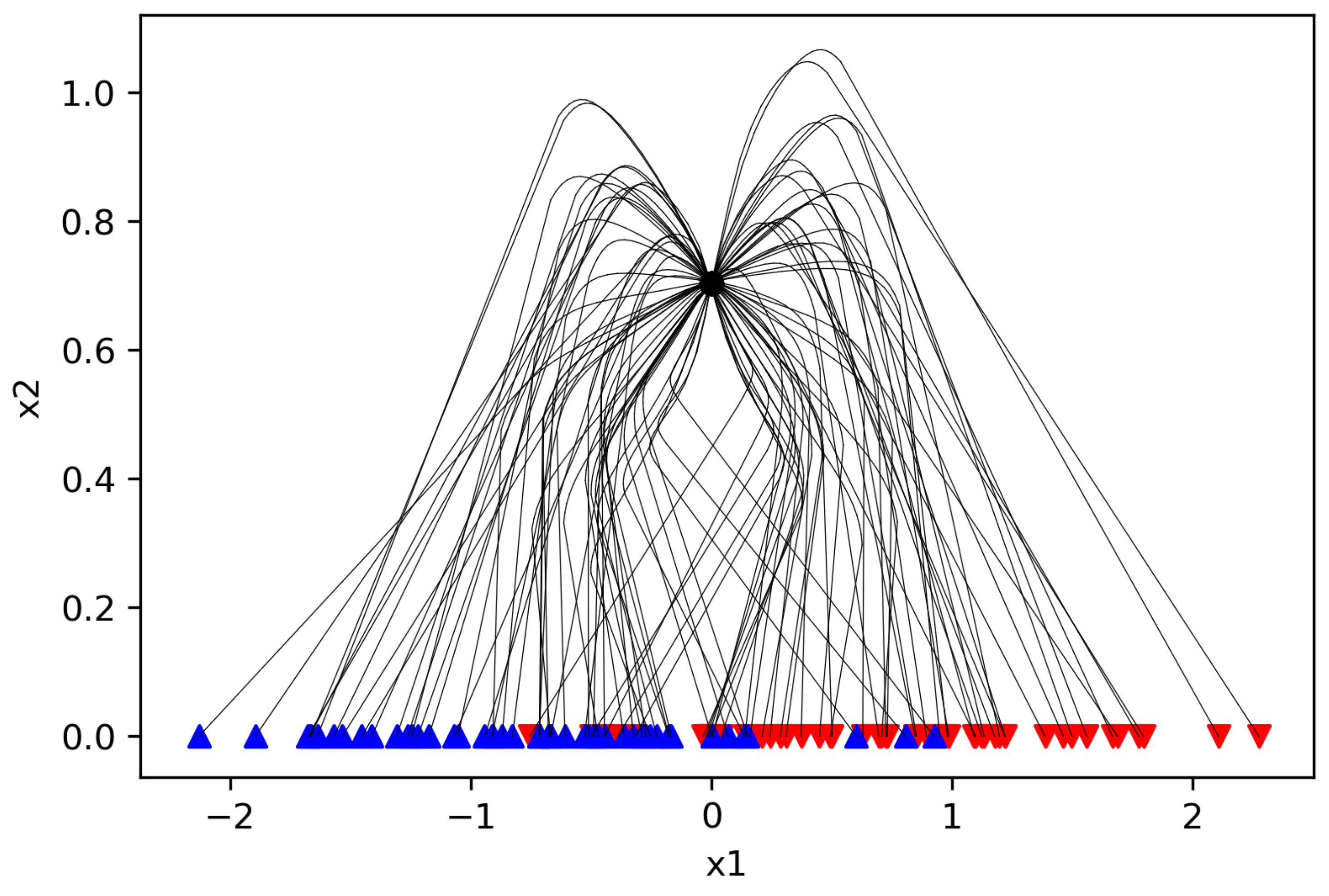}
   \end{minipage} \\ 
(c) NL & (d) NL \\
\hline 
   \begin{minipage}[b]{0.45\linewidth}
    \centering
    \vspace{1pt}
    \includegraphics[width=1.0\columnwidth]{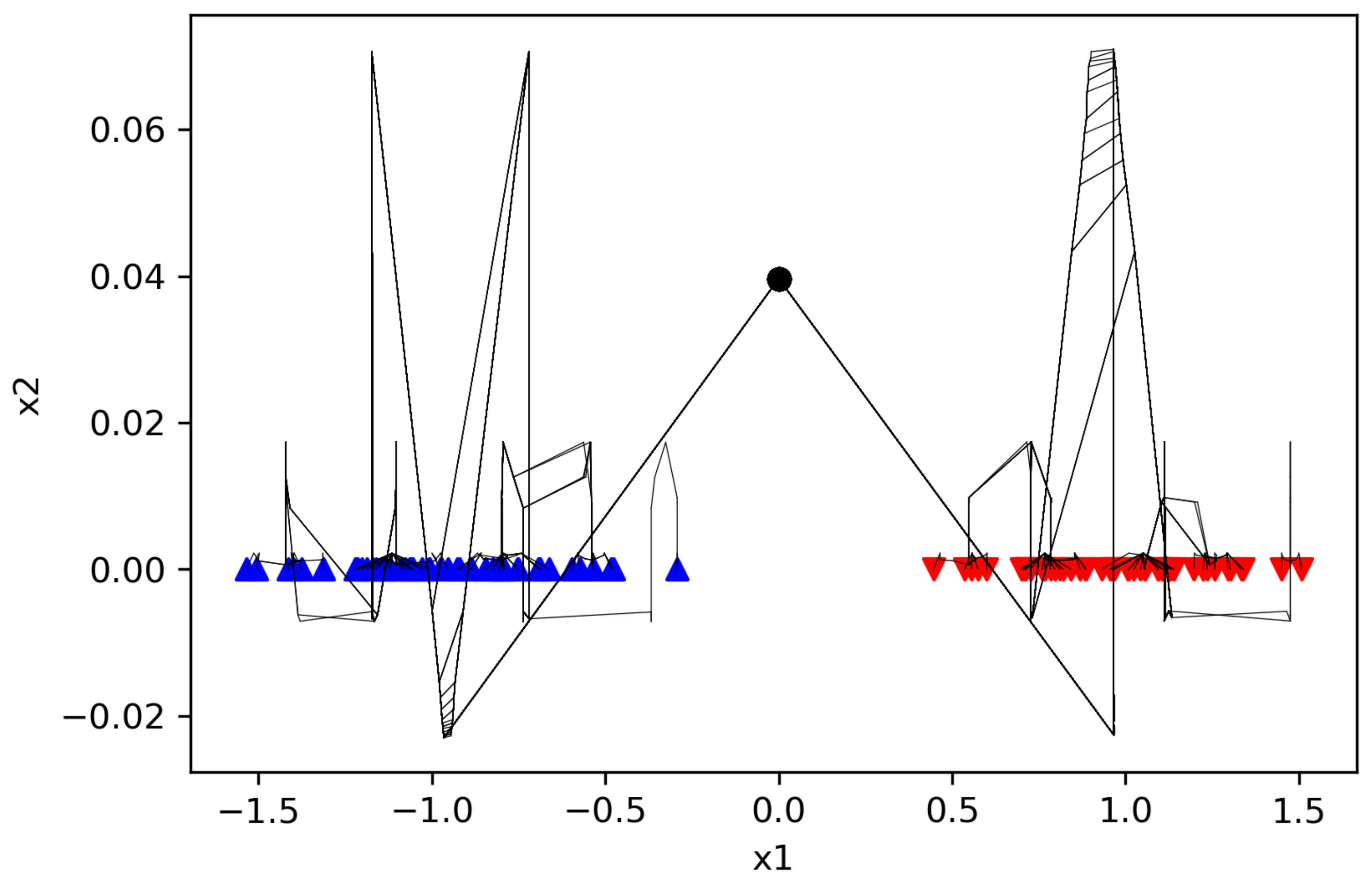}
   \end{minipage} &
   \begin{minipage}[b]{0.45\linewidth}
    \centering
    \vspace{1pt}
    \includegraphics[width=1.0\columnwidth]{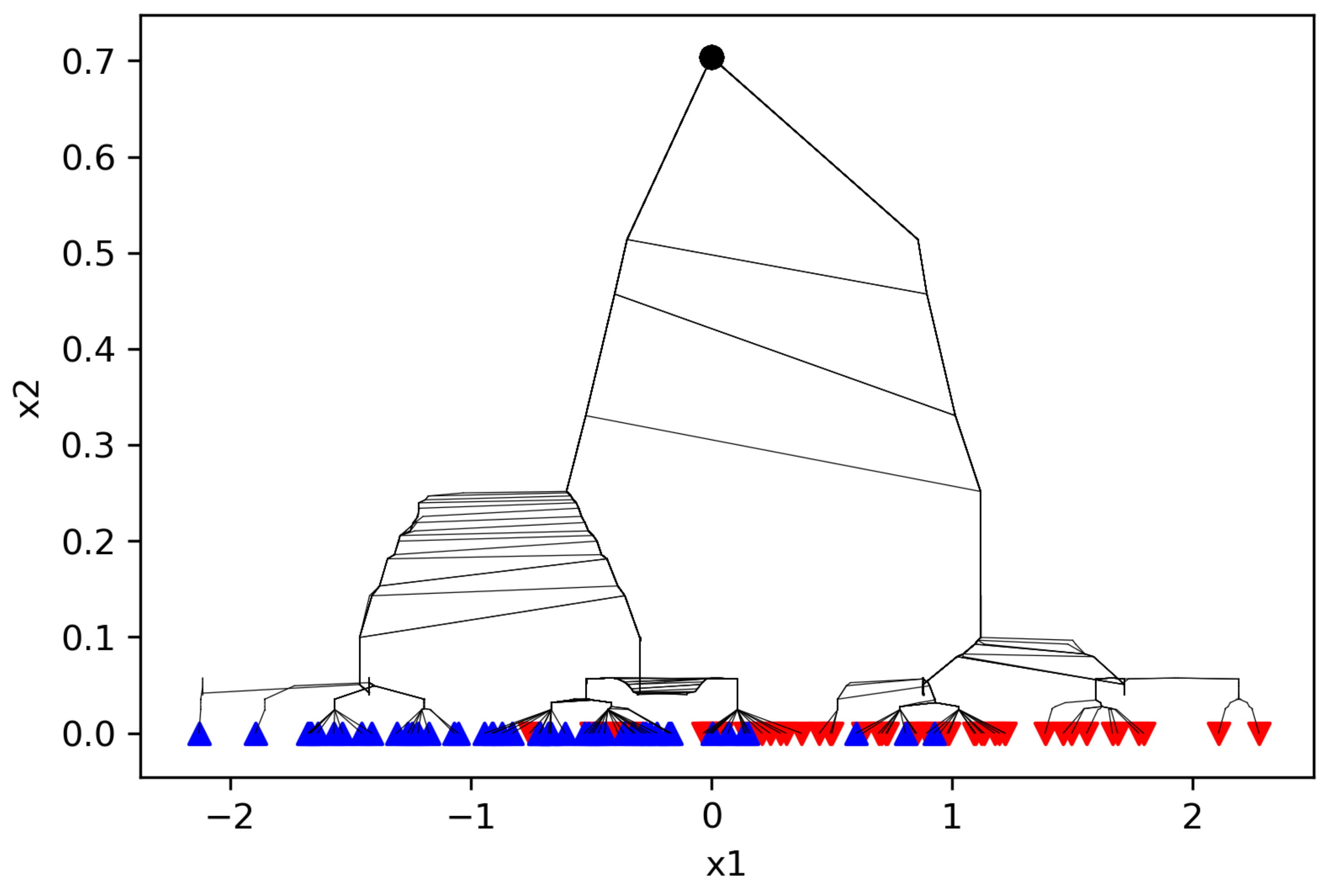}
   \end{minipage} \\ 
(e) NTL & (f) NTL \\
\hline 
   \begin{minipage}[b]{0.45\linewidth}
    \centering
    \vspace{1pt}
    \includegraphics[width=1.0\columnwidth]{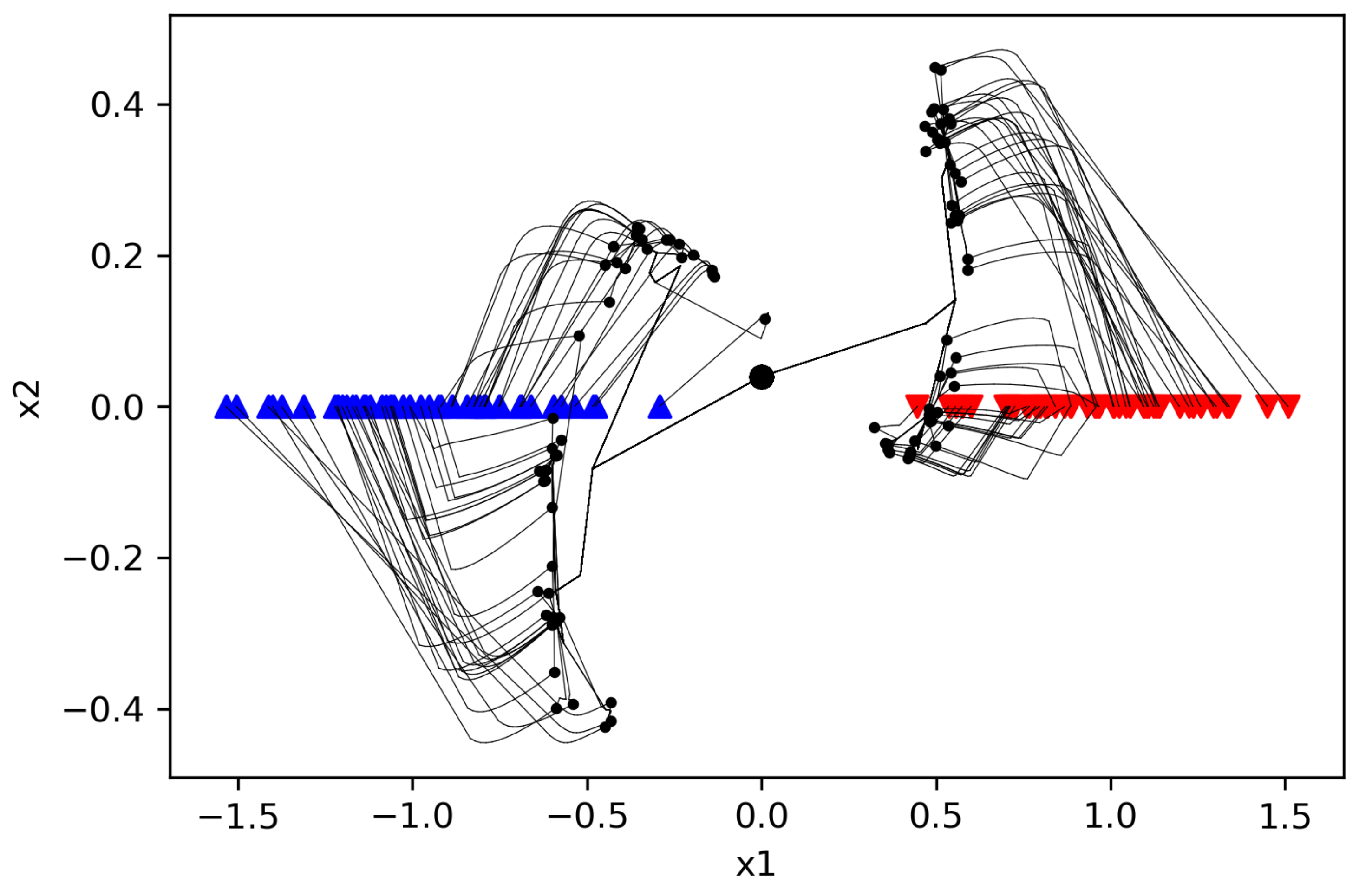}
   \end{minipage} &
   \begin{minipage}[b]{0.45\linewidth}
    \centering
    \vspace{1pt}
    \includegraphics[width=1.0\columnwidth]{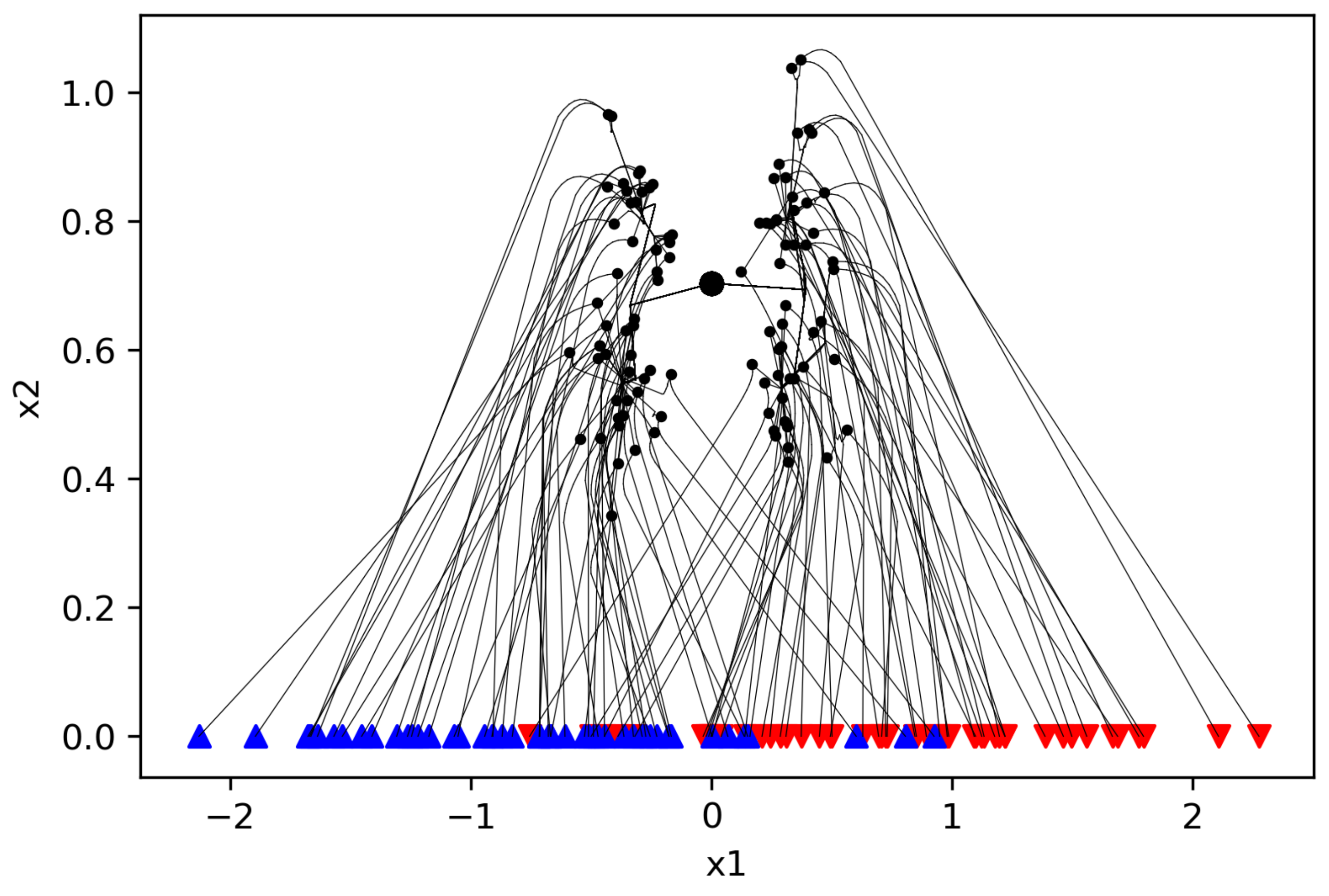}
   \end{minipage} \\ 
(g) NL + NTL & (h) NL + NTL \\
\hline
  \end{tabular}
  \caption{Datasets and cluster paths of centroids for simple regression.}
  \label{visual2}
  \end{center}
 \end{figure}

 \subsection{Ordinary clustering problem} 
This subsection compares \eqref{dcclustering} in Example \ref{dcc} with Convex Clustering (CC for short). 
Namely, we set $f_i(x_i)=\frac{1}{2}\|x_i-a_i\|_2^2$ and $\mathcal{E}=\left\{\{i,j\}\mid i\neq j, i,j\in\mathcal{V}\right\}$ in NL \eqref{nlasso} and NTL \eqref{dcnl}.
As mentioned in Section 1, in this case, we have access to prior information.

Firstly, we consider half moons dataset.
The used dataset ($n=100, p=2$) is shown in the upper left corner of Figure \ref{visual3}, where given (true) cluster labels of the data points are indicated by different colors (red versus blue).
As for the weights for CC, we consider two cases: (i) uniform weights, $w_{\{i,j\}}=1$ for all $\{i,j\}\in\mathcal{E}$, and (ii) non-uniform weights.
In order to define non-uniform weights for case (ii), let us denote the $k$-nearest neighbors of a point $i\in\mathcal{V}$ by
\begin{align}
\mathrm{NN}(i, k):=\{j\in\mathcal{V}\mid a_j\ \mathrm{is\ one\ of}\ k\ \mathrm{nearest\ neighbors\ of}\ a_i\}.
\end{align}
With this, we define 
\begin{align}
w_{\{i,j\}}=
\left\{
\begin{array}{ll}
\exp(-0.5{\|a_i-a_j\|}_2^2), & \mbox{if }i\in \mathrm{NN}(j, 20)\ \mathrm{or}\ j\in \mathrm{NN}(i, 20),\\
0,                                  & \mbox{otherwise},
\end{array}
\right.
\end{align}
for $\{i,j\}\in\mathcal{E}$.

For NL, a cluster path of centroids for $\gamma\in\{10^{-3}\times2^{t-1}\}_{t=1}^{50}$ is computed with initial points $\bm{y}^0=0, x_i^0=a_i ~(i\in \mathcal{V})$.\footnote{When all centroids degenerate at a single point, the computation of the path was stopped.}
As for NTL, we used $\rho={10}^{4}$ and started from the same initial points, computing a cluster path of centroids for $K\in\{19900,19800,...,100,0\}$. From Corollary \ref{c.e.p.p.}, the penalty parameter $\gamma$ is set to be $\gamma=3n\max_i{\|a_i\|}_2\times1.001$.

We can see from Figure \ref{visual3} that the CC with uniform weight failed to form clusters until it degenerated to a single point.
On the other hand, the weighted CC and NTL succeeded in showing non-obvious clusters along the cluster paths.
Comparing with the two methods, NTL generates small clusters at the beginning of the cluster path, which is more informative than NL about the closeness of points.

 \begin{figure}[H]
  \begin{center}
  \begin{tabular}{cc}
   \begin{minipage}[t]{0.45\linewidth}
    \centering
    \includegraphics[width=1.0\columnwidth]{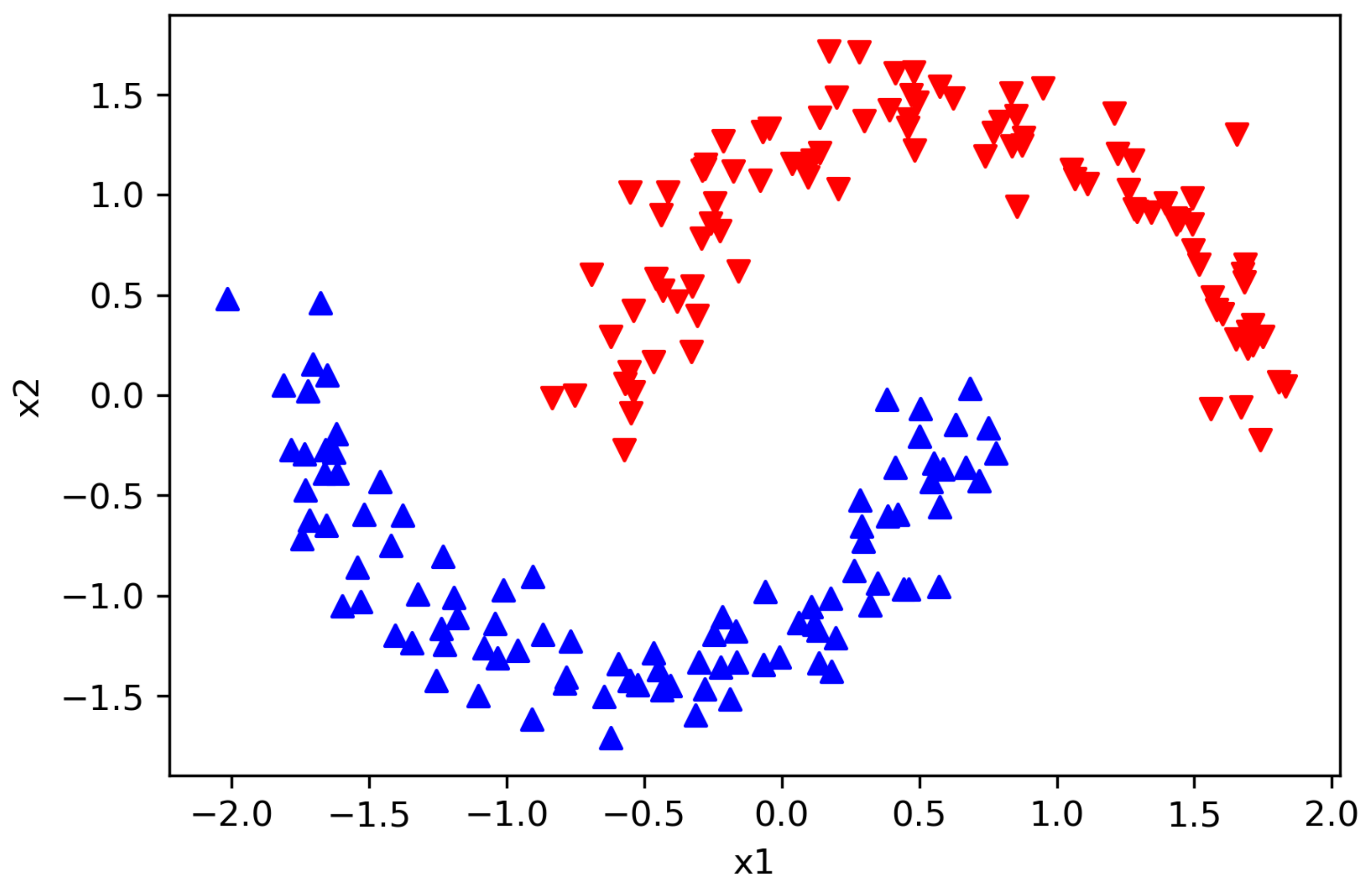}
   \end{minipage} &
   \begin{minipage}[t]{0.45\linewidth}
    \centering
    \includegraphics[width=1.0\columnwidth]{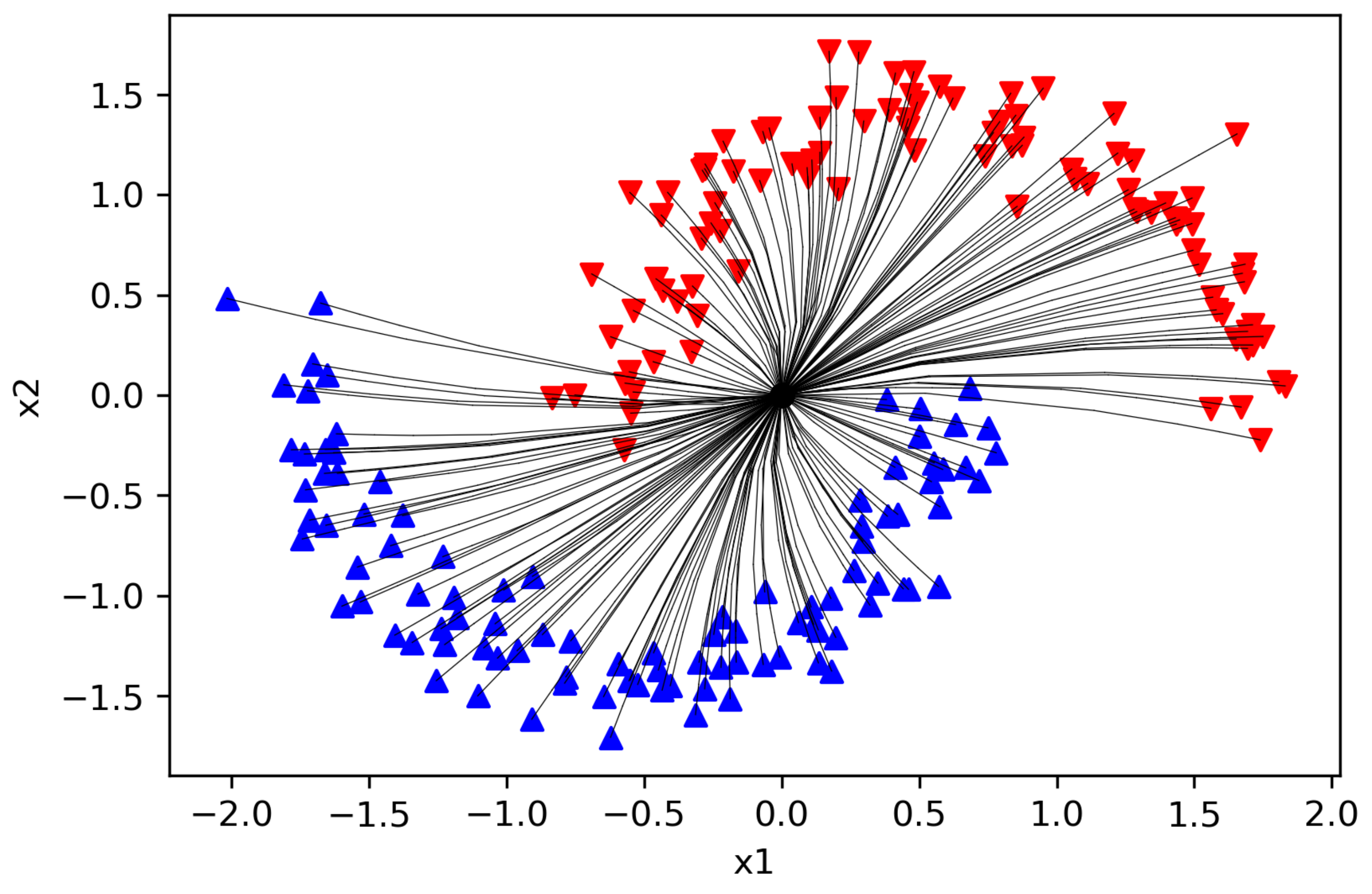}
   \end{minipage} \\
   (a) plot of dataset & (b) CC with uniform weight \\
   \begin{minipage}[t]{0.45\linewidth}
    \centering
    \includegraphics[width=1.0\columnwidth]{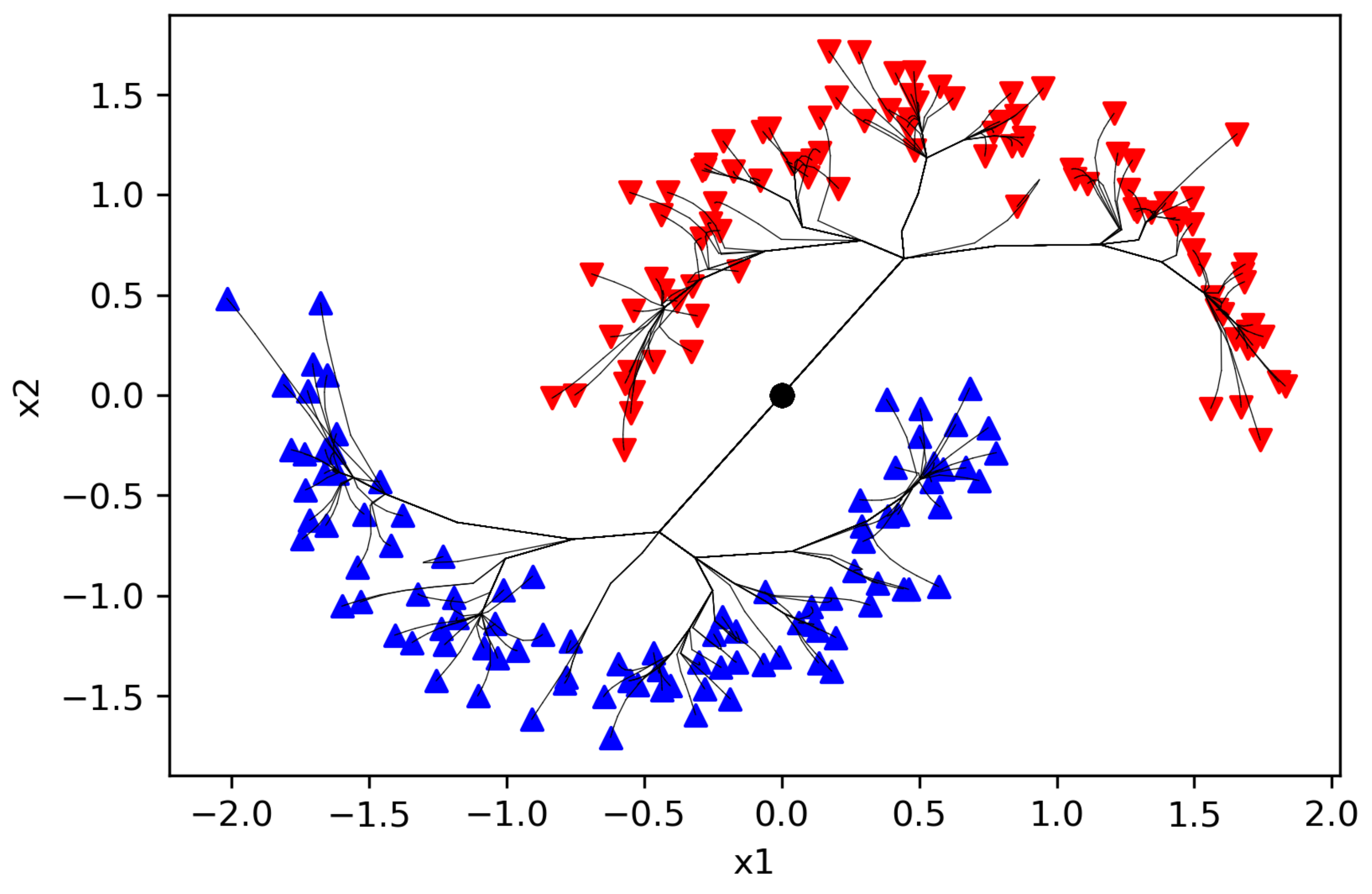}
   \end{minipage} &
   \begin{minipage}[t]{0.45\linewidth}
    \centering
    \includegraphics[width=1.0\columnwidth]{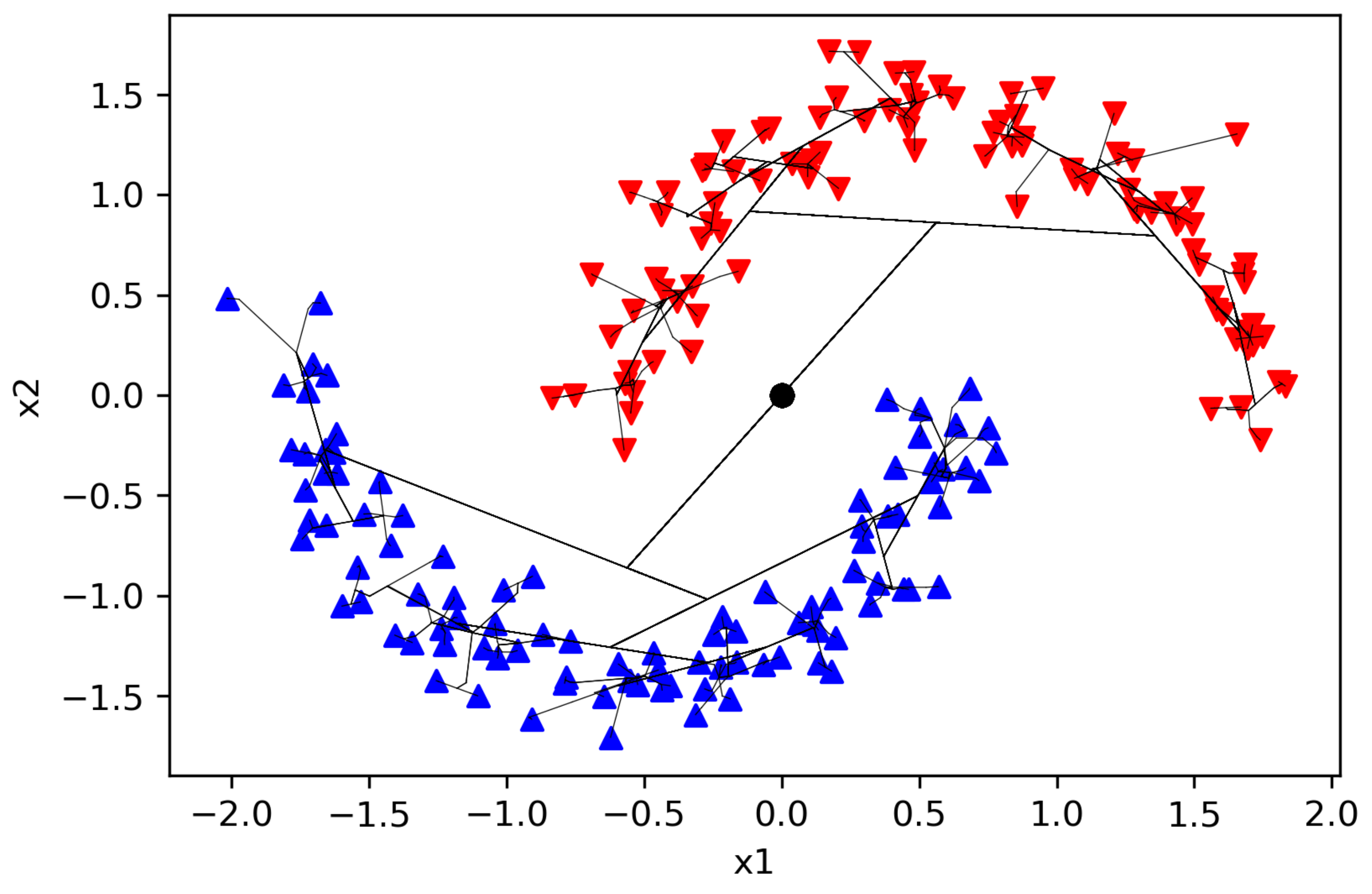}
   \end{minipage} \\
   (c) CC with Gaussian weight & (d) NTL \\
  \end{tabular}
  \caption{Datasets and cluster path of centroids for half moons.}
  \label{visual3}
  \end{center}
 \end{figure}

Next, using several real data sets \footnote{Datasets from scikit-learn https://scikit-learn.org/stable/datasets/index.html. The digit dataset was resampled so that $n=500,100,50$.}, we quantitatively compared the quality of clustering on the basis of Adjusted Rand Index \citep{lawrence1985comparing} (ARI for short; see, e.g., \citet[Section 2]{nguyen2010information} for the details). ARI takes a value between 0 and 1, and when it is closer to 1, the clustering performance is considered to be higher.

For weights for CC, we consider the following two cases:
\begin{align}
    w^1_{\{i,j\}}=
\left\{\begin{array}{ll}
\exp(-0.5{\|a_i-a_j\|}_2^2), & \mbox{if }i\in \mathrm{NN}(j, \left\lceil\frac{n}{2}\right\rceil) \mbox{ or } j\in \mathrm{NN}(i, \left\lceil\frac{n}{2}\right\rceil),\\
0,& \mbox{otherwise}
\end{array}\right.
\end{align}
and 
\begin{align}
w^2_{\{i,j\}}=
\left\{\begin{array}{ll}
\exp(-0.5{\|a_i-a_j\|}_2^2), & \mbox{if }i\in \mathrm{NN}(j, \left\lceil\frac{n}{10}\right\rceil)\ \mathrm{or}\ j\in \mathrm{NN}(i, \left\lceil\frac{n}{10}\right\rceil),\\
0,& \mbox{otherwise},
\end{array}\right.
\end{align}
where $\lceil l \rceil$ denotes the smallest integer no less than $l$. 
Note that $(w^2_{\{i,j\}})_{\{i,j\}\in\mathcal{E}}$ put more zeros on edges than $(w^1_{\{i,j\}})_{\{i,j\}\in\mathcal{E}}$.
Other settings are the same as in the previous (half-moon) example.

Table \ref{visual4} summarizes the largest values of ARI along the cluster paths.
We see from the table that weighted CC with $w^2$ performed best for three data sets, as Theorem \ref{thm:clusterrecover} implies.
On the other hand, NTL recorded the best performance with the two datasets. While it cannot be said that one is better than the other, it is clear from this experiment that if prior information is not available, CC results in poor performance.

 \begin{table}[H]
  \caption{Maximum Adjusted Rand Index through the cluster path.}
  \label{visual4}
  \begin{tabular}{|l||l|l|l|l|l|} \hline
   & iris & wine & digits ($n=500$) & digits ($n=100$) & digits ($n=50$) \\ \hline\hline
   CC (uniform)& 0.0015 & 0.0000 & 0.0082 & 0.0014 & 0.0013 \\ \hline
   CC ($w^1$) & 0.5681 & 0.7577 & 0.5302 & 0.4577 & 0.3812 \\ \hline
   CC ($w^2$) & 0.5681 & 0.7994 & \textbf{0.5346} & \textbf{0.5101} & \textbf{0.4443} \\ \hline
   NTL & \textbf{0.5778} & \textbf{0.8260} & 0.3967 & 0.4134 & 0.4207 \\ \hline
  \end{tabular}\\
\footnotesize 
The best value for each data set is shown in boldface. 
 \end{table}

\subsection{Piecewise constant fitting}
As the final example, we consider the problem of recovering a piecewise constant signal from a noisy signal \cite[Example 9.16]{calafiore2014optimization} by using NL and NTL.
Specifically, we consider a situation where $n=1000$ noisy signals $\hat{x}_1,...,\hat{x}_{1000}\in\mathbb{R}$ are generated as $\hat{x}_i=x_i^{\text{o}}+e_i$ with $x^{\rm o}$ being given as the original signal given as the black stepwise function in Figure~\ref{visual5} and $e_i$ being independently drawn from a normal distribution $N(0,{0.2}^2)$.
Given the time series structure, we set $\mathcal{V}=\{1,...,1000\}$, $\mathcal{E}=\left\{\{i,i+1\}\mid i,i+1\in\mathcal{V}\right\}$, $f_i(x_i)=\frac{1}{2}{(x_i-\hat{x}_i)}^2$, and
\begin{align}
D:=\left(
\begin{array}{ccccc}
1 & -1 & 0 & \cdots & 0\\
0 & 1 & -1 & \ddots & \vdots\\
\vdots & \ddots & \ddots & \ddots & 0\\
0 & \cdots & 0 & 1 & -1
\end{array}
\right).
\end{align}
It is known that $\sigma:=\lambda_{\min}(DD^\top)=2(1-\rm{cos}(\frac{\pi}{1000}))\approx 9.87\times 10^{-6}$ \citep[Theorem 2.2]{kulkarni1999eigenvalues}.
In this example, we consider not only the perspective of the cluster recovery but also the quality of the solution.
The quality of the solution here means the closeness of the recovered signal and the original signal, i.e., $\|\bm{x}^*-\bm{x}^\text{o}\|_2$.

For NL, we computed the cluster path of centroids for $\gamma\in\{10^{-3}\times(1.2)^{t-1}\}_{t=1}^{100}$ with the initial points $\bm{y}^0=0, x_i^0=\hat{x}_i$ and the prior information $w_{\{i,i+1\}}=%\mathrm{e}^{-0.5{\|\hat{x}_i-\hat{x}_{i+1}\|}_2^2}
\exp(-0.5{\|\hat{x}_i-\hat{x}_{i+1}\|}_2^2)$. 
As for NTL, we applied ADMM from the same initial point, using $K=5$ and $\gamma=3n\max_i{\|\hat{x}_i\|}_2\times1.001$. 
As an alternative heuristic for the cluster path algorithm, the initial value of $\rho$ was set to $1$ though Examples \ref{ex:ordinary} and \ref{ex:boundness} suggest setting as $\rho>\max\{\frac{2}{\sigma},\frac{1}{\sigma}\}=\frac{2}{\sigma}>2\times 10^5$. 
The parameter was updated by the formula $\rho\leftarrow\min\{10\rho,\frac{2}{0.99\sigma}\}$ every 100 iterations. 
This modification is motivated by a similar heuristic used in \citet{li2015global}.

Figure~\ref{visual5} shows how well NL and NTL recover the original signal, which is denoted by black solid line, from the noisy signal, which is shown by the red solid line in the upper left panel.
Since there is a degree of freedom in the evaluation criteria, two best-case results are given for NL. 
The panel (c) is the best in solution quality in the sense that the smallest value of $\|\bm{x}^*-\bm{x}^\text{o}\|_2$ was attained out of 100 values of $\gamma$. On the other hand, the panel (d) is the best in the cardinality in the sense that the employed $\gamma$ is the smallest out of the 100 values such that the number of jumped points is less than 5, which is the number of jumps in the original signal.
We see from the panel (d) of Figure~\ref{visual5} that NL detected the jump points almost exactly as Theorem~\ref{thm:clusterrecover} implies, but the levels of the piecewise constants are far from the original signal. 
We think this is due to the fact that the degree of each node is at most 2, so that prior information was not given enough to recover the signal by NL. 
Employing more zero-weights as prior information worked better in the experiment of the previous subsection, but this example indicates that that is not always true. 
This indicates that it is not easy to give weights adequately for NL in advance.
On the other hand, NTL not only detects the jump accurately but also estimates the levels of the piecewise constant more accurately than the best case of NL (lower left panel).

 \begin{figure}[H]
  \begin{center}
  \begin{tabular}{cc}
   \begin{minipage}[t]{0.45\linewidth}
    \centering
    \includegraphics[width=1.0\columnwidth]{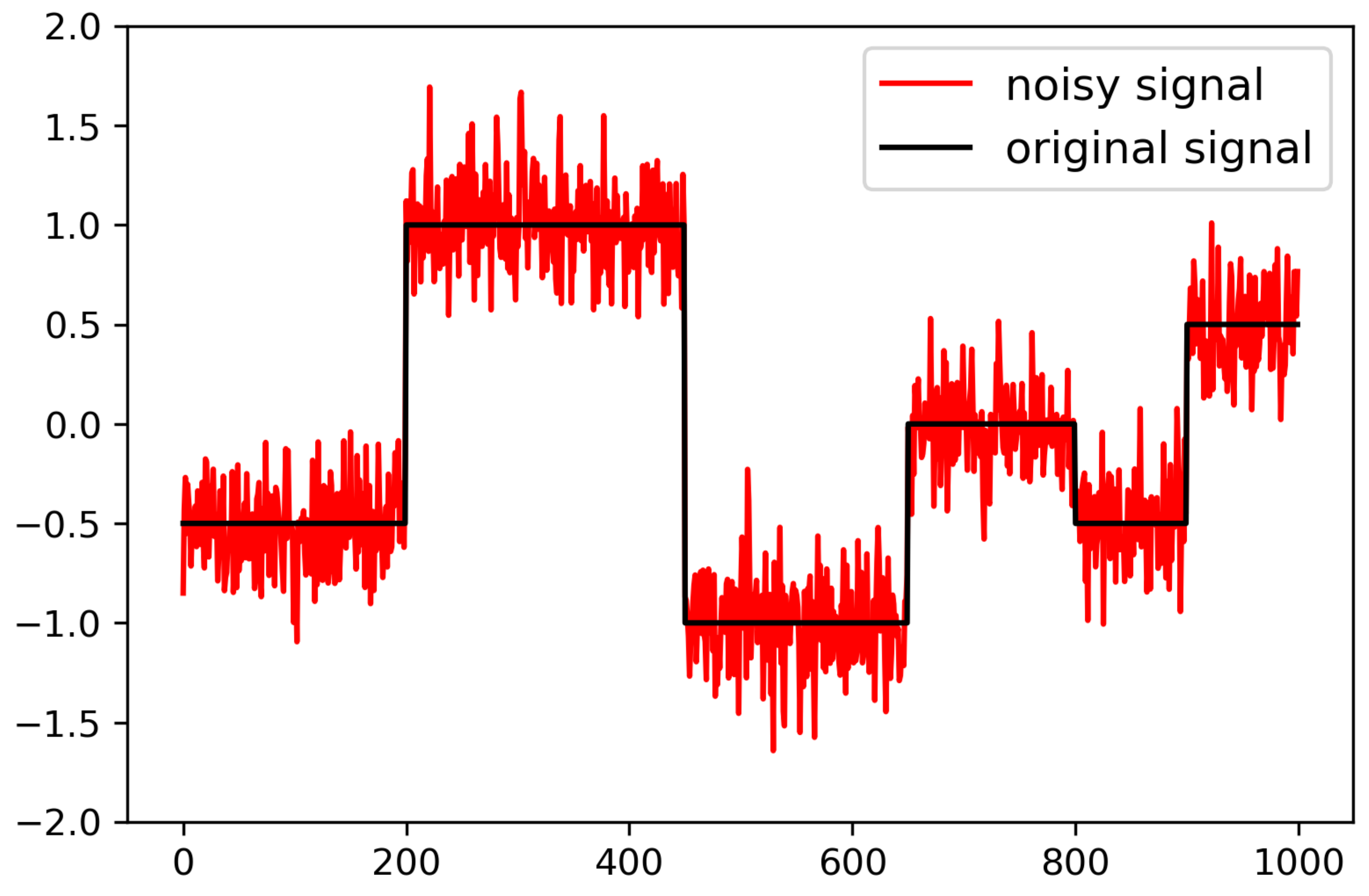}
   \end{minipage} &
   \begin{minipage}[t]{0.45\linewidth}
    \centering
    \includegraphics[width=1.0\columnwidth]{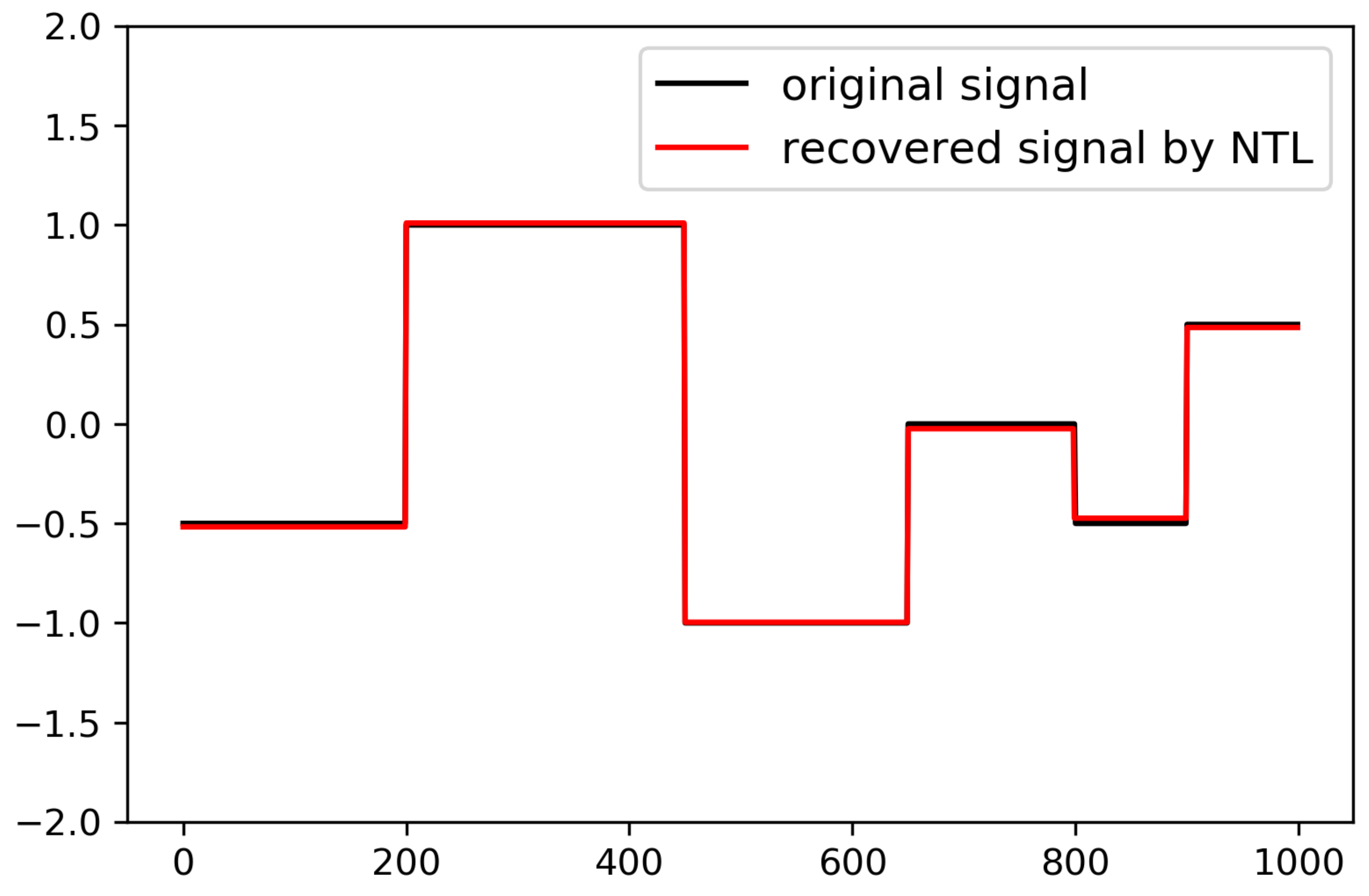}
   \end{minipage} \\
   (a) plot of dataset & (b) NTL \\
   \begin{minipage}[t]{0.45\linewidth}
    \centering
    \includegraphics[width=1.0\columnwidth]{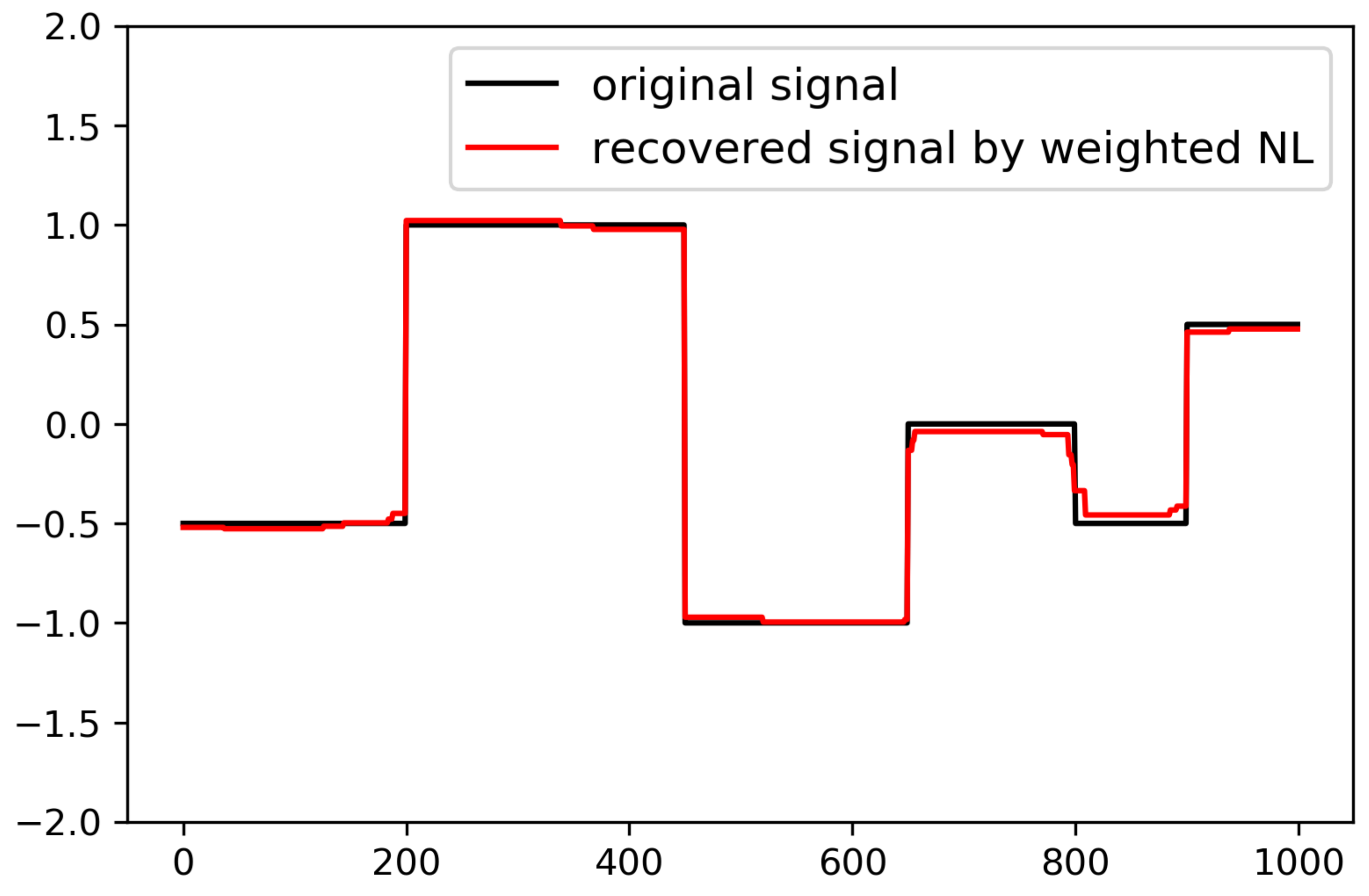}
   \end{minipage} &
   \begin{minipage}[t]{0.45\linewidth}
    \centering
    \includegraphics[width=1.0\columnwidth]{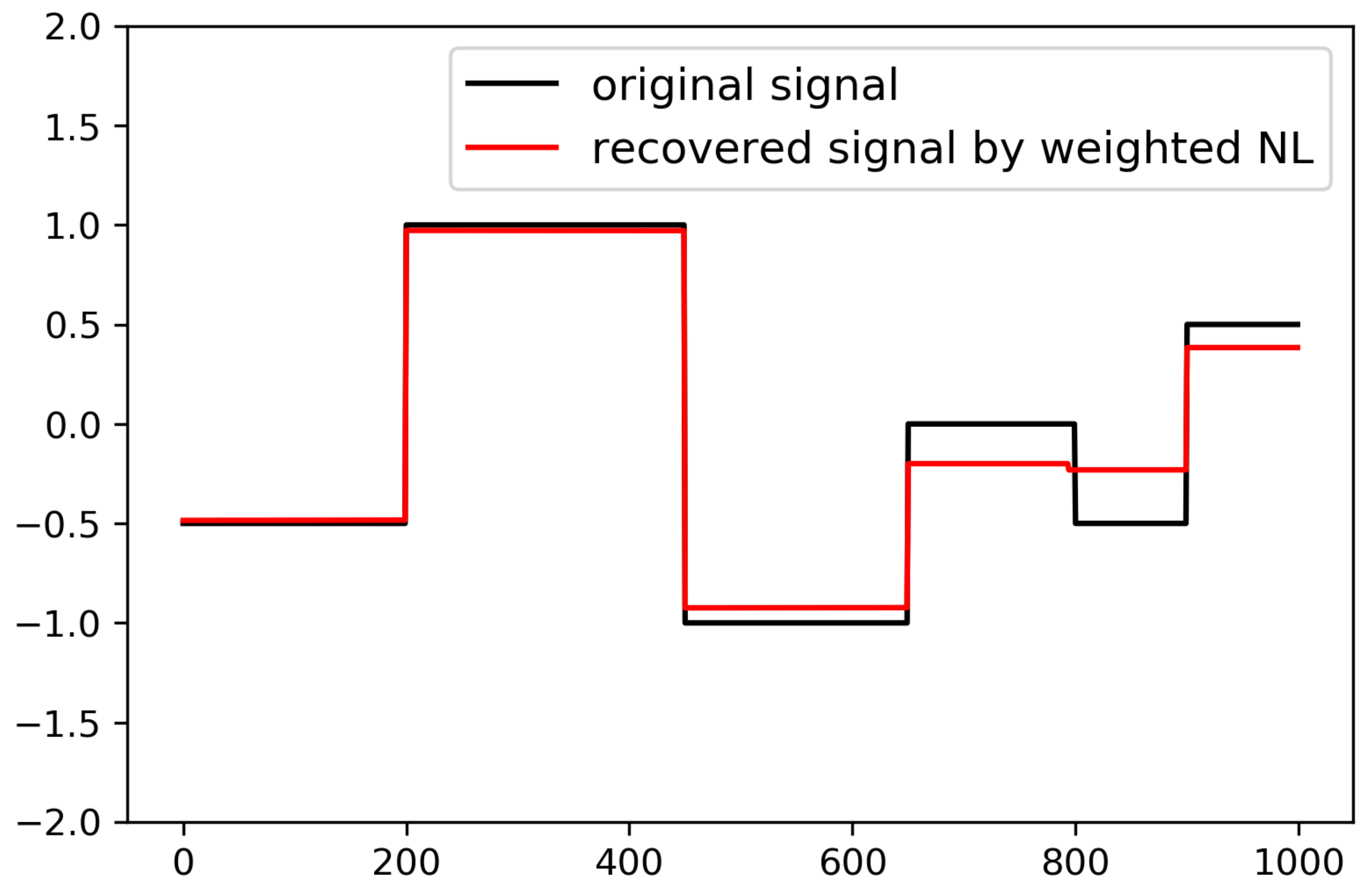}
   \end{minipage} \\
   (c) NL (best in terms of quality) & (d) NL (best in terms of cluster recovery) \\
  \end{tabular}
  \caption{Noisy signal and recovered signals.}
  \label{visual5}
  \end{center}
 \end{figure}

\section{Concluding remarks}
\label{sec:conclusion}
In this paper, we study the cluster structure of Network Lasso (NL) from a couple of different angles. 
First, we derive a condition under which NL can recover (unseen) true clusters. 
Second, to obtain clusters that might not be attained by NL, we consider a cardinality-constraint on the number of unmerged pairs of centroids and show an equivalent unconstrained reformulation called Network Trimmed Lasso (NTL). 
Numerical examples demonstrate how NTL performs better than the ordinary NL, especially when any prior information is not available.
These results suggest that we should use NL if we are given sufficient prior information, and use NTL otherwise.
We also show the convergence of ADMM to a locally optimal solution of NTL or  the cardinality-constrained problem.
However, when the underlying graph is dense and large, the employed algorithm based on ADMM would lead to an impractical solution time. For example, when the graph is a complete graph, i.e., $\mathcal{E}=\left\{\{i,j\}\mid i\neq j, i,j\in\mathcal{V}\right\}$, ADMM has to deal with $\frac{n(n-1)}{2}p$-dimension vectors, which would be prohibitively large even for a moderate $n$, say, $n=1000$. 
Developing an efficient algorithm for such large data sets is left for future research.

\section*{Acknowledgments}
Jun-ya Gotoh is supported in part by JSPS KAKENHI Grant 19H02379, 19H00808, and 20H00285.

\bibliography{reference.bib}
\bibliographystyle{plainnat}

\end{document}